\documentclass[12pt]{article}
\pdfoutput=1

\usepackage[a4paper, margin=1.0in]{geometry}
\usepackage{color,enumerate,graphicx,tikz}
\usepackage{multicol}
\usepackage[latin1]{inputenc}
\usepackage{amsmath,amssymb,mathtools,amsthm}
\usepackage{bm}
\usepackage{style,dsfont} 
\usepackage{algpseudocode,algorithm,algorithmicx} 
\usepackage{graphicx}
\usepackage{caption}
\usepackage[font=footnotesize,position=b]{subcaption}

\numberwithin{equation}{section}
\setcounter{tocdepth}{2}

\title{A tumor growth model of Hele-Shaw type\\ as a gradient flow}
\author{
Simone Di Marino \thanks{Lab. des Math., Universit\'e Paris-Sud, Orsay. } \thanks{Istituto Nazionale di Alta Matematica, Sede SNS Pisa, email: \textsf{simone.dimarino@altamatematica.it}} ,  L\'ena\"{i}c Chizat \thanks{Universit\'e Paris-Dauphine, PSL Research University, CNRS, CEREMADE, 75016 Paris, France}
   }

\begin{document}

\maketitle

\abstract{
%
%
In this paper, we characterize a degenerate PDE as the gradient flow in the space of nonnegative measures endowed with an optimal \emph{transport-growth} metric. The PDE of concern, of Hele-Shaw type, was introduced by Perthame \emph{et}.\ \emph{al}.\ as a mechanical model for tumor growth and the  metric was introduced recently in several articles as the analogue of the Wasserstein metric for nonnegative measures. %
%
We show existence of solutions using \emph{minimizing movements} and show uniqueness of solutions on convex domains by proving the \emph{Evolutional Variational Inequality}. Our analysis does not require any regularity assumption on the initial condition. We also derive a numerical scheme based on the discretization of the gradient flow and the idea of entropic regularization. We assess the convergence of the scheme on explicit solutions.
%
In doing this analysis, we prove several new properties of the optimal transport-growth metric, which generally have a known counterpart for the Wasserstein metric.
}

\section{Introduction}

\subsection{Motivation}
Modeling tumor growth is a longstanding activity in applied mathematics that has become a valuable tool for understanding cancer developement.
%
At the macroscopic and continuous level, there are two main categories of models: the cell density models -- which describe the tumor as a density of cells which evolve in time -- and the free boundary models -- which describe the evolution of the domain conquered by the tumor by specifying the geometric motion of its boundary.
Perthame et al.~\cite{perthame2014hele,mellet2015hele} have exhibited connection between these two approaches: by taking the incompressible limit of a standard density model of growth/diffusion, one recovers a free boundary model of Hele-Shaw type.  

More precisely, they consider a monophasic density of cells $\rho(x,t)$ (with $x\in \R^{d}$ the space variable and $t\geq 0$ the time) whose motion is driven by a scalar pressure field $p(x,t)$ through Darcy's law and which grows according to the rate of growth which is modeled as a function of the pressure $\Phi(p(x,t))$ where $\Phi$ is continuously decreasing and null for $p$ greater than a so-called ``homeostatic'' pressure. The equation of evolution for $\rho$ is then
\begin{equation}\label{eq:mechanicalmodel}
\left\{
\begin{aligned}
    \partial_t \rho  - \nabla \cdot (\rho \nabla p) &= \Phi(p) \rho\, , \text{ for $t>0$}
\\
p &= \rho^m, \quad \text{with } m\geq1\, ,
\\
\rho(0,\cdot) &= \rho_0 \in L^1_+(\Omega)
\end{aligned}
\right.
\end{equation}
where the relation $p=\rho^{m}$ accounts for a slow-diffusive motion. For suitable initial conditions, they show that when $m$ tends to infinity -- the so-called \emph{stiff} or \emph{incompressible} or \emph{hard congestion} limit -- the sequence of solutions $(\rho^{m},p^{m})$ of \eqref{eq:mechanicalmodel} tends to a limit $(\rho^{\infty},p^{\infty})$ satisfying a system of the form \eqref{eq:mechanicalmodel} where the relation between $\rho$ and $p$ is replaced by the \emph{Hele-Shaw} graph constraint $p(1-\rho)=0$.

Our purpose is to study directly this stiff limit system from a novel mathematical viewpoint, focusing on the case of a rate of growth depending linearly on the pressure $\Phi(p)=4(\lambda-p)_{+}$, with a homeostatic pressure $\lambda>0$. In a nutshell, we show that the stiff limit system
\begin{equation}\label{eq:mainPDE}
\begin{cases}
    \partial_t \rho - \nabla \cdot (\rho \nabla p) = 4(\lambda - p)_{+}\rho 
\\
p(1-\rho)=0
\\
0 \leq \rho \leq 1
\\
\rho(0,\cdot) = \rho_0
\end{cases}
\end{equation}
characterizes gradient flows of the functional $G: \mathcal{M}_{+}(\Omega) \to \R \cup \{+\infty\}$ defined as, with $\mathcal{L}^d$ the Lebesgue measure on $\R^d$,
\begin{equation}\label{eq:functional}
G(\rho) = \begin{cases}
-\lambda \rho(\Omega) & \text{if $\rho \ll \mathcal{L}^d$ and $\tfrac{d \rho}{d \mathcal{L}^d}\leq 1$,} \\
+\infty & \text{otherwise}
\end{cases}
\end{equation}
in the space of nonnegative measures $\mathcal{M}_{+}(\Omega)$ endowed with a metric which accounts for the displacement of mass and the growth/shrinkage which is necessary to interpolate between two measures. 

This approach has the following advantages:
\begin{itemize}
\item on a qualitative level, it gives a simple interpretation of the Hele-Shaw tumor growth model. Namely,  \eqref{eq:mainPDE} describes \emph{the most efficient way for a tumor to gain mass under a maximum density constraint}, where efficience means small displacement and small rate of growth.
\item on a theoretical level, we show existence of solutions to \eqref{eq:mainPDE}  without regularity assumption on the initial condition (unlike \cite{mellet2015hele}) and uniqueness on compact convex domains. Also, our study showcases another application of the theory of gradient flows in metric spaces, beyond Wasserstein spaces.
\item on a numerical level, relying on recent advances on algorithms for unbalanced optimal transport problems \cite{chizat2016scaling}, the gradient flow approach allows for a simple numerical scheme for computing solutions to \eqref{eq:mainPDE}.
\end{itemize}

\subsection{Background and main result}
In order to make precise statements, let us define what is meant by \emph{gradient flow} and by \emph{transport-growth metric} in this article. In $\R^{d}$, the gradient flow of a function $G:\R^{d}\to \{+\infty\}$ is a continuous curve $x:\R_{+}\to \R^{d}$  which is solution to the Cauchy problem
\begin{equation}\label{eq:euclideanGF}
\left\{ \begin{aligned}
\tfrac{d}{dt}x(t) &= - \nabla G(x(t)), \text{ for $t>0$} \\
x(0) &= x_{0}\in \R^{d}\, .
\end{aligned} \right.
\end{equation}
However, in a (non-Riemannian) metric space $(X,d)$, the gradient $\nabla G$ of a functional $G:X\to \R \cup \{+\infty\}$ is not defined anymore. Yet, several extensions of the notion of gradient flows exist, relying on the variational structure of \eqref{eq:euclideanGF}, see \cite{ambrosio2008gradient} for a general theory. One approach is that of \emph{minimizing movements} introduced by De Giorgi, and originates from the discretization in time of \eqref{eq:euclideanGF} through the implicit Euler scheme : starting from $\tilde{x}^{\tau}_{0} = x_{0} \in X$ define a sequence of points $(\tilde{x}^{\tau}_{k})_{k\in \mathbb{N}}$ as follows
\begin{equation}\label{eq:metricEulerScheme}
\tilde{x}_{k+1} \in \argmin_{x\in X} \left\{ G(x) + \tfrac{1}{2\tau} d(x,\tilde{x}^{\tau}_{k})^{2} \right\}\, .
\end{equation}
By suitably interpolating this discrete sequence, and making the time step $\tau$ tend to $0$, we recover a curve which, in a Euclidean setting, is a solution to \eqref{eq:euclideanGF}. This leads to the following definition.
\begin{definition}[Uniform minimizing movements]\label{def:gradientflow}
Let $(X,d)$ be a metric space, $G:X\to \R\cup\{\infty\}$ be a functional and $x_{0}\in X$. A curve $x:\R_{+}\to X$ is a uniform minimizing movement if it is the pointwise limit as of a sequence of curves $x^{\tau_i}$ defined as $x^{\tau_i}(t) = \tilde{x}^{\tau_i}_{k} $ for $t\in [k\tau_i,(k+1)\tau_i [$ for some sequence generated by \eqref{eq:metricEulerScheme}, with $\tau_i \to 0$.
\end{definition}
When the metric space is the space of probability measures endowed with an optimal transport metric $(\mathcal{P}(\Omega),W_{2})$, this time discretization is known as the \emph{JKO scheme}. It is named after the authors of the seminal paper~\cite{jko} where it is used to recover (in particular) the heat equation by taking the uniform minimizing movement of the entropy functional. 
A more precise and more restrictive notion of gradient flow is given by the \emph{evolutional variational inequality} (EVI).

\begin{definition}[EVI gradient flow]
An absolutely continuous curve $(x(t))_{t\in [0,T]}$ in a metric space $(X,d)$ is said to be an $\mathrm{EVI}_\alpha$ (for $\alpha \in \R$) solution of gradient flow of $G:X\to \R \cup \{\infty\}$ if for all $y\in \dom G$ and a.e.\ $t\in {]0,T[}$ it holds
\[
\frac12 \frac{d}{dt}d(x(t),y)^2 \leq F(y) - F(x(t)) -\frac{\alpha}{2}d(x(t),y)^2.
\]
\end{definition}

This definition is in fact more restrictive because $\mathrm{EVI}_\alpha$ implies uniqueness, but also $\alpha$-convexity of the functional $F$.

In this article, we endow the space of nonnegative measures $\mathcal{M}_{+}(\Omega)$ on a domain $\Omega \subset \R^{d}$ with another metric structure which has been introduced recently by several teams~\cite{chizat2015interpolating,liero2015optimal,kondratyev2015new} and called ``Kantorovich-Fisher-Rao'' or  ``Wasserstein-Fisher-Rao'' or ``Hellinger-Kantorovich'' in these various works. Here we choose to simply use the notation $\dist$ and refer to it as the optimal transport-growth metric. The simplest way to understand this metric is probably through a Riemannian metric point of view: formally, its metric tensor is an inf-convolution between the tensor of the Wasserstein metric and the tensor of the Fisher-Rao metric. Indeed, the distance between two nonnegative measures $\mu$ and $\nu$ can be computed by finding an interpolating curve $[0,1] \ni t\mapsto \rho_{t} \in \mathcal{M}_{+}(\Omega)$, such that $\rho_{0} = \mu$ and $\rho_{1} = \nu$, of minimal length according to these metric tensors, i.e $\dist(\mu,\nu)$ is the square root of
\begin{multline}\label{eq:WFRgeodesic}
\min_{\rho,v,\alpha} \Big\{ \int_{0}^{1} (\Vert v_{t} \Vert^{2}_{L^{2}(\rho_{t})}+\tfrac14 \Vert \alpha_{t} \Vert^{2}_{L^{2}(\rho_{t})})dt \; : \; (v_t,\alpha_t) \in L^2(\rho_t)^d\times L^2(\rho_t), \\
\partial_{t}\rho_{t} = -\nabla \cdot (\rho_{t}v_{t}) + \alpha_{t} \rho_{t}   \text{ weakly}
 \text{ and } (\rho_{0},\rho_{1}) = (\mu,\nu) \Big\}
\end{multline}
and any optimal interpolation $\rho$ is a geodesic for this metric (some explicit geodesics are studied in \cite{liero2015geodesics}). Just as for the standard optimal transport problems, it is possible to formulate this metric in terms of optimal coupling problems \cite{chizat2015unbalanced,liero2015optimal}. We skip the derivation of those equivalences which are non-trivial and use, as a definition of $\dist$, the optimal entropy-transport problem~\cite{liero2015optimal}. This formulation involves the \emph{relative entropy} between two nonnegative measures (also known as Kullback-Leibler divergence) defined as
\begin{equation}\label{eqn:def_entropy}
\Ent{\nu}{\mu} \eqdef 
\begin{cases}
\int_{\Omega} (\sigma \log(\sigma) - \sigma + 1)d\mu  & \text{if $\nu\ll\mu$ and $\nu = \sigma \mu$} \\
+ \infty & \text{otherwise.} 
\end{cases}
\end{equation}
\begin{definition}[Optimal transport-growth metric]
Let $(\mu, \nu)\in \mathcal{M}_{+}(\Omega)^{2}$ be two measures on a domain $\Omega\subset \R^{d}$. The metric $\dist$ is defined as
\begin{equation}\label{eq:KFdef}
\dist(\mu_1,\mu_2) \eqdef \left\{ \min_{\gamma \in \mathcal{M}_+(\Omega^2)} \int_{\Omega\times \Omega} \clog(x,y) d \gamma+ \Ent{\gamma_1}{\mu_1} + \Ent{\gamma_2}{ \mu_2}  \right\}^{\frac12}
\end{equation}
where $\clog(x,y)\eqdef -\log \cos^2 (\min \{ |y-x|, \tfrac{\pi}{2} \})$ and $\gamma_1$ and $\gamma_2$ are the marginals of $\gamma$ on the first and second factors of the product space $\Omega\times \Omega$.
\end{definition}

\begin{proposition}[\cite{liero2015optimal}]\label{prop:metric properties}
If $\Omega$ is closed, then the space $(\mathcal{M}_+(\Omega),\dist)$ is a complete metric space.  Its topology is equivalent to the weak topology (in duality with continuous bounded functions).
\end{proposition}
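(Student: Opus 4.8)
The statement is due to \cite{liero2015optimal}; I would organise a self-contained proof as follows. \emph{Metric axioms.} First I would check $\dist$ is a genuine distance on $\mathcal{M}_+(\Omega)$. Finiteness follows from the competitor $\gamma=0$ in \eqref{eq:KFdef}, giving $\dist(\mu_1,\mu_2)^2\le\mu_1(\Omega)+\mu_2(\Omega)$ (as $\Ent{0}{\mu}=\mu(\Omega)$); symmetry is built in; $\dist(\mu,\mu)=0$ comes from the competitor $(\mathrm{id},\mathrm{id})_{\#}\mu$, carried by the diagonal (where $\clog$ vanishes) with both marginals $\mu$. Before the remaining axioms I would show the minimum in \eqref{eq:KFdef} is attained: the objective is weakly lower semicontinuous ($\clog$ nonnegative, bounded, continuous; relative entropy jointly weakly l.s.c.) and its sublevel sets are tight and mass-bounded --- mass-boundedness from the superlinearity of $s\mapsto s\log s-s+1$ via $\Ent{\gamma_i}{\mu_i}\ge a\log(a/\mu_i(\Omega))-a+\mu_i(\Omega)$ with $a=\gamma_i(\Omega)$, tightness of $\{\gamma_i\}$ from that of $\mu_i$ plus the Young bound $st\le(e^t-1)+(s\log s-s+1)$ --- so Prokhorov yields a minimiser. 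If $\dist(\mu,\nu)=0$, a minimiser $\gamma$ makes all three nonnegative terms vanish: $\Ent{\gamma_1}{\mu}=0$ forces $\gamma_1=\mu$, $\Ent{\gamma_2}{\nu}=0$ forces $\gamma_2=\nu$, and $\int\clog\,d\gamma=0$ forces $\gamma$ onto the diagonal (as $\clog(x,y)>0$ for $x\neq y$), hence $\mu=\nu$.

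\emph{Triangle inequality} --- the one delicate axiom, where I expect essentially all the work. I would derive it from the metric-cone reformulation of \cite{liero2015optimal}: identify $\dist(\mu,\nu)^2$ with $\min\{W_{\mathfrak{C}}(\alpha,\beta)^2:\mathfrak{h}\alpha=\mu,\ \mathfrak{h}\beta=\nu\}$, where $\mathfrak{C}=(\Omega\times[0,\infty))/(\Omega\times\{0\})$ is the metric cone over $(\Omega,|\cdot|\wedge\pi)$, $W_{\mathfrak{C}}$ the quadratic optimal transport cost on $\mathcal{M}(\mathfrak{C})$ (mass being free to sit at the apex), and $\mathfrak{h}\alpha=\mathsf{x}_{\#}(\mathsf{r}^2\alpha)$ the homogeneous projection. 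Granting this, the triangle inequality for $\dist$ descends from that of $W_{\mathfrak{C}}$ by gluing cone plans, the one extra point being that two cone lifts of the same measure on $\Omega$ are joined by a fibrewise dilation not increasing $W_{\mathfrak{C}}$. Closer to \eqref{eq:KFdef} one may instead argue directly: disintegrate an optimal $\gamma$ for $(\mu_1,\mu_2)$ against $\gamma_2$ and an optimal $\eta$ for $(\mu_2,\mu_3)$ against $\eta_1$, glue them over $\mu_2$ with the common density $\sqrt{(d\gamma_2/d\mu_2)(d\eta_1/d\mu_2)}$ --- the geometric mean being exactly the minimiser of the one-point (Hellinger) problem --- push forward to the first and third coordinates, and bound the two entropy terms and, by Cauchy--Schwarz, the transport term. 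I would favour the cone route, as it also clarifies the geodesic picture \eqref{eq:WFRgeodesic}.

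\emph{Equivalence with the weak topology.} I would compare $\dist$ with weak convergence in both directions. If $\dist(\mu_n,\mu)\to0$, take optimal plans $\gamma^n$ (first marginal $\ll\mu_n$, second $\ll\mu$): all three terms of the cost tend to $0$, so a Csisz\'ar--Kullback--Pinsker inequality (the masses $\mu_n(\Omega)$ staying bounded since $\dist$ dominates $|\sqrt{\mu_n(\Omega)}-\sqrt{\mu(\Omega)}|$, see below) gives $\gamma^n_1\to\mu_n$ and $\gamma^n_2\to\mu$ in total variation, while $\int\clog\,d\gamma^n\to0$ with $\clog(x,y)\ge c\,|x-y|^2$ near the diagonal makes $\gamma^n$ put vanishing mass on $\{|x-y|\ge r\}$ for every $r>0$; decomposing $\int\phi\,d\mu_n-\int\phi\,d\mu$ into the two total-variation defects and $\int(\phi(x)-\phi(y))\,d\gamma^n$ then yields $\mu_n\rightharpoonup\mu$ for all $\phi\in C_b(\Omega)$. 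Conversely, if $\mu_n\rightharpoonup\mu$, fix $\varepsilon>0$, pick $\delta>0$ small, a compact $K$ with $\mu(\Omega\setminus K)<\varepsilon$, and a finite Borel partition $A_1,\dots,A_N$ of a compact $\hat{K}\supseteq K$ into $\mu$-continuity sets of diameter $<\delta$; the plan $\gamma^n=\sum_i\mu(\cdot\cap A_i)\otimes\mu_n(\cdot\cap A_i)/\mu(A_i)$ has second marginal $\mu_n$ restricted to $\hat{K}$ and first marginal of density $\mu_n(A_i)/\mu(A_i)\to1$ on $A_i$, so both entropy terms tend to at most $\mu(\Omega\setminus\hat{K})\le\varepsilon$, the transport term is at most $(-\log\cos^2\delta)\,\mu_n(\Omega)$, and $\limsup_n\dist(\mu_n,\mu)^2\le C\varepsilon$; since $\varepsilon$ is arbitrary, $\dist(\mu_n,\mu)\to0$.

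\emph{Completeness.} Let $(\mu_n)$ be $\dist$-Cauchy. Pushing any competitor forward to a single point kills the transport cost and does not increase the entropy terms, so $\dist(\mu,\nu)^2\ge\min_{t\ge0}\bigl(\Ent{t\delta_*}{\mu(\Omega)\delta_*}+\Ent{t\delta_*}{\nu(\Omega)\delta_*}\bigr)=(\sqrt{\mu(\Omega)}-\sqrt{\nu(\Omega)})^2$, so $\mu_n(\Omega)$ stays bounded ($\sqrt{\mu_n(\Omega)}$ is Cauchy in $\R$). Next $(\mu_n)$ is tight: given $\varepsilon>0$, pick $N$ with $\dist(\mu_n,\mu_N)<\varepsilon$ for $n\ge N$ and $K$ compact with $\mu_N(\Omega\setminus K)<\varepsilon$; for $n\ge N$ the mass of $\mu_n$ outside a fixed neighbourhood $K'$ of $K$ is controlled --- through an optimal plan with $\mu_N$ --- either by the small entropy defect (the total-variation bound again) or by the strictly positive value of $\clog$ on $(\Omega\setminus K')\times K$ against the $O(\varepsilon^2)$ transport cost, hence small uniformly in $n$. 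By Prokhorov a subsequence converges weakly to some $\mu\in\mathcal{M}_+(\Omega)$ (here $\Omega$ closed is used), so by the preceding paragraph $\dist(\mu_{n_k},\mu)\to0$, and being $\dist$-Cauchy the whole sequence $\dist$-converges to $\mu$. The only genuinely hard step is the triangle inequality; everything else is routine, if somewhat lengthy, compactness and entropy bookkeeping.
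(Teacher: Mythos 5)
The paper does not prove this proposition: it is stated with a citation to \cite{liero2015optimal} and no argument of its own, so there is nothing in the source to compare your sketch against line by line. What you have written is a correct and essentially faithful reconstruction of the strategy in that reference: you establish existence of minimisers in \eqref{eq:KFdef} by tightness and superlinearity of the entropy; you get $\dist(\mu,\nu)=0\Rightarrow\mu=\nu$ from strict positivity of $\clog$ off the diagonal; you obtain the triangle inequality from the cone lift (which is indeed the mechanism of \cite{liero2015optimal}, and you correctly flag the one nontrivial extra point, that cone lifts of the same base measure can be connected by a fibrewise dilation of zero cost) or alternatively by the explicit gluing over the middle marginal using the geometric-mean density, which is also the form the argument takes there; you get topological equivalence in both directions by the Pinsker-type bound and the block-partition competitor plan; and completeness by the mass bound $\dist(\mu,\nu)\geq|\sqrt{\mu(\Omega)}-\sqrt{\nu(\Omega)}|$ (this is precisely Lemma~\ref{lem:mass} in the appendix), a tightness argument exploiting both the entropy defect and the positivity of $\clog$ away from the diagonal, and Prokhorov. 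Two small points worth making explicit if this were written out: the block partition $\{A_i\}$ must be chosen so that $\hat K=\bigcup A_i$ is a $\mu$-continuity set in order to pass $\mu_n(\Omega\setminus\hat K)\to\mu(\Omega\setminus\hat K)$ to the limit, and the Csisz\'ar--Kullback--Pinsker inequality as usually stated is for probability measures, so one should spell out the unnormalised version (e.g.\ via $E(s)\geq(\sqrt{s}-1)^2$ and Cauchy--Schwarz against $(\sqrt{\sigma}+1)^2$) and note that the prefactor involves the bounded masses. Neither affects the soundness of the argument.
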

The main result of this article, proved in Section \ref{sec:proof}, makes a link between the tumor growth model \eqref{eq:mainPDE}, the metric $\dist$ and the functional \eqref{eq:functional}. Note that we assume a definition of the distance $\dist$ that is based on the Euclidean metric on $\R^d$ and not the geodesic distance of $\Omega$, which differ when $\Omega$ is not convex.

\begin{maintheorem}\label{maintheorem}
Let $\Omega$ be an open bounded $H^1$-extension domain of $\R^{d}$ and $\rho_{0} \in L^1_+(\Omega)$ such that $\rho_0\leq 1$ and $T>0$. Then any minimizing movement with $G$ as in \eqref{eq:functional}, is a solution of \eqref{eq:mainPDE} on $[0,T]$ starting from $\rho_0$, with some $p\in L^2([0,T],H^1(\Omega))$. Moreover if $\Omega$ is convex we have that every solution of \eqref{eq:mainPDE} is an $\mathrm{EVI}_{(-2\lambda)}$ solution of gradient flow of $G$ in the metric space $(\mathcal{M}_+(\Omega), \dist)$. In particular in this case we have uniqueness for \eqref{eq:mainPDE}.
\end{maintheorem}

The existence result is stated in Proposition \ref{prop:gradientflowexistence} and the EVI characterization, with uniqueness, in Proposition \ref{prop:uniqueness}.

%

\begin{remark}
The concept of solutions to the system \eqref{eq:mainPDE} is understood in the weak sense, i.e.\ we say that the family of triplets $(\rho_t,v_t,g_t)_{t\geq0}$ is a solution to
$$\partial_t \rho_t -\nabla \cdot (\rho_t v_t) = g_t \rho_t$$
if for all $\phi\in C^{\infty}_c(\bar{\Omega})$, the function $t\mapsto \int_\Omega \phi(x) d\rho_t(x)$ is well defined, absolutely continuous on $[0,+\infty[$ and for a.e. $t\geq0$ we have
\[
\frac{d}{dt} \int_\Omega \phi d\rho_t = \int_\Omega (\nabla \phi \cdot v_t + \phi g_t)d\rho_t\, .
\]
This property implies that $t\mapsto \rho_t$ is weakly continuous, that the PDE is satisfied in the distributional sense, and imposes no-flux (a.k.a.\ Neumann) boundary conditions for $v_t$. Equation \eqref{eq:mainPDE} is a specialization of this equation with $v_t= -\nabla p_t$ and $g_t=4(\lambda-p_t)_+$.
\end{remark}

\subsection{Short informal derivation}
Before proving the result rigorously, let us present an informal discussion, inspired by \cite{santambrogio2017euclidean}, in order to grasp the intuition behind the result.
%
Stuying the optimality conditions in the dynamic formulation of $\widehat W_2$, one sees that the velocity and the growth fields are derived from a dual potential (proofs of this fact can be found in \cite{kondratyev2015new, liero2015optimal}) as $(v_t,g_t)=(\nabla \phi_t, 4 \phi_t)$ and one has 
\[
\widehat W_2^2(\mu,\nu) = \inf_{(\phi_t)_t} \left\{ \int_0^1 \int_\Omega (| \nabla \phi_t |^2 + 4 |\phi_t |^2) d \rho_t d t\;;\; \partial_t \rho_t = - \nabla \cdot (\nabla \phi_t \rho_t) + 4\phi_t \rho_t\right\}
\]
where $(\rho_t)_{t\in [0,1]}$ is a path that interpolates between $\mu$ and $\nu$. This suggests to interpret $\widehat W_2$ as a Riemannian metric with tangent vectors at a point $\rho\in \mathcal{M}_+(\Omega)$ of the form $\partial_t \rho = - \nabla \cdot (\nabla \phi \rho) + 4\phi\rho$ and the metric tensor 
\[
\langle \partial_t \rho_1,\partial_t \rho_2\rangle_\rho= \int_\Omega (\nabla \phi_1 \cdot \nabla \phi_2 + 4 \phi_1\cdot \phi_2) d \rho.
\]

Now consider a smooth functional by $F: \mathcal{M}_+(\Omega) \to \R$ and denote $F'$ the unique function such that $\frac{d}{d\epsilon} F (\rho + \epsilon \chi)\vert_{\epsilon = 0} = \int_{\Omega} F'(\rho) d \chi$ for all admissible perturbations $\chi\in \mathcal{M}(\Omega)$. Its gradient at a point $\rho$ satisfies for a tangent vector $\partial_t \rho = - \nabla \cdot ( \rho \nabla \phi) + 4 \phi \rho$, by integration by part,
\[
\langle \grad_\rho F, \partial_t \rho \rangle_\rho = \int_\Omega F'(\rho)\partial_t \rho 
= \int_\Omega (\nabla F'(\rho) \cdot \nabla \phi + 4 F'(\rho) \cdot \phi) \rho
\]
which shows that, by identification, one has
\begin{equation*}\label{eq:gradient flow smooth formal}
\grad_\rho F = - \nabla \cdot (\rho \nabla F'(\rho)) + 4 \rho F'(\rho).
\end{equation*}
Note that this formula shows that there is a strong relationship between the diffusion and the reaction terms for $\widehat{W}_2$-gradient flows.

Now consider the functional $F_{m}(\rho)=-\lambda \rho(\Omega)+\frac{1}{m+1}\rho^{m+1}$ ($\rho$ is identified with its Lebesgue density). The associated gradient flow is the diffusion-reaction system~\eqref{eq:mechanicalmodel} because $F'_m(\rho) = -\lambda +\rho^{m}$.  The functional $G$ introduced in \eqref{eq:functional} can be understood as the stiff limit as $m\to \infty$ of the sequence of functionals $F_m$. Theorem~\ref{maintheorem} expresses that the gradient flow structure is preserved in the limit $m\to \infty$ where one recovers the hard congestion model \eqref{eq:mainPDE}. The proof we propose follows however a different approach, and directly starts with the hard congestion model.



\subsection{Related work}
In the context of Wasserstein gradient flows, free boundary models have already been modeled in \cite{otto1998dynamics,giacomelli2001variatonal} where a thin plate model of Hele-Shaw type is recovered by minimizing the interface energy. More recently, crowd motions have been modeled with these tools in~\cite{maury2010macroscopic,maury2011handling} in a series of works pioneering the study of Wasserstein gradient flows with a hard congestion constraint.
The success of Wasserstein gradient flows in the field of PDEs has naturally led to generalizations of optimal transport metrics in order to deal with a wider class of evolution PDEs, such as the heat flow with Dirichlet boundary conditions \cite{figalli2010new}, and diffusion-reaction systems~\cite{liero2013gradient}. The specific metric $\dist$, has recently been used to study population dynamics~\cite{kondratyev2016fitness} and gradient flows structure for more generic smooth functionals have been explored in \cite{gallouet2016jko} where the author consider splitting strategy, i.e.\ they deal with the transport and the growth term in a alternative, desynchronized manner.

Our work was pursued simultaneously and independantly of  \cite{gallouet2017unbalanced} where this very class of tumor growth model are studied using tools from optimal transport. These two works use different approaches and are complementary: our focus is on the stiff models \eqref{eq:mechanicalmodel} and we directly study the incompressible system with specific tools while \cite{gallouet2017unbalanced} focuses primary on the diffusive models \eqref{eq:mechanicalmodel}, and recover stiff system by taking a double limit. Their approach is thus not directly based on a gradient flow, but is more flexible and allows to deal with nutrient systems.

\subsection{Organization of the paper}

In Section \ref{sec:proof}, we give the proof of Theorem~\ref{maintheorem}, which involves a number of preliminary results about entropy-transport problems and the metric $\dist$. In Section \ref{sec:numericalscheme}, we introduce a numerical scheme for solving \eqref{eq:mainPDE} based on the discretization of the gradient flow. We derive in Section~\ref{sec:sphere} the explicit solution for spherical initial condition, which allows to evaluate the precision of the numerical scheme. We conclude in Section~\ref{sec:illustrations} with numerical illustrations on 1-D and 2-D domains.

\subsection{Aknowledgements}

We want to thank the ANR project Mokaplan, whose seminars have been inspiration for this work, as well as FSMP, that gave the possibility to the second named author to pass a period in Paris which has been helpful in the redaction of the paper. Moreover we want to thank Giuseppe Savar\'e, Alexander Liero, Leonard Monsaingeont and Thomas Gallou\"{e}t for many useful discussions.

\section{Proof of the main result}
\label{sec:proof}
\subsection{Entropy-transport problems}

In this section we consider optimal entropy-transport problems associated to cost functions $c:\Omega^2\to \bar{\R}$, defined as
\begin{equation}\label{eqn:def_trent}
T_c(\mu_1, \mu_2) \eqdef \inf_{\gamma \in \mathcal{M}_+ ( \Omega \times \Omega )} \left\{  \int_{\R^{2d}} c(x,y)  \, d \gamma + \Ent {\gamma_1}{ \mu_1 } + \Ent {\gamma_2}{ \mu_2} \right\}
\end{equation} 
where $\gamma_1$ and $\gamma_2$ are the marginals of $\gamma$ on the factors of $\Omega \times \Omega$ and $\Entv$ is the relative entropy functional, defined in \eqref{eqn:def_entropy}. The main role is played by the cost 
\[
\clog(x,y) \eqdef -\log \cos^2(\min\{\vert y-x\vert,\pi/2\})
\]
for which one recovers the definition of $\dist$ in \eqref{eq:KFdef}. A family of Lipschitz costs $\cn$ approximating $\clog$ is also used. These costs are constructed from the following approximation argument for the function $f_\ell :  [ 0, \infty) \to [0, \infty]$ defined by
\begin{equation}\label{eq:functionfn}
f_\ell (t) \eqdef - \ln ( \cos^2 ( \min\{ t, \pi/2 \} ))\, .
\end{equation}
Its proof is postponed to the appendix.

\begin{lemma}\label{lem:f_n} The function $f_\ell$ is convex and satisfies $f_\ell'^2 = 4(e^{f_\ell}-1)$ in $[0, \pi/2)$. It can be approximated by an increasing sequence of strictly convex Lipschitz functions $f_n:[0, \infty) \to [0, \infty)$ such that
\begin{itemize}
\item[(i)] $0 \leq f_n \leq f_m \leq f$ for every $n \leq m$ and $f_n(t) \uparrow f (t)$ pointwise for every $t$; moreover $f_n(t)=f(t)$ for $t \in [0,1]$. 
\item[(ii)] For all $n$ we have that $f_n'^2 \leq 4(e^{f_n}-1)$ in $[0,\infty)$ and $ e^{f_n(t)}-1 \geq t^2$.
\end{itemize}
\end{lemma}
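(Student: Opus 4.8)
The plan is to treat the two assertions separately. The stated facts about $f\coloneqq f_\ell$ are a direct computation: on $[0,\pi/2)$ we have $f(t)=-2\ln\cos t$, hence $f'(t)=2\tan t$, $f''(t)=2\sec^2 t>0$ and $e^{f(t)}=\sec^2 t$, so $f$ is strictly convex there and $f'^2=4\tan^2 t=4(\sec^2 t-1)=4(e^f-1)$; since moreover $f(t)\to+\infty$ as $t\to\pi/2^-$ and $f\equiv+\infty$ on $[\pi/2,\infty)$, the extended-real-valued $f$ is convex and lower semicontinuous on $[0,\infty)$. The $t^2$ lower bound for $f$ itself is just $\tan^2 t\ge t^2$.

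For the approximating sequence, the only obstruction to Lipschitz-ness of $f$ is the blow-up of $f'$ near $\pi/2$, so I would leave $f$ unchanged on a large interval $[0,s_n]$ and splice a tamer tail beyond it. Choose $s_n\in[1,\pi/2)$ increasing with $s_n\uparrow\pi/2$ (so $f_n=f$ on $[0,1]$), set $f_n\coloneqq f$ on $[0,s_n]$, and on $[s_n,\infty)$ put $f_n\coloneqq f\circ\theta_n=-2\ln\cos\theta_n$ where $\theta_n$ solves the autonomous Cauchy problem $\theta_n'=\sec(s_n)\cos\theta_n$, $\theta_n(s_n)=s_n$ --- equivalently $f_n$ solves $f_n'=2\sec(s_n)\sqrt{1-e^{-f_n}}$ with $f_n(s_n)=f(s_n)$. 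One checks that $\theta_n$ stays in $[s_n,\pi/2)$, is strictly increasing, and, since $\theta_n''<0$ and $\theta_n'(s_n)=\sec(s_n)\cos s_n=1$, is strictly concave; hence $\theta_n(t)<t$ for $t>s_n$, which gives $f_n\le f$ everywhere (equality on $[0,s_n]$) because $f$ is increasing. Since $f_n'=2\sec(s_n)\sin\theta_n\le 2\sec s_n$ is bounded, $f_n$ is Lipschitz; since $f_n'$ is strictly increasing on $[s_n,\infty)$ (as $\theta_n$ is) and equals $f'$ on $[0,s_n)$ and matches $f'(s_n)=2\tan s_n$ at the junction, $f_n\in C^1$ with $f_n'$ strictly increasing on all of $[0,\infty)$, i.e.\ $f_n$ is strictly convex; and clearly $f_n\ge 0$.

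It remains to check the two inequalities of (ii) and the monotone convergence, all via comparison (no-crossing) arguments. Because $\theta_n(t)\ge s_n$ for $t\ge s_n$, on that interval $\sec(s_n)\le\sec\theta_n=e^{f_n/2}$, so $f_n'^2=4\sec^2(s_n)\sin^2\theta_n\le 4\sec^2(\theta_n)\sin^2\theta_n=4\tan^2\theta_n=4(e^{f_n}-1)$, while on $[0,s_n]$ equality holds. For $e^{f_n(t)}-1=\tan^2\theta_n(t)\ge t^2$ it suffices that $\theta_n(t)\ge\arctan t$, which holds on $[0,s_n]$ and, by a first-contact argument, on $[s_n,\infty)$: at a putative contact point $t_0>s_n$ one would need $\theta_n'(t_0)\le(1+t_0^2)^{-1}$, whereas $\theta_n'(t_0)=\sec(s_n)\cos(\arctan t_0)=\sec(s_n)(1+t_0^2)^{-1/2}\ge(1+t_0^2)^{-1/2}>(1+t_0^2)^{-1}$. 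For $f_n\le f_{n+1}$: on $[0,s_{n+1}]$, $f_n\le f=f_{n+1}$; on $[s_{n+1},\infty)$ one compares the equations for $\theta_n$ and $\theta_{n+1}$, whose right-hand sides satisfy $\sec(s_n)\cos y\le\sec(s_{n+1})\cos y$ and whose initial data obey $\theta_n(s_{n+1})<s_{n+1}=\theta_{n+1}(s_{n+1})$, so the locally Lipschitz comparison principle yields $\theta_n\le\theta_{n+1}$. Finally $f_n(t)\uparrow f(t)$: for $t<\pi/2$ one has $f_n(t)=f(t)$ once $s_n>t$, and for $t\ge\pi/2$, $f_n(t)=f(\theta_n(t))\ge f(s_n)\to+\infty=f(t)$.

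The main obstacle --- and the reason this is a genuinely technical lemma --- is that the tail of $f_n$ must simultaneously be Lipschitz, strictly convex, $C^1$-glued to $f$, below $f$, and squeezed between the two differential constraints $f_n'^2\le 4(e^{f_n}-1)$ (which wants $f_n$ small) and $e^{f_n}-1\ge t^2$ (which wants $f_n$ large). The explicit choice $f_n'=2\sec(s_n)\sqrt{1-e^{-f_n}}$ is what reconciles all of these: it is autonomous with bounded, monotone right-hand side, matches $f$ to first order at $s_n$, and, because $\sec(s_n)\le e^{f_n/2}$ throughout the tail, never exceeds $2\sqrt{e^{f_n}-1}$; once this choice is made, every remaining step is either a monotonicity observation or a one-point comparison argument.
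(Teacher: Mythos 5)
Your proof is correct, and it takes a genuinely different route to the construction than the paper does. The paper prescribes the \emph{second} derivative: $f_n''=2e^f$ on $[0,t_n]$ and $f_n''(t)=e^{-t}$ beyond, with $t_n$ chosen so that $f(t_n)/t_n\ge\sqrt2$. Lipschitz-ness comes from $f_n'\le f'(t_n)+1$, monotonicity of the $f_n$ from monotonicity of the $f_n''$, and the inequality $f_n'^2\le 4(e^{f_n}-1)$ is proved by applying Cauchy's mean value theorem to $F=f_n'^2$ and $G=4(e^{f_n}-1)$ and observing that $F'/G'=f_n''/(2e^{f_n})\le e^{-t}/2\le 1$ beyond $t_n$; the bound $e^{f_n}-1\ge t^2$ is then a two-case convexity argument exploiting $f_n''\ge 2$ near the origin and the linear lower bound $f_n(t)\ge\sqrt2\,t$ for large $t$. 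You instead prescribe the \emph{first} derivative through the autonomous equation $f_n'=2\sec(s_n)\sqrt{1-e^{-f_n}}$, which amounts to the structurally natural ansatz $f_n=f\circ\theta_n$ with $\theta_n'=\sec(s_n)\cos\theta_n$. This buys you closed-form control: $f_n'=2\sec(s_n)\sin\theta_n$ is manifestly bounded, $C^1$-matching at $s_n$ is automatic since $\sec(s_n)\sin s_n=\tan s_n$, the constraint $f_n'^2\le 4(e^{f_n}-1)$ follows in one line from $\sec(s_n)\le\sec\theta_n$, and the remaining claims reduce to ODE comparison (monotonicity of $\theta_n$ in $n$) and a one-point first-crossing argument against $\arctan$. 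Your version is somewhat more explicit and reuses the differential structure of $f$ itself; the paper's is more generic (any decaying choice of $f_n''$ beyond $t_n$ with $f_n''\le 2e^{f_n}$ would do, with a mild tweak to the splice point). Both are sound; no gaps.
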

It follows from the definitions that $\clog(x,y)=f_\ell(|x-y|)$ and $\dist(\mu_1, \mu_2) = \sqrt{T_{\clog} ( \mu_1, \mu_2)}$. We also introduce the notation $\cn(x,y) = f_n( | x-y |)$.

The following characterization is the equivalent of the dual formulation of Kantorovich optimal transport in this setting and is proven in \cite[Thm. 4.14]{liero2015optimal} and corollaries.
\begin{theorem}\label{th:duality} Let us consider an $L$-Lipschitz cost $c \geq 0$. Then we have
$$ T_c(\mu_1, \mu_2)= \max_{\alpha, \beta \in {\rm Lip}_L(\Omega)} \left\{ \int (1- e^{-\alpha}) \, d \mu_1 + \int (1- e^{-\beta}) \, d \mu_2 \; : \; \alpha + \beta \leq c \right\}.$$
Here and in the following, the constraint has to be understood as $\alpha(x) +\beta (y) \leq c(x,y)$, for all $(x,y)\in \Omega^2$. Moreover, if $\gamma$ denotes a minimizer in the primal problem and $\alpha, \beta$ maximizers in the dual, we have the following compatibility conditions:
\begin{itemize}
\item[(i)]$\gamma_1 = e^{-\alpha} \mu_1$;
\item[(ii)] $ \gamma_2= e^{-\beta} \mu_2$;
\item[(iii)] $ \alpha(x)+ \beta(y) = c(x,y)$ for $\gamma$-a.e.\ $(x,y)$. In particular if $\mu_1$ is absolutely continuous with respect to the Lebesgue measure one has $ \nabla \alpha(x) = \partial_x c (x,y)$ for $\gamma$-a.e. $(x,y)$.
\item[(iv)] $T_c(\mu_1, \mu_2) = \mu_1(\R^d) + \mu_2(\R^d) - 2 \gamma(\R^d\times \R^d)$.
In particular we have that $\gamma_1$ and $\gamma_2$ are unique and $\alpha$ and $\beta$ are uniquely defined in the support of $\mu_1$ and $\mu_2$, respectively.

\end{itemize}
\end{theorem}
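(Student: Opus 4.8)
\noindent\emph{Proof proposal.} This identity is a Fenchel--Rockafellar duality between couplings and potentials; here is how I would run it. Assume $\mu_1,\mu_2$ have finite mass and, replacing $\Omega$ by $\bar\Omega$ if necessary, that $\Omega$ is compact, so that $\mathcal M(\Omega)$ and $\mathcal M(\Omega^2)$ are the duals of $C(\Omega)$ and $C(\Omega^2)$. The easy inequality comes from Young's inequality for the strictly convex superlinear function $e(s)=s\log s-s+1$ on $[0,\infty)$: one has $\sup_{s\geq0}\{st-e(s)\}=e^{t}-1$, attained only at $s=e^{t}$, hence $\Ent{\nu}{\mu}\geq\int\phi\,d\nu-\int(e^{\phi}-1)\,d\mu$ for every $\phi$, with equality iff $\nu=e^{\phi}\mu$. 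Applying this with $\phi=-\alpha$ and $\phi=-\beta$ to the marginals of any $\gamma\in\mathcal M_+(\Omega^2)$, and using $\int c\,d\gamma\geq\int(\alpha\oplus\beta)\,d\gamma=\int\alpha\,d\gamma_1+\int\beta\,d\gamma_2$ whenever $\alpha(x)+\beta(y)\leq c(x,y)$, the terms $\int\alpha\,d\gamma_1$ and $\int\beta\,d\gamma_2$ cancel and one gets
\[
\int c\,d\gamma+\Ent{\gamma_1}{\mu_1}+\Ent{\gamma_2}{\mu_2}\ \geq\ \int(1-e^{-\alpha})\,d\mu_1+\int(1-e^{-\beta})\,d\mu_2,
\]
so $T_c$ dominates the claimed dual value. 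Moreover, given any feasible continuous $(\alpha,\beta)$, its $c$-transform $\alpha^{c}(x)\eqdef\inf_y\{c(x,y)-\beta(y)\}$ is $L$-Lipschitz (an infimum of the $L$-Lipschitz maps $x\mapsto c(x,y)-\beta(y)$), satisfies $\alpha^{c}\geq\alpha$, $\alpha^{c}\oplus\beta\leq c$ and $(\alpha^{c})^{c}\geq\beta$; since $t\mapsto1-e^{-t}$ is nondecreasing, passing to $(\alpha^{c},(\alpha^{c})^{c})\in\mathrm{Lip}_L(\Omega)^2$ never decreases the objective, so it suffices to take the supremum over Lipschitz, or over all continuous, potentials.

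For the matching upper bound I would invoke the Fenchel--Rockafellar theorem with primal variables $(\alpha,\beta)\in C(\Omega)^2$, the linear map $A(\alpha,\beta)=\alpha\oplus\beta\in C(\Omega^2)$ (whose adjoint takes a measure to the pair of its marginals), $\Phi(\alpha,\beta)=\int(e^{-\alpha}-1)\,d\mu_1+\int(e^{-\beta}-1)\,d\mu_2$ (convex and continuous on $C(\Omega)^2$), and $\Psi(w)=0$ if $w\leq c$ everywhere, $+\infty$ otherwise (convex, lsc, and continuous at every $w$ lying uniformly below $c$). The constraint qualification holds at a constant pair $(s,s)$ with $2s<\min c$. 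After the sign bookkeeping, the relevant conjugates are exactly $\Ent{\gamma_1}{\mu_1}+\Ent{\gamma_2}{\mu_2}$ (this is the Legendre identity above, read the other way, which also forces $\gamma\geq0$) and $\int c\,d\gamma$, so duality yields that the supremum of the dual equals $T_c$ and that a primal minimizer $\gamma\in\mathcal M_+(\Omega^2)$ is attained. (Alternatively one can run a Sion minimax argument on a weakly compact sublevel set of the entropy, using the superlinearity of $e$ to control the total mass.) Existence of optimal potentials then follows by a compactness argument: along a maximizing sequence of feasible $L$-Lipschitz pairs, the two exponential terms forbid $\alpha$ from drifting to $-\infty$ on $\operatorname{supp}\mu_1$ (objective $\to-\infty$) and, through the constraint (which would force $\beta\to-\infty$ on $\operatorname{supp}\mu_2$), also from drifting to $+\infty$; combined with equi-Lipschitzness and boundedness of $\Omega$, the sequence is uniformly bounded, and Arzel\`a--Ascoli plus dominated convergence produce an optimal $(\alpha,\beta)\in\mathrm{Lip}_L(\Omega)^2$.

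With a primal minimizer $\gamma$ and dual maximizers $\alpha,\beta$ in hand, every inequality above is an equality, and the compatibility conditions fall out: $\int(c-\alpha\oplus\beta)\,d\gamma=0$ with nonnegative integrand gives $\alpha(x)+\beta(y)=c(x,y)$ $\gamma$-a.e., i.e.\ (iii); equality in the two Young inequalities gives $\gamma_1=e^{-\alpha}\mu_1$ and $\gamma_2=e^{-\beta}\mu_2$, i.e.\ (i)--(ii); integrating these and using $\gamma_1(\Omega)=\gamma_2(\Omega)=\gamma(\Omega^2)$ gives (iv); and from (iii) the map $x\mapsto\alpha(x)-c(x,y)$ is maximized at the first coordinate of $\gamma$-a.e.\ pair, so $\nabla\alpha(x)=\partial_x c(x,y)$ wherever $\alpha$ and $c(\cdot,y)$ are differentiable --- which covers $\gamma$-a.e.\ $(x,y)$ when $\mu_1\ll\mathcal L^d$, since then $\gamma_1\ll\mathcal L^d$ and Lipschitz functions are differentiable $\mathcal L^d$-a.e. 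Uniqueness of $\gamma_1,\gamma_2$ follows from strict convexity of $\nu\mapsto\Ent{\nu}{\mu_i}$ and linearity of the remaining terms in $\gamma$; uniqueness of $\alpha$ on $\operatorname{supp}\mu_1$ (resp.\ $\beta$ on $\operatorname{supp}\mu_2$) then follows from (i), as $\alpha=-\log(d\gamma_1/d\mu_1)$ is determined $\mu_1$-a.e.\ and continuous. The delicate point in all of this is the no-gap step: choosing the right functional-analytic framework and checking the qualification condition, together with the (routine but not free) fact that the Legendre representation of the entropy can be realized with continuous---hence, after the $c$-transform, Lipschitz---potentials, which uses the finiteness of $\mu_1,\mu_2$ and a truncation argument. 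Everything else is standard.
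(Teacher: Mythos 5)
The paper itself does not prove Theorem~\ref{th:duality}: it is quoted verbatim from \cite[Thm.~4.14]{liero2015optimal}, so there is no ``paper's proof'' to compare against. Assessed on its own, your sketch is essentially the standard and correct route, and it matches in spirit what Liero--Mielke--Savar\'e do (they work in greater generality --- Borel costs, non-compact Polish spaces, general entropy functionals --- and use a minimax/saddle-point argument rather than textbook Fenchel--Rockafellar, but the skeleton is the same): the easy inequality from the Young inequality $e^*(t)=e^t-1$ for $e(s)=s\log s-s+1$; reduction to $L$-Lipschitz potentials by the $c$-transform; a convex-duality theorem for the no-gap step; then reading off (i)--(iv) from the equality cases. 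That is all sound. A few places deserve one more sentence than you gave them. First, the conjugate in the Fenchel step is $\Phi^*(-\gamma_1,-\gamma_2)=\Ent{\gamma_1}{\mu_1}+\Ent{\gamma_2}{\mu_2}$, i.e.\ the minus sign coming out of $A^*$ is exactly what turns the exponential-of-$(-\alpha)$ conjugate into an entropy against a \emph{nonnegative} marginal; you flag this as ``bookkeeping'' but it is where the positivity constraint $\gamma\ge0$ really comes from (the other half of that constraint comes from $\Psi^*$). Second, your compactness argument for the existence of optimal $(\alpha,\beta)$ tacitly uses that both $\mu_1$ and $\mu_2$ have positive mass; when one of them vanishes the maximum is not attained (the supremum is approached by $\alpha\to+\infty$), a degenerate case LMS exclude. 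Third, the ``$\gamma$-a.e.'' differentiability step in (iii) uses Rademacher for $\alpha$, which is fine since $\gamma_1=e^{-\alpha}\mu_1\ll\mathcal L^d$, but you also need $c(\cdot,y)$ differentiable at $x$ for $\gamma$-a.e.\ $(x,y)$; for the costs used in this paper, $c(x,y)=f(|x-y|)$, this is immediate because $c$ is smooth off the diagonal, but for a general Lipschitz $c$ one must invoke a Fubini argument. None of these affect the correctness of the statement you are proving; they are details to fill if this sketch were promoted to a full proof.
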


Some stability properties follow, both in term of the measures and of the costs.

\begin{proposition}\label{prop:uniform} Let us consider an $L$-Lipschitz cost $c \geq 0$. Then if $\mu_{n,i} \weakto \mu_i$ for $i=1,2$ and all the measures are supported on a bounded domain $\Omega$ then, denoting by $\alpha_n, \beta_n$ the maximizers in the dual problem, we have that  $\alpha_n \to \alpha$ and $\beta_n \to \beta$ locally uniformly where $\beta$ and $\alpha$ are maximizers in the dual problem for $\mu_1$ and $\mu_2$. Moreover $T_c(\mu_{n,1} ,\mu_{n,2}) \to T_c(\mu_1, \mu_2)$.
\end{proposition}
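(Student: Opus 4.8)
The plan is to run a compactness argument on the dual potentials furnished by Theorem~\ref{th:duality}. Since $\Omega$ is bounded and $c\ge 0$ is $L$-Lipschitz, it is bounded, say $0\le c\le M$. For each $n$ I would choose the maximizers $\alpha_n,\beta_n$ in the dual problem to be a $c$-conjugate pair, i.e.\ $\alpha_n(x)=\inf_{y\in\Omega}(c(x,y)-\beta_n(y))$ and $\beta_n(y)=\inf_{x\in\Omega}(c(x,y)-\alpha_n(x))$; this costs nothing, since replacing a potential by its $c$-transform keeps the pair admissible and, because $t\mapsto 1-e^{-t}$ is nondecreasing, cannot decrease the objective, and two transforms stabilise. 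Such $\alpha_n,\beta_n$ lie in $\mathrm{Lip}_L(\Omega)$. The key a priori bound to establish is $-C\le \alpha_n,\beta_n\le C$ with $C$ independent of $n$: the two conjugacy relations and $0\le c\le M$ give $\mathrm{osc}(\alpha_n),\mathrm{osc}(\beta_n)\le M$ and $\sup\alpha_n+\sup\beta_n\le M$, while comparing the value of the dual functional at $(\alpha_n,\beta_n)$ with its value $0$ at $(0,0)$ gives $\sup\beta_n\ge -\log(1+\mu_{n,1}(\Omega)/\mu_{n,2}(\Omega))$ and, symmetrically, a lower bound on $\sup\alpha_n$; since $\mu_{n,i}(\Omega)\to\mu_i(\Omega)$ these masses are bounded and --- assuming $\mu_1,\mu_2\neq 0$, the degenerate cases being handled separately --- bounded away from $0$, so the bound follows.

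With uniform Lipschitz and sup bounds, Arzel\`a--Ascoli yields, along any subsequence, a further subsequence with $\alpha_n\to\alpha_\infty$ and $\beta_n\to\beta_\infty$ uniformly on $\overline\Omega$, the limits being $L$-Lipschitz, satisfying $\alpha_\infty+\beta_\infty\le c$ (the constraint passes to the pointwise limit) and again forming a $c$-conjugate pair. I would then pass to the limit in the dual functional: the functions $1-e^{-\alpha_n}$ and $1-e^{-\beta_n}$ are equibounded, continuous and converge uniformly on the bounded set $\Omega$, so pairing them with the weakly convergent $\mu_{n,i}\weakto\mu_i$ gives $\int(1-e^{-\alpha_n})\,d\mu_{n,1}+\int(1-e^{-\beta_n})\,d\mu_{n,2}\to\int(1-e^{-\alpha_\infty})\,d\mu_1+\int(1-e^{-\beta_\infty})\,d\mu_2$. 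The left side is $T_c(\mu_{n,1},\mu_{n,2})$, and by admissibility of $(\alpha_\infty,\beta_\infty)$ the right side is $\le T_c(\mu_1,\mu_2)$; since every subsequence has such a further subsequence, this gives $\limsup_n T_c(\mu_{n,1},\mu_{n,2})\le T_c(\mu_1,\mu_2)$. For the reverse inequality I would use a fixed maximizing pair $(\alpha,\beta)$ for $(\mu_1,\mu_2)$ as a competitor in the dual problem for $(\mu_{n,1},\mu_{n,2})$ and again invoke $\mu_{n,i}\weakto\mu_i$ to obtain $\liminf_n T_c(\mu_{n,1},\mu_{n,2})\ge T_c(\mu_1,\mu_2)$. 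Together these establish $T_c(\mu_{n,1},\mu_{n,2})\to T_c(\mu_1,\mu_2)$.

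It remains to identify the limit potentials. The convergence of the values just obtained forces the subsequential limit $(\alpha_\infty,\beta_\infty)$ to attain $T_c(\mu_1,\mu_2)$, hence to be a maximizer for $(\mu_1,\mu_2)$. By Theorem~\ref{th:duality}(iv) the maximizers are uniquely determined on $\mathrm{supp}\,\mu_1$ and $\mathrm{supp}\,\mu_2$, and together with the canonical $c$-conjugate normalisation this pins them down on all of $\Omega$; thus every subsequential limit equals the fixed maximizer $(\alpha,\beta)$ of $(\mu_1,\mu_2)$, and the standard subsequence principle upgrades this to convergence of the full sequence, $\alpha_n\to\alpha$ and $\beta_n\to\beta$ uniformly on compact subsets of $\Omega$ (indeed on $\overline\Omega$). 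I expect the main obstacle to be the uniform bound on the potentials in the first step --- this is precisely where the $c$-conjugacy normalisation (controlling the oscillation through the boundedness of $c$) and the non-vanishing of the limit masses are both needed; the identification of the limit off the supports in the last step is the other point requiring a little care.
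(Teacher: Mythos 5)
Your proposal is correct, but it takes a genuinely different route from the paper's proof: yours is purely dual-sided, while the paper mixes primal and dual. For the lower bound $\liminf_n T_c(\mu_{n,1},\mu_{n,2}) \geq T_c(\mu_1,\mu_2)$, the paper passes to the limit in the \emph{primal} problem, extracting a weakly convergent subsequence of optimal plans $\gamma_n$ and invoking the joint lower semicontinuity of the relative entropy together with the continuity of $c$. You instead plug a fixed optimal dual pair for the limit measures into the dual problem for $(\mu_{n,1},\mu_{n,2})$ and pass to the limit, which is shorter and avoids the compactness theory for plans altogether. For the upper bound, both you and the paper extract a uniform limit of the dual potentials by Arzel\`a--Ascoli and use admissibility of the limit pair; the difference is that the paper simply cites \cite[Lem.\ 4.9]{liero2015optimal} for the uniform sup bound, whereas you derive it from scratch via the $c$-conjugate normalisation, the oscillation bound $\mathrm{osc}(\alpha_n),\mathrm{osc}(\beta_n)\leq \sup c$, and the comparison of the dual value with the zero pair --- a self-contained and reusable argument, at the small price of having to set aside the degenerate case $\mu_i=0$, which you correctly flag.

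One small shared subtlety: Theorem~\ref{th:duality}(iv) gives uniqueness of the optimal potentials only on $\mathrm{supp}\,\mu_1$ and $\mathrm{supp}\,\mu_2$, so strictly speaking the identification of the limit (and the paper's ``proving the claim'') pins down $\alpha_\infty,\beta_\infty$ only there; extending to all of $\Omega$ requires appealing, as you do, to a fixed normalisation such as $c$-conjugacy. But note that the $c$-transform is taken over all of $\Omega$, not just the supports, so ``determined on the support plus $c$-conjugate'' is not automatically a well-posed normalisation when the supports are proper subsets --- this deserves a sentence in a final write-up, though it is a flaw the paper's own proof shares.
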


\begin{proof} First we show that $\lim_{n\to \infty} T_c(\mu_{n,1}, \mu_{n,2})=T_c( \mu_1,\mu_2)$. Let us consider $\{ \gamma_n \}$ the set of optimal plans in \eqref{eqn:def_trent} which forms a precompact set because the associated marginals do \cite[Prop. 2.10]{liero2015optimal}. Thus, from a subsequence of indices that achieves $\liminf_{n \to \infty} T_c(\mu_{n,1}, \mu_{n,2})$, one can again extract a subsequence for which the optimal plans weakly converge, to an \emph{a priori} suboptimal plan. Using the joint semicontinuity of the entropy and the continuity of the cost we deduce
$$\liminf_{n \to \infty} T_c(\mu_{n,1}, \mu_{n,2}) \geq T_c(\mu_1, \mu_2).$$
Similarly, the sequence of optimal dual variables $\alpha_n, \beta_n$ form a precompact set since it is a sequence of bounded $L$-Lipschitz functions (see \cite[Lem. 4.9]{liero2015optimal}). Any weak cluster point $\alpha_0,\beta_0$ satisfies $\alpha_0 + \beta_0 \leq c$ and in particular, taking again a subsequence of indices achieving the $\limsup$ and $\alpha_0,\beta_0$ a cluster point of it, we have
\begin{align*} \limsup_{n \to \infty} T_c(\mu_{n,1}, \mu_{n,2}) &= \limsup_{n \to \infty} \int_{\Omega} (1- e^{-\alpha_n}) \, d \mu_{n,1} + \int_{\Omega} (1- e^{-\beta_n}) \, d \mu_{n,2} \\ &=   \int_{\Omega} (1- e^{-\alpha_0}) \, d \mu_{1} + \int_{\Omega} (1- e^{-\beta_0}) \, d \mu_{2} \leq T_c( \mu_1,\mu_2).
\end{align*}
Therefore, the limit of the costs is the cost of the limits and the inequalities are equalities. We deduce that every weak limit of  $\{ \gamma_n\}$ is an optimal plan, and also that $\alpha_0$ and $\beta_0$ are the unique maximizers for the dual problem of $\mu_1$ and $\mu_2$, proving the claim. 
\end{proof}

\begin{proposition}\label{prop:increasing} Let us consider a increasing sequence of lower semi-continuous costs $c_n(x,y)$ and let us denote by $c (x,y)= \lim_n c_n(x,y)$. Then for every $\mu_1, \mu_2 \in \mathcal{M}_+(\R^d)$ we have $ T_{c_n}( \mu_1, \mu_2) \uparrow T_c(\mu_1,\mu_2)$. Moreover 
\begin{itemize}
\item[(i)] any weak limit of optimal plans $\gamma_n$ for $T_{c_n}(\mu_1, \mu_2)$ is optimal for $T_c(\mu_1, \mu_2)$;
\item[(ii)] if $\phi_n, \psi_n$ are optimal potentials for $T_{c_n}(\mu_1,\mu_2)$, we have $\phi_n \to \phi$  in $L^1(\mu_1)$ and $\psi_n \to \psi$ in $L^1(\mu_2)$, where $\phi$ and $\psi$ are optimal potentials for $T_c$;
\item[(iii)] in the case $c=c_l$ and $c_n=f_n(|x-y|)$  (as in Lemma \ref{lem:f_n}) we have also that $(\phi_n,\nabla \phi_n) \to (\phi,\nabla \phi)$ in $L^2(\mu_1)$ and similarly for $\psi_n$. 
\end{itemize}

\end{proposition}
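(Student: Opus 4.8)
I would prove the four claims in sequence: the first two by soft arguments (monotonicity, compactness, and strict concavity of the dual problem), and the third by exploiting the quantitative link between $f_\ell$ and $f_\ell'$ recorded in Lemma~\ref{lem:f_n}.

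\emph{Monotone convergence of $T_{c_n}$ and (i).} Since $c_n \le c_{n+1} \le c$ pointwise, for every plan $\gamma$ the objective $\int c_n\,d\gamma + \Ent{\gamma_1}{\mu_1} + \Ent{\gamma_2}{\mu_2}$ is nondecreasing in $n$ and dominated by the one with cost $c$; hence $T_{c_n} \le T_{c_{n+1}} \le T_c$, and it remains to show $L \eqdef \lim_n T_{c_n} \ge T_c$. I would take optimal plans $\gamma_n$ for $T_{c_n}$, observe that $\Ent{\gamma_{n,i}}{\mu_i} \le T_{c_n} \le L$ so that the marginals — and hence $(\gamma_n)$ itself — are weakly precompact (\cite[Prop.~2.10]{liero2015optimal} plus tightness of the marginals), and pass to a subsequence with $\gamma_{n_k} \weakto \gamma_\infty$. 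For each fixed $m$, the functional $\gamma \mapsto \int c_m\,d\gamma + \Ent{\gamma_1}{\mu_1} + \Ent{\gamma_2}{\mu_2}$ is weakly lower semicontinuous (lower semicontinuity of $c_m$, joint lower semicontinuity of the entropy) and, for $n_k \ge m$, is bounded at $\gamma_{n_k}$ by $T_{c_{n_k}} \le L$; therefore its value at $\gamma_\infty$ is $\le L$ for every $m$, and letting $m \to \infty$ by monotone convergence ($\int c_m\,d\gamma_\infty \uparrow \int c\,d\gamma_\infty$) gives $T_c \le \int c\,d\gamma_\infty + \Ent{\gamma_{\infty,1}}{\mu_1} + \Ent{\gamma_{\infty,2}}{\mu_2} \le L$. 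Thus $L = T_c$, $\gamma_\infty$ is optimal for $T_c$, and since the subsequence was arbitrary this proves the convergence $T_{c_n} \uparrow T_c$ and (i).

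\emph{Item (ii).} The point is that the optimal potentials $(\phi_n,\psi_n)$ of $T_{c_n}$ satisfy $\phi_n(x)+\psi_n(y) \le c_n(x,y) \le c(x,y)$, so they are admissible for the dual of $T_c$, and form a \emph{maximizing sequence} for it because their dual value equals $T_{c_n} \to T_c$ (Theorem~\ref{th:duality}). Using the identity $\tfrac{(1-e^{-a}) + (1-e^{-b})}{2} - \bigl(1-e^{-(a+b)/2}\bigr) = -\tfrac12\bigl(e^{-a/2}-e^{-b/2}\bigr)^2$ applied to the half-sum of $(\phi_n,\psi_n)$ and $(\phi_{n'},\psi_{n'})$ — itself admissible for $T_c$, hence of dual value $\le T_c$ — I would deduce $\int \bigl(e^{-\phi_n/2}-e^{-\phi_{n'}/2}\bigr)^2 d\mu_1 + \int \bigl(e^{-\psi_n/2}-e^{-\psi_{n'}/2}\bigr)^2 d\mu_2 \le 2T_c - T_{c_n} - T_{c_{n'}} \to 0$, so $(e^{-\phi_n/2})$ is Cauchy in $L^2(\mu_1)$ and $(e^{-\psi_n/2})$ Cauchy in $L^2(\mu_2)$. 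Writing the limits as $e^{-\phi/2}$, $e^{-\psi/2}$, one checks via $\mu_i$-a.e.\ convergence along a subsequence and $\gamma_n \weakto \gamma_\infty$ that $\phi+\psi \le c$ holds $\mu_1 \otimes \mu_2$-a.e.\ and that the dual value of $(\phi,\psi)$ is $T_c$, i.e.\ $(\phi,\psi)$ are optimal potentials for $T_c$ compatible with $\gamma_\infty$. This already gives $e^{-\phi_n}\to e^{-\phi}$ in $L^1(\mu_1)$; together with the (uniform) boundedness of the optimal potentials for these costs — which makes $t\mapsto -2\log t$ Lipschitz on the common range of the $e^{-\phi_n/2}$ — it upgrades to $\phi_n \to \phi$ in $L^1(\mu_1)$, and symmetrically for $\psi_n$.

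\emph{Item (iii), and the main obstacle.} Take now $c = \clog$, $c_n = \cn$ and $\mu_1 \ll \mathcal L^d$. By Theorem~\ref{th:duality}(iii), $\nabla \phi_n(x) = f_n'(|x-y|)\tfrac{x-y}{|x-y|}$ for $\gamma_n$-a.e.\ $(x,y)$, so by Lemma~\ref{lem:f_n}(ii) $|\nabla\phi_n(x)|^2 = f_n'(|x-y|)^2 \le 4\bigl(e^{f_n(|x-y|)}-1\bigr) = 4\bigl(e^{\phi_n(x)+\psi_n(y)}-1\bigr)$ on $\mathrm{supp}\,\gamma_n$; with the potential bounds this gives a uniform $L^\infty(\mu_1)$ bound on $\nabla\phi_n$, whence $\nabla\phi_n \rightharpoonup \nabla\phi$ weakly in $L^2(\mu_1)$ (using $\phi_n \to \phi$ in $L^1$). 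The remaining step, which I expect to be the real difficulty, is to upgrade this to \emph{strong} $L^2(\mu_1)$ convergence, for which it suffices to show $\int |\nabla\phi_n|^2 d\mu_1 \to \int |\nabla\phi|^2 d\mu_1$. I would prove this by writing $\int |\nabla\phi_n|^2 d\mu_1 = \int f_n'(|x-y|)^2 e^{\phi_n(x)}\,d\gamma_n(x,y)$ and passing to the limit in the right-hand side, using: the convergence $\gamma_n \weakto \gamma$ from (i) and of the potentials from (ii); the fact that the plans eventually concentrate on a region $\{|x-y|\le r\}$ with $r<\pi/2$ (because on $\mathrm{supp}\,\gamma_n$ one has $f_n(|x-y|) = \phi_n(x)+\psi_n(y)$ bounded while $f_n \uparrow f_\ell$ with $f_\ell(t)\to\infty$ as $t\to\pi/2^-$, together with $e^{f_n(t)}-1\ge t^2$); and the convergences $f_n\to f_\ell$, $f_n'\to f_\ell'$ on $[0,r]$ with the exact identity $f_\ell'^2 = 4(e^{f_\ell}-1)$ — the interplay of all these being the delicate part of the proposition. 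The claims for $\psi_n$ follow symmetrically.
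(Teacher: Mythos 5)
Your argument for the monotone convergence and item (i) matches the paper's proof essentially verbatim (take optimal plans, extract a weak limit, lower semicontinuity at fixed $m$, then let $m\to\infty$), so there is nothing to say there.

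For item (ii) you take a genuinely different route. The paper observes that all the inequalities in the proof of (i) are equalities, so in particular $\Ent{(\gamma_n)_1}{\mu_1}\to\Ent{\gamma_1}{\mu_1}$, and then invokes the appendix lemma (entropy convergence plus weak convergence of densities implies $L^1$ convergence of the densities) to get $\phi_n\to\phi$ in $L^1(\mu_1)$. You instead view $(\phi_n,\psi_n)$ as a maximizing sequence for the dual of $T_c$ and exploit the strict concavity of $t\mapsto 1-e^{-t}$ via the midpoint identity, obtaining that $e^{-\phi_n/2}$ is Cauchy in $L^2(\mu_1)$. That argument is correct and self-contained, and it buys you something the paper's proof does not: it does not use the primal plans or their entropies at all. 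One caveat on notation: throughout the paper, in particular in Proposition~\ref{prop:stime} and in this proposition, ``optimal potential'' $\phi$ means the \emph{transformed} potential $\phi=1-e^{-\alpha}$, not the raw Kantorovich potential $\alpha$. Your $L^2$-Cauchy estimate for $e^{-\alpha_n/2}$ already gives $1-e^{-\alpha_n}\to 1-e^{-\alpha}$ in $L^1(\mu_1)$, i.e.\ exactly the statement of (ii). The final upgrade you attempt, from $e^{-\alpha_n}\to e^{-\alpha}$ to $\alpha_n\to\alpha$ in $L^1$, is therefore not needed; it is also the one place where you appeal to a ``uniform boundedness of the optimal potentials'' which is not established in the paper and which is not obvious, since the $c_n$ have Lipschitz constants blowing up with $n$.

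For item (iii) there is a genuine gap, and you half-acknowledge it yourself. Your plan rests on two claims that are stated but not justified: (a) a uniform $L^\infty(\mu_1)$ bound on $\nabla\phi_n$, and (b) that the plans $\gamma_n$ eventually concentrate on a set $\{|x-y|\le r\}$ with $r<\pi/2$ uniformly in $n$. Claim (a) is doubtful: the inequality $|\nabla\alpha_n(x)|^2\le 4(e^{c_n(x,y)}-1)$ on the support of $\gamma_n$ is only useful if $c_n$ is bounded there, which is exactly claim (b); but (b) is precisely the kind of uniform-in-$n$ concentration you would have to prove, and neither Lemma~\ref{lem:f_n} nor anything established so far yields it. Because of this, your proposed passage to the limit in $\int f_n'(|x-y|)^2e^{-\alpha_n(x)}\,d\gamma_n$ is not underwritten by the estimates you have. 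The paper avoids these difficulties entirely: it writes $\gamma_n$ as a graph over its first marginal, applies the appendix lemma~\ref{lem:conv2} to deduce that the optimal maps, and hence $\nabla\phi_n$, converge in $\mu_1$-measure; then, using Proposition~\ref{prop:stime} (an inequality for the Lipschitz costs $c_n$, an \emph{equality} for $\clog$ because $f_\ell'^2=4(e^{f_\ell}-1)$), it gets
\[
\int(4\phi_n^2+|\nabla\phi_n|^2)\,d\mu_1\le 4T_{c_n}(\mu_1,\mu_2)\le 4T_{\clog}(\mu_1,\mu_2)=\int(4\phi^2+|\nabla\phi|^2)\,d\mu_1,
\]
so that $v_n=(2\phi_n,\nabla\phi_n)$ converges in measure to $v=(2\phi,\nabla\phi)$ with $\limsup\|v_n\|_{L^2(\mu_1)}\le\|v\|_{L^2(\mu_1)}$, which forces strong $L^2$ convergence. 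If you want to keep your dual-side viewpoint, you would still need an argument of this $\limsup$-of-norms type; the uniform $L^\infty$ bound is neither available nor necessary.
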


\begin{proof} As in the previous proof we take $\gamma_n$ as minimizers for the primal problem of $T_{c_n} (\mu_1, \mu_2)$. They form a pre compact set and so up to subsequences they converge to $\gamma$, which is \emph{a priori} a suboptimal plan for $T_c(\mu_1, \mu_2)$. Let us fix $m >0 $ and then we know that for any $n \geq m$ we have $c_n \geq c_m$ and so
$$T_{c_n} (\mu_1, \mu_2) \geq \Ent{ (\gamma_n )_1 }{\mu_1} +  \Ent{ (\gamma_n )_2}{\mu_2} +  \int c_m \, d \gamma_n.$$
Now, using the semicontinuity of the entropy and the semicontinuity of $c_m$ we get
$$ \liminf_{n \to \infty} T_{c_n} (\mu_1, \mu_2) \geq  \Ent{ (\gamma)_1 }{\mu_1} +  \Ent{ (\gamma)_2}{\mu_2} +  \int c_m \, d \gamma.$$
Taking the supremum in $m$ and then the definition of $T_c$ we get
\begin{equation}\label{eqn:Tcn}
\liminf_{n \to \infty} T_{c_n}  (\mu_1, \mu_2) \geq  \Ent{ (\gamma)_1 }{\mu_1} +  \Ent{ (\gamma)_2}{\mu_2} +  \int c \, d \gamma \geq T_{c}  (\mu_1, \mu_2).
\end{equation}
Noticing that $T_c(\mu_1, \mu_2) \geq T_{c_n} (\mu_1, \mu_2)$ we can conclude. In particular $\gamma$ is optimal since we have equality in all inequalities of \eqref{eqn:Tcn}.

In order to prove (ii) notice that, since in every inequality we had equality, in particular we have $\Ent{ (\gamma_n )_1 }{\mu_1} \to \Ent{ (\gamma)_1 }{\mu_1}$. Since $\gamma_n \weakto \gamma$ and $(\gamma_n)_1 = (1-\phi_n)\mu_1$ and $\gamma_1 =(1-\phi)\mu_1$ we conclude  by Lemma \ref{lem:conv1}.

In the case we are in the hypotheses of (iii), we have that 
$$\gamma_n= (\id, h_n(\nabla \phi) )_{\#} (1-\phi_n)\mu_1 \weakto (\id, h (\nabla \phi ))_{\#} (1-\phi) \mu_1,$$
where $h_n$ is converges pointwise to $h$. Then by  Lemma \ref{lem:conv2} we deduce that $\nabla \phi_n \to \nabla \phi$ in measure with respect to $\mu_1$. Using Proposition \ref{prop:stime} we have 
$$ \int 4\phi_n^2 +  | \nabla \phi_n|^2 \, d \mu_1 \leq 4T_{c_n}(\mu_1, \mu_2) \leq 4T_c(\mu_1, \mu_2) = \int 4\phi^2+ |\nabla \phi|^2 \, d \mu_1,$$
where the last inequality can be proven as the first part of Proposition \ref{prop:stime} in the case $c=\clog$. Now, let $v_n = (2\phi_n , \nabla \phi_n)$ and $v=(2\phi , \nabla \phi)$. Since $\limsup_{n} \int |v_n|^2 \, d \mu_1 \leq \int |v|^2 \, d \mu_1$, we have $v_n \weakto w$ in $L^2(\mu_1)$,  up to subsequences: however since $v_n \to v$ in measure  we conclude $v_n \weakto v$ in $L^2(\mu_1)$ but then using $\| v\|_2^2 \geq \lim \| v_n\|_2^2$ we finally conclude $v_n \to v $ in $L^2(\mu_1)$.
\end{proof}

The following estimate allows to capture the infinitesimal behaviour of the entropy-transport metrics.

\begin{proposition}\label{prop:stime} Let $\mu_1 \in \mathcal{M}_+(\R^d)$ be an absolutely continuous measure and let $\phi:=1-e^{-\alpha}$ where $\alpha$ is the potential relative to $\mu_1$ in the minimization problem $T_{c_n}( \mu_1, \mu_2)$. Then we have that
 $$ \int_{\R^d} (| \nabla \phi|^2 + 4\phi^2) \, d \mu_1 \leq 4T_{c_n}(\mu_1, \mu_2); $$
 moreover for every $f \in C^2_c ( \R^d)$ we have
$$ \left| \int_{\R^d} f \, d (\mu_2 - \mu_1) + \frac 12 \int_{\R^d} \nabla f \cdot \nabla \phi \, d \mu_1 + 2 \int_{\R^d} f \phi \, d \mu_1 \right| \leq 5 \| f \|_{C^2} T_{c_n}( \mu_1, \mu_2), $$
where $\| f\|_{C_2} = \| f\|_{\infty } + \| \nabla f \|_{\infty} + \| D^2 f \|_{\infty}$.
\end{proposition}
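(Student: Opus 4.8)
The plan is to deduce everything from the primal--dual relations of Theorem~\ref{th:duality} applied to the cost $c_n(x,y)=f_n(|x-y|)$, combined with the pointwise inequality $f_n'^2\le 4(e^{f_n}-1)$ of Lemma~\ref{lem:f_n}. Fix an optimal plan $\gamma$ and optimal potentials $\alpha,\beta$, and set $\psi:=1-e^{-\beta}$. By (i)--(iv) we have $\gamma_1=(1-\phi)\mu_1$, $\gamma_2=(1-\psi)\mu_2$, $\alpha(x)+\beta(y)=c_n(x,y)$ for $\gamma$-a.e.\ $(x,y)$, $\nabla\alpha(x)=f_n'(|x-y|)\tfrac{x-y}{|x-y|}$ for $\gamma$-a.e.\ $(x,y)$ (this is where absolute continuity of $\mu_1$ enters), and $T_{c_n}(\mu_1,\mu_2)=\mu_1(\R^d)+\mu_2(\R^d)-2\gamma(\R^{2d})$. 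I will use repeatedly the two elementary identities $\int e^{\alpha(x)}\,d\gamma=\int e^{\alpha}e^{-\alpha}\,d\mu_1=\mu_1(\R^d)$ and $\int e^{\beta(y)}\,d\gamma=\mu_2(\R^d)$, as well as $\phi\mu_1=(e^{\alpha}-1)\gamma_1$ and $\psi\mu_2=(e^{\beta}-1)\gamma_2$.

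For the first inequality I would argue as follows. Since $\nabla\phi=(1-\phi)\nabla\alpha$ and $\gamma_1=(1-\phi)\mu_1$, one has $\int|\nabla\phi|^2\,d\mu_1=\int(1-\phi(x))\,|\nabla\alpha(x)|^2\,d\gamma$; using $|\nabla\alpha(x)|^2=f_n'(|x-y|)^2\le 4(e^{\alpha(x)+\beta(y)}-1)$ and $1-\phi(x)=e^{-\alpha(x)}$, the integrand is at most $4\bigl(e^{\beta(y)}-e^{-\alpha(x)}\bigr)$. Integrating and using the identities above together with $\int e^{-\alpha(x)}\,d\gamma=\int(1-\phi)^2\,d\mu_1$ gives $\int|\nabla\phi|^2\,d\mu_1\le 4\mu_2(\R^d)-4\int(1-\phi)^2\,d\mu_1$. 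Adding $4\int\phi^2\,d\mu_1$, using $\phi^2-(1-\phi)^2=2\phi-1$, and substituting $T_{c_n}=\mu_2(\R^d)-\mu_1(\R^d)+2\int\phi\,d\mu_1$, the right-hand side collapses \emph{exactly} to $4T_{c_n}(\mu_1,\mu_2)$. (The inequality is an identity precisely when the pointwise bound is, i.e.\ for the cost $\clog$, which also yields the last inequality claimed in Proposition~\ref{prop:increasing}(iii).)

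For the second inequality, I would first convert everything to a single integral against $\gamma$. Writing $\mu_i=\gamma_i+(\mu_i-\gamma_i)$, the term $\int f\,d(\mu_2-\mu_1)$ equals $\int(f(y)-f(x))\,d\gamma+\int f(y)(e^{\beta(y)}-1)\,d\gamma-\int f(x)(e^{\alpha(x)}-1)\,d\gamma$; adding $2\int f\phi\,d\mu_1=2\int f(x)(e^{\alpha(x)}-1)\,d\gamma$ and $\tfrac12\int\nabla f\cdot\nabla\phi\,d\mu_1=\tfrac12\int\tfrac{f_n'(|x-y|)}{|x-y|}\nabla f(x)\cdot(x-y)\,d\gamma$, and splitting $f(y)(e^{\beta(y)}-1)=f(x)(e^{\beta(y)}-1)+(f(y)-f(x))(e^{\beta(y)}-1)$, everything reorganizes as
\[
I=\underbrace{\int f(x)\bigl(e^{\alpha(x)}+e^{\beta(y)}-2\bigr)\,d\gamma}_{I_1}+\underbrace{\int(f(y)-f(x))\,e^{\beta(y)}\,d\gamma}_{I_2}+\underbrace{\tfrac12\int\tfrac{f_n'(|x-y|)}{|x-y|}\,\nabla f(x)\cdot(x-y)\,d\gamma}_{I_3}.
\]
The term $I_1$ is controlled \emph{exactly}: by AM--GM $e^{\alpha(x)}+e^{\beta(y)}-2\ge 2e^{c_n(x,y)/2}-2\ge 0$, and by the two identities $\int\bigl(e^{\alpha(x)}+e^{\beta(y)}-2\bigr)\,d\gamma=T_{c_n}(\mu_1,\mu_2)$, so $|I_1|\le\|f\|_\infty\,T_{c_n}$. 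For $I_2+I_3$ I would Taylor-expand $f(y)-f(x)=\nabla f(x)\cdot(y-x)+R(x,y)$ with $|R|\le\tfrac12\|D^2f\|_\infty|x-y|^2$, turning $I_2+I_3$ into $\int\nabla f(x)\cdot(y-x)\bigl(e^{\beta(y)}-\tfrac{f_n'(|x-y|)}{2|x-y|}\bigr)\,d\gamma+\int R\,e^{\beta(y)}\,d\gamma$; note that neither $I_2$ nor $I_3$ is individually of size $T_{c_n}$ (both are only of size $\sqrt{T_{c_n}}$), and the whole point is the cancellation produced by $e^{\beta(y)}\approx1$ and $\tfrac{f_n'(t)}{2t}\approx1$ near the diagonal.

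The remaining task is to bound, by universal multiples of $T_{c_n}$, the three quantities $\int\bigl||x-y|-\tfrac12 f_n'(|x-y|)\bigr|\,d\gamma$, $\int|x-y|\,|e^{\beta(y)}-1|\,d\gamma$ and $\int|x-y|^2e^{\beta(y)}\,d\gamma$, and this is where the hard work lies. Near the diagonal it is routine: for $|x-y|\le1$ one has $f_n=f_\ell$, so $|x-y|^2\le e^{f_n(|x-y|)}-1\le\sec^2(1)\,f_n(|x-y|)$ (Lemma~\ref{lem:f_n} together with $e^t-1\le t e^{t}$), and $f_\ell'(t)=2\tan t$ gives $\bigl|t-\tfrac12 f_n'(t)\bigr|=\tan t-t\le$ a universal multiple of $t^2$, hence of $f_n(t)$; combined with $\int f_n(|x-y|)\,d\gamma\le T_{c_n}$ (the entropies in~\eqref{eqn:def_trent} are nonnegative) and $e^{\beta(y)}=e^{f_n(|x-y|)-\alpha(x)}$ on $\operatorname{supp}\gamma$, the near-diagonal contributions come out bounded by universal multiples of $\|f\|_{C^2}T_{c_n}$. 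The main obstacle is the region $|x-y|\ge1$, where $f_n$ need not be comparable to $|x-y|^2$: there one must combine the fact that $f_n(|x-y|)\ge f_\ell(1)>0$ (so $\gamma$ charges this region by at most $T_{c_n}/f_\ell(1)$) with the \emph{compact support} of $f$ (which kills all the integrands unless $x$ or $y$ lies in a fixed compact set), while still controlling the factors $e^{\beta(y)}$ without any a priori pointwise bound on $\beta$. Assembling these estimates with the right numerical coefficients so that the total is exactly $5\|f\|_{C^2}T_{c_n}$ is the delicate bookkeeping I expect to be the crux.
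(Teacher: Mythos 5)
Your first inequality is correct and follows the paper's route exactly: expand $|\nabla\phi|^2 = e^{-2\alpha}|\nabla\alpha|^2$, pass to $\gamma$, apply $f_n'^2\le 4(e^{f_n}-1)$, and use $\int e^{\beta}\,d\gamma=\mu_2(\R^d)$, $\int e^{-\alpha}\,d\gamma=\int e^{-2\alpha}\,d\mu_1$ to land on $4T_{c_n}$ after adding $4\int\phi^2\,d\mu_1$.

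For the second inequality, however, your decomposition $I=I_1+I_2+I_3$ contains a gap that I do not think is just ``delicate bookkeeping.'' The treatment of $I_1$ is genuinely nice --- the identity $\int(e^\alpha+e^\beta-2)\,d\gamma = T_{c_n}$ together with AM--GM gives $|I_1|\le\|f\|_\infty T_{c_n}$, which is even sharper than the paper's bound on the corresponding pieces (III)+(IV). But your remaining terms carry a factor $e^{\beta(y)}$: after Taylor expansion you must bound $\int|x-y|^2\,e^{\beta}\,d\gamma$ and $\int|x-y|\,|e^{\beta}-1|\,d\gamma$ by universal multiples of $T_{c_n}$, and this cannot be done. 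Writing $e^\beta = e^{c_n}e^{-\alpha}$ on $\operatorname{supp}\gamma$ and $|x-y|^2 \le e^{c_n}-1$ only gives $\int|x-y|^2 e^{\beta}\,d\gamma \le \int (e^{c_n}-1)e^{c_n}e^{-\alpha}\,d\gamma$, which involves $e^{2c_n}$ and has no $T_{c_n}$-bound --- the cost $c_n$ is unbounded, there is no pointwise control on $\beta$, and the compact support of $f$ does not help because $y$ (where $\beta$ is evaluated) is integrated out freely.

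The paper sidesteps this by deliberately weighting the Taylor remainder by $e^{-\alpha(x)}$ rather than $e^{\beta(y)}$: its term (I) is $\int\bigl(f(y)-f(x)-\nabla f(x)\cdot(y-x)\bigr)e^{-\alpha}\,d\gamma$, and with $|x-y|^2\le e^{c_n}-1$ one gets $\int|x-y|^2 e^{-\alpha}\,d\gamma \le \int(e^{c_n}-1)e^{-\alpha}\,d\gamma = \mu_2(\R^d)-\int e^{-2\alpha}\,d\mu_1$, which is $\le T_{c_n}$ because $T_{c_n} - \bigl(\mu_2(\R^d)-\int e^{-2\alpha}\,d\mu_1\bigr) = \int(1-e^{-\alpha})^2\,d\mu_1 \ge 0$. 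The same identity $(e^{c_n}-1)e^{-\alpha}=e^{\beta}-e^{-\alpha}$ is what makes the paper's term (III), the mass-mismatch piece, closable. This choice of weight is the structural step missing from your proposal; the rest of the paper's argument (handling the Lipschitz piece (II) via the pointwise inequality $|f_n'(z)-2z|\le 4(e^{f_n(z)}-1)$ proven from Lemma~\ref{lem:f_n}, and bounding (IV) $\le\|f\|_\infty T_{c_n}$) is then straightforward. I'd suggest reorganizing so that every factor attached to a potentially large quantity is $e^{-\alpha(x)}$ and never $e^{\beta(y)}$.
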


\begin{proof} In the sequel we will work always $\gamma$-a.e., where $\gamma$ is the optimal plan for $T_{c_n}( \mu_1, \mu_2)$. We have $\alpha(x) + \beta (y) =c(x,y)= f_n (|y-x|)$ and $|\nabla \alpha(x)| = f_n' (|y-x|)$. We first find an upper bound for the gradient term, using an inequality from Lemma \ref{lem:f_n}:
\begin{align*}
 \int_{\R^d}  | \nabla \phi|^2 \, d \mu_1& = \int_{\R^d} | \nabla \alpha|^2 e^{-2 \alpha } \, d \mu_1 = \int_{\R^{2d}} | f_n'(|y-x|)|^2 e^{- \alpha  } \, d \gamma  \\
 & \leq \int_{\R^{2d}} 4(e^{c} -1 ) e^{- \alpha} \, d \gamma = \int_{\R^{2d}} 4e^{\alpha+ \beta} e^{-\alpha} \, d \gamma - 4\int_{\R^{2d}} e^{- \alpha} \, d \gamma \\
 &= 4\int e^{\beta} \, d \gamma - 4 \int e^{-\alpha} \, d \gamma = 4 \,\mu_2(\R^d) - 4 \int e^{- 2 \alpha} \, d \mu_1\, .
\end{align*}
Adding the term $4\int_{\R^d} | \phi|^2 \, d \mu_1$ allows to prove the first inequality:
\begin{align*}
\int_{\R^d} ( | \nabla \phi|^2 + 4| \phi |^2) \, d \mu_1 & \leq 4 \mu_2(\R^d) +  4 \int \left[(1- e^{- \alpha} )^2 - e^{-2\alpha} \right] d \mu_1 \\
&= 4 \left( \mu_2(\R^d) + \mu_1(\R^d) - 2 (e^{-\alpha} \mu_1)(\R^d) \right) \\
&=4T_{c_n}(\mu_1, \mu_2)\, .
\end{align*}
As a byproduct, we have also shown: 
\begin{equation}\label{eqn:exp_est}
\int_{\R^{2d}} (e^{c} -1 ) e^{- \alpha} \, d \gamma  \leq T_{c_n}(\mu_1, \mu_2).
\end{equation}
For the second part we will split the estimate in several parts: %
$$ 
\int_{\R^d} f \, d \mu_2 - \int_{\R^d} f \, d \mu_1 + \frac 12 \int_{\R^d} \nabla f \cdot\nabla \phi \, d \mu_1 + 2\int_{\R^d} f \phi \, d \mu_1= 
$$
$$
= \int_{\R^{2d}} \left( f(y) e^{\beta}  - f(x) e^{\alpha} + \frac 12 \nabla f(x) \cdot  \nabla \alpha  + 2f(x) (1- e^{-\alpha}) e^{\alpha} \right)\, d \gamma =
$$
\begin{align*}
\int_{\R^{2d}} \Bigl(f(y) - f(x) -&\nabla f(x) \cdot (y-x) \Bigr) e^{- \alpha} \, d\gamma \\
&+ \int_{\R^{2d}} \nabla f(x) \cdot \left( \frac {\nabla \alpha(x)}{2} + (y-x)e^{-\alpha} \right) \, d \gamma \\ 
&+ \int_{\R^{2d}} f(y)( e^{\beta}-e^{-\alpha}) \, d \gamma + \int_{\R^{2d}} f(x) (1-e^{-\alpha})^2 e^{\alpha}\, d \gamma \\
&= (I) + (II) + (III) + ( IV)
\end{align*}

Now for (I) we use the Lagrange formula for the remainder in Taylor expansion and then, using  \eqref{eqn:exp_est} and Lemma \ref{lem:f_n} (ii), namely $e^c-1 \geq |x-y|^2$, we get 
\begin{align*}
(I) 
&\leq \frac 12 \| D^2f \|_{\infty} \int_{\R^{2d}} |y-x|^2 e^{-\alpha} \, d \gamma \\& \leq \frac 12\|D^2f\|_{\infty} \int_{\R^{2d}} (e^c-1)e^{-\alpha} \, d \gamma \\
& \leq \frac 12 \| D^2 f\|_{\infty} \, T_{c_n} (\mu_1, \mu_2).
\end{align*}
For the second term: 
\begin{align*}
(II) &  = \int_{\R^{2d}}  \nabla f \cdot \left( \frac { \nabla \alpha (x)}{2} + (y-x) \right) e^{-\alpha} d \gamma + \frac 12 \int_{\R^{2d}}  \nabla f \cdot  \nabla \alpha (x)  \cdot (1- e^{-\alpha}) \, d \gamma \\
& \leq  \| \nabla f \|_{\infty} \left( \int_{\R^{2d}} \left| \frac { \nabla \alpha (x)}{2} + (y-x) \right| e^{-\alpha} \, d \gamma  +  \frac 12 \int_{\R^{2d}} | \nabla \alpha | \cdot |1-e^{-\alpha}|\, d \gamma \right)  \\
&=  (IIa) + (IIb) 
\end{align*}
Now we have $\nabla \alpha (x) = \frac{(x-y)}{|x-y|} f_n' (|x-y|)$ and in particular we have 
$$ 2\left| \frac { \nabla \alpha (x)}{2} + (y-x) \right| = | f_n' (z) - 2z|,$$
where $z=|x-y|$ and we can verify that $|f_n'(z)-2z| \leq 4(e^{f_n(z)}-1)$ independently of $n$. In fact if $f_n'(z) \geq 1$ or $z \geq 1$ this is obvious since we have
$$ | f_n'(z) -2z| \leq \max \{ f_n'(z), 2z\} \leq \max \{ f_n'(z), 2z\}^2 \leq 4(e^{f_n}-1),$$
where in the last inequality we used Lemma \ref{lem:f_n} (ii). In the case $z \leq 1$ instead we have that $f_n'(z)=f'(z)= 2 \tan (z)$ (by Lemma \ref{lem:f_n} (i)), and so, calling $t=\tan (z)$, and using that $|\arctan''(t)| \leq 1$ for every $t$ we have
$$ | f_n'(z) -2z| =2 | \tan (z) -z| = 2 |t- \arctan(t)| \leq t^2 =\frac 14  f_n'(z)^2 \leq e^{f_n(z)}-1.$$
In particular we obtain
\begin{align*}
 (IIa) &= \frac{\|\nabla f\|_{\infty}}2 \int_{\R^{2d}} | f_n'(z) -2z|\, e^{-\alpha} d \gamma \\ 
 &\leq\| \nabla f\|_{\infty} \int_{\R^{2d}} 4(e^c-1) \, e^{-\alpha}  d \gamma \leq 4 \| \nabla f\|_{\infty}\cdot  T_{c_n}(\mu_1, \mu_2).
\end{align*}
Then we use the inequality $ab \leq \frac {a^2}{4c} + c b^2 $ with $a=| \nabla \alpha|$, $b=|1-e^{-\alpha}|$ and $c=e^{\alpha}$ in order to get
\begin{align*}
 (IIb)  &= \frac { \| \nabla f\|_{\infty}}2 \int_{\R^d} | \nabla \alpha| \cdot |1-e^{-\alpha}| e^{-\alpha} \, d \mu_1 \\
 & \leq  \frac { \| \nabla f\|_{\infty}}2 \int_{\R^d} \big( \frac 14 | \nabla \alpha|^2 e^{-2\alpha} + (1-e^{-\alpha})^2 \big) \, d \mu_1 \\ & =\frac {\| \nabla f\|_{\infty}}8 \int_{\R^d} |\nabla \phi|^2 + 4 \phi^2 \, d \mu_1 \leq  \frac { \| \nabla f\|_{\infty}}2 T_{c_n}( \mu_1, \mu_2).
 \end{align*}
Then we have
\begin{align*}
 (III) &\leq \int_{\R^{2d}} |f| (e^{\beta+\alpha} - 1)e^{-\alpha} \, d \gamma \\
 &\leq \| f \|_{\infty} \int_{\R^{2d}} (e^c-1) e^{-\alpha} \, d \gamma \leq \| f\|_{\infty} T_{c_n}(\mu_1, \mu_2)
 \end{align*}
and in the end we conclude with 
$$ (IV) \leq \int_{\R^{d}} |f||1-e^{-\alpha}|^2 \, d \mu_1 \leq \| f \|_{\infty} \int_{\R^d} | \phi|^2 \, d \mu_1 \leq \| f\|_{\infty} T_{c_n}(\mu_1, \mu_2). \qedhere
$$
\end{proof}

\subsection{One step of the scheme}
From now on, we work on a bounded domain $\Omega \subset \R^d$. For a measure $\mu \in \mathcal{M}_+(\Omega)$ which is absolutely continuous and of density bounded by $1$, and a cost function $c$, consider the problem
\begin{equation}\label{eq:onestep}
\prox^{c}_{\tau G} (\mu) \eqdef \argmin \left \{  - \lambda \int_{\Omega} \rho + \frac { T_c (\mu, \rho ) }{2 \tau } \; : \; \rho \in \mathcal{M}_+ (\Omega) , \, \rho \leq 1 \right\}
\end{equation}
which corresponds to as implicit Euler steps as introduced in \eqref{eq:metricEulerScheme}: notice however that for a general cost $c$, the optimal entropy-transport cost $T_c$ is not always the square of a distance. 
We first show that this \emph{proximal operator} is well defined.

\begin{proposition}[Existence and uniqueness]\label{prop:existence step}
If $2\tau\lambda<1$ and $c:\Omega^2 \to [0,\infty]$ is a strictly convex, proper, lower semicontinuous, increasing function of the distance, then $\prox^{c}_{\tau G}$ is a well defined map on $\{\mu \in L^1_+(\Omega)\; : \; \mu\leq 1\}$, that is, the minimization problem admits a unique minimizer. Denoting $\rho=\prox^{c}_{\tau G}(\mu)$, it holds
\begin{itemize}
\item[(i)] $\sqrt{\rho(\Omega)} \leq \frac { 1+ 2 \lambda \tau }{1-2 \lambda \tau } \sqrt{\mu(\Omega)}$;
\item[(ii)] $T_c (\mu, \rho) \leq 2 \tau \lambda (\rho(\Omega)  - \mu(\Omega)) \leq \frac{(4\lambda \tau)^2}{(1-2\lambda \tau)^2} \mu(\Omega)$.
\end{itemize}
\end{proposition}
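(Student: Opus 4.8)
The plan is to treat existence, uniqueness, and the two estimates in that order, exploiting the fact that the objective functional $J(\rho) \eqdef -\lambda\rho(\Omega) + \tfrac{1}{2\tau}T_c(\mu,\rho)$ is (strictly) convex in $\rho$ once we know strict convexity of $\rho\mapsto T_c(\mu,\rho)$. First I would establish that $J$ is lower semicontinuous for the weak topology and coercive on $\{\rho\in\mathcal M_+(\Omega):\rho\le 1\}$: lower semicontinuity of $T_c(\mu,\cdot)$ follows from the joint semicontinuity of the entropy and the lower semicontinuity of $c$ (as used already in the proof of Proposition \ref{prop:increasing}, one can pass to the limit on a minimizing sequence of plans $\gamma$). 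For coercivity, note the crude bound $T_c(\mu,\rho)\ge \Ent{\gamma_2}{\rho}$ minus nothing else is not quite enough; instead I would use the trivial competitor $\gamma=0$, giving $T_c(\mu,\rho)\le \mu(\Omega)+\rho(\Omega)$, together with the fact that $T_c(\mu,\rho)\ge 0$ and grows when $\rho(\Omega)$ is large via condition (iv)-type reasoning, so that $J(\rho)\to+\infty$ as $\rho(\Omega)\to\infty$ (the $+\tfrac{1}{2\tau}T_c$ term dominates the linear $-\lambda\rho(\Omega)$ precisely because $2\tau\lambda<1$). Since $\{\rho\le 1\}$ with bounded mass is weakly compact on the bounded domain $\Omega$, the direct method yields a minimizer; strict convexity of $T_c(\mu,\cdot)$ — which comes from strict convexity of $c$ as a function of distance, after unfolding the definition of $T_c$ in terms of plans and using that the entropy is convex — gives uniqueness, since the constraint set $\{\rho\le1\}$ is convex.

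For the estimates, the key device is to compare the minimizer $\rho$ against the competitor $\mu$ itself. Since $\mu\le 1$, $\mu$ is admissible, and $T_c(\mu,\mu)=0$, so optimality of $\rho$ gives
\[
-\lambda\rho(\Omega) + \frac{T_c(\mu,\rho)}{2\tau} \le -\lambda\mu(\Omega),
\]
which rearranges to the first half of (ii): $T_c(\mu,\rho)\le 2\tau\lambda(\rho(\Omega)-\mu(\Omega))$. (In particular $\rho(\Omega)\ge\mu(\Omega)$.) To close the loop I need a lower bound on $T_c(\mu,\rho)$ in terms of the masses. Here I would use that $c\ge 0$ together with Theorem \ref{th:duality}(iv)-style identities — or more elementarily, the dual formulation: taking constant potentials $\alpha=\beta=a$ with $2a\le c$ (possible since $c\ge0$, though for $c$ vanishing only at the diagonal one must be slightly careful and instead use $\alpha=\beta=0$ is optimal-feasible, giving the bound $T_c(\mu,\rho)\ge$ something). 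The cleanest route: by the Cauchy–Schwarz / AM–GM structure of optimal entropy-transport, $T_c(\mu,\rho)\ge (\sqrt{\rho(\Omega)}-\sqrt{\mu(\Omega)})^2$ — this is the analogue of the fact that the transport-growth distance controls the difference of square-root masses, and can be proven by testing the dual with suitable constants or by a direct argument on plans. Combining $(\sqrt{\rho(\Omega)}-\sqrt{\mu(\Omega)})^2 \le 2\tau\lambda(\rho(\Omega)-\mu(\Omega)) = 2\tau\lambda(\sqrt{\rho(\Omega)}-\sqrt{\mu(\Omega)})(\sqrt{\rho(\Omega)}+\sqrt{\mu(\Omega)})$ and dividing by $(\sqrt{\rho(\Omega)}-\sqrt{\mu(\Omega)})$ (the case of equality being trivial) yields
\[
\sqrt{\rho(\Omega)}-\sqrt{\mu(\Omega)} \le 2\tau\lambda\big(\sqrt{\rho(\Omega)}+\sqrt{\mu(\Omega)}\big),
\]
and solving for $\sqrt{\rho(\Omega)}$ gives exactly (i): $\sqrt{\rho(\Omega)}\le\frac{1+2\lambda\tau}{1-2\lambda\tau}\sqrt{\mu(\Omega)}$. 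The second inequality in (ii) then follows by plugging (i) back in: $\rho(\Omega)-\mu(\Omega)=(\sqrt{\rho(\Omega)}-\sqrt{\mu(\Omega)})(\sqrt{\rho(\Omega)}+\sqrt{\mu(\Omega)})\le \frac{4\lambda\tau}{1-2\lambda\tau}\sqrt{\mu(\Omega)}\cdot\frac{2}{1-2\lambda\tau}\sqrt{\mu(\Omega)}$, and multiplying by $2\tau\lambda$ and using $T_c(\mu,\rho)\le 2\tau\lambda(\rho(\Omega)-\mu(\Omega))$ gives $T_c(\mu,\rho)\le\frac{(4\lambda\tau)^2}{(1-2\lambda\tau)^2}\mu(\Omega)$ after collecting constants.

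The main obstacle I anticipate is the lower bound $T_c(\mu,\rho)\ge(\sqrt{\rho(\Omega)}-\sqrt{\mu(\Omega)})^2$: for the genuine cost $\clog$ this is a known property of $\dist$ (it is the statement that $\dist$ dominates the Hellinger/Fisher–Rao-type distance on masses), but here $c$ is only assumed to be a strictly convex increasing function of the distance with $c(x,x)=0$ allowed, so I need the bound uniformly in $c$. I expect this to come out of the dual formulation by choosing $\alpha=\beta$ constant equal to the value maximizing $2(1-e^{-a})(\text{mass})$ subject to $2a\le c$; when $c$ can be zero on the diagonal this forces $a\le 0$ only where points coincide, so one should instead argue via the primal problem directly: for any plan $\gamma$, $\int c\,d\gamma\ge 0$ and $\Ent{\gamma_1}{\mu}+\Ent{\gamma_2}{\rho}\ge (\sqrt{\gamma_1(\Omega)}-\sqrt{\mu(\Omega)})^2+(\sqrt{\gamma_2(\Omega)}-\sqrt{\rho(\Omega)})^2$ by the standard inequality $\Ent{\sigma\nu}{\nu}\ge(\sqrt{\sigma\nu(\Omega)}-\sqrt{\nu(\Omega)})^2$ after optimizing, and then $\gamma_1(\Omega)=\gamma_2(\Omega)$ lets one conclude by minimizing over the common value. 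Handling this last inequality carefully — including the degenerate cases $\mu(\Omega)=0$ or $\gamma=0$ — is the only genuinely delicate point; everything else is the direct method and elementary algebra.
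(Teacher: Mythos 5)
Your derivation of the two mass estimates is correct and is essentially the same as the paper's: compare the minimizer against the competitor $\rho=\mu$ to get $T_c(\mu,\rho)\le 2\tau\lambda(\rho(\Omega)-\mu(\Omega))$, then use the lower bound $T_c(\mu,\rho)\ge(\sqrt{\rho(\Omega)}-\sqrt{\mu(\Omega)})^2$ (this is exactly Lemma~\ref{lem:mass} of the paper; your sketch via Jensen applied to the entropy and optimizing over the common mass of the marginals is the intended proof), and close the algebra. Your existence argument is also in the same spirit, though the paper argues tightness of the sublevel set of couplings $\gamma$ rather than coercivity in $\rho$, because it wants to avoid assuming $\Omega$ compact.

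The genuine gap is in uniqueness. You assert that $\rho\mapsto T_c(\mu,\rho)$ is strictly convex ``after unfolding the definition of $T_c$ in terms of plans and using that the entropy is convex,'' but this does not follow: $T_c(\mu,\cdot)$ is an infimum over $\gamma$ of a jointly convex functional, and such an infimum is only convex, not strictly convex, without an extra mechanism. In particular, averaging two optimal plans $\gamma^a,\gamma^b$ for $\rho_a,\rho_b$ and using joint convexity of the entropy gives you exactly the convexity inequality with no slack. The paper gets around this by arguing at the level of the coupling: since $c$ is a strictly convex increasing function of the distance, it satisfies the \emph{twist condition}, so any optimal plan between two absolutely continuous marginals is induced by a map; the midpoint plan $\tfrac12\gamma^a+\tfrac12\gamma^b$ is also a minimizer, hence also deterministic, which forces the two maps to agree on the common support; and finally the \emph{strict} convexity of $\gamma_1\mapsto\Ent{\gamma_1}{\mu}$ forces $\gamma^a_1=\gamma^b_1$, whence $\gamma^a=\gamma^b$, and $\rho$ is then pinned down as an explicit function of $\gamma_2$. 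Without invoking the twist condition (or some equivalent rigidity of optimal plans), ``strict convexity of $c$'' alone does not produce strict convexity of the value function $T_c(\mu,\cdot)$, so your uniqueness step as stated is not justified.
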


\begin{proof}
The definition of the proximal operator requires to solve a problem of the form
\[
\min_{\gamma\in \mathcal{M}_+(\Omega \times \Omega)} 
I(\gamma) \quad \text{where}\quad I(\gamma) \eqdef \int_{\Omega\times \Omega} c d \gamma + \mathcal{F}_1(\gamma_1) + \mathcal{F}_2(\gamma_2) 
\]
where $\mathcal{F}_1(\gamma_1) = \Ent{\gamma_1}{\mu}$ and $\mathcal{F}_2(\gamma_2) = \inf_{\rho\leq 1}\{ \Ent{\gamma_2}{\rho} -2\tau\lambda \rho(\Omega)\}$ are both convex functions of the marginals of $\gamma$ (note that the optimal $\rho$ in the definition of $\mathcal{F}_2$ is explicit using the pointwise first order optimality conditions : $\rho(x) = \min\{ 1, \gamma_2(x)/(1-2\tau\lambda)\}$, for a.e. $x\in \Omega$). In order to prove the existence of a minimizer, we give a proof that does not assume compactness of $\Omega$ since we need the mass estimates anyways. 

Remark that $\gamma=\mu\otimes \mu$ is feasible and that $I$ is weakly lower semicontinuous so we only have to show that the closed sublevel set $S = \{\gamma\in \mathcal{M}_+(\Omega^2)\; : \; I(\gamma) \leq I(\mu \otimes \mu)\}$ is tight, and thus compact, in order to prove the existence of a minimizer. 
Let us consider $\gamma \in S$ and $\rho(x)= \min\{ 1, \gamma_2(x)/(1-2\tau\lambda)\}$: then we have 
\[
 - \lambda \mu(\Omega) \geq I(\mu \otimes \mu) \geq I(\gamma) \geq  - \lambda \rho(\Omega) + \frac { T_c(\mu, \rho)}{2\tau}.
\]
Then, using  Lemma \ref{lem:mass} we obtain
\[
 - \lambda \mu(\Omega) \geq - \lambda \rho(\Omega) + \frac1{2\tau} { \left(\sqrt{\mu(\Omega)}- \sqrt{\rho(\Omega)}\right)^2};
\]
by rearranging the terms, it follows $\sqrt{\rho(\Omega)} \leq \frac { 1+ 2 \lambda \tau }{1-2 \lambda \tau } \sqrt{\mu(\Omega)}$,
so we have a bounded mass as long as $2\lambda \tau <1$. Thanks to the positivity of $\mathcal{H}$, this implies that $\mathcal{F}_2(\gamma_2)$ is lower bounded for  $\gamma\in S$ and thus both $\mathcal{F}_1(\gamma_1)$ and $\int c d \gamma$ are upper bounded (since nonnegative).
Incidentally, we obtained also (ii), an estimate for the dissipated energy 
\begin{equation}\label{eqn:lipest}
T_c (\mu, \rho) \leq 2 \tau \lambda (\rho(\Omega)  - \mu(\Omega)) \leq \frac{(4\lambda \tau)^2}{(1-2\lambda \tau)^2} \mu(\Omega).
\end{equation}
The upper bound on $\mathcal{F}_1(\gamma_1)$ and the superlinear growth at infinity of the entropy imply that $S$ is bounded and $\{ \gamma_1 \; : \; \gamma \in S\}$ is tight (see \cite[Prop. 2.10]{liero2015optimal}). Let $\epsilon>0$ and $K_1$ be a compact set such that $\gamma_1(\Omega\setminus K_1)<\epsilon/2$ for all $\gamma\in S$. The assumptions on $c$ guarantee that the set $K_\lambda := \{(x,y)\in K_1\times \Omega\; : \; c(x,y)\leq\lambda\}$ is compact for $\lambda\in \R$, and by the Markov inequality, $\int_{K_1\times \Omega} c d \gamma \geq \lambda \gamma((K_1\times \Omega) \setminus K_\lambda)$. Consequently, for $\lambda$ big enough, it holds for all $\gamma \in S$:
\[
\gamma(\Omega^2\setminus K_\lambda) = \gamma_1(\Omega\setminus K_1) + \gamma((K_1\times \Omega) \setminus K_\lambda) \leq \epsilon/2 + \epsilon/2 \leq \epsilon
\]
which proves the tightness of $S$ and shows the existence of a minimizer.




For uniqueness, observe that if $\gamma$ is a minimizer, then it is a deterministic coupling.  Indeed, $\gamma$ is an optimal coupling for the cost $c$ between its marginals, which are absolutely continuous. But $c$ satisfies the \emph{twist condition} which garantees that any optimal plan is actually a map, because $c$ is a strictly convex function of the distance. 

Now take two minimizers $\gamma^a$ and $\gamma^b$ and define $\tilde\gamma =\frac12\gamma^a + \frac12\gamma^b$ which is also a minimizer, by convexity. Note that $\tilde\gamma$ must be a deterministic coupling too, which is possible only if the maps associated to  $\gamma^a$ and $\gamma^b$ agree almost everywhere on $(\spt(\gamma^a_1)\cap \spt(\gamma^b_1)) \times \Omega$. Finally, since all the terms in the functional are convex, it most hold $\mathcal{F}_1(\tilde{\gamma}_1) = \frac12 \mathcal{F}_1(\gamma^a_1) +\frac12 \mathcal{F}_1(\gamma^b_1)$. But $\mathcal{F}_1$ is strictly convex so $\gamma^a_1=\gamma^b_1$ and thus $\gamma^a = \gamma^b$. This, of course, implies the uniqueness of $\rho$ which is explicitly determined from the optimal $\gamma$.
\end{proof}
 
%
We now use the dual formulation  in order to get information on the minimizer.

\begin{proposition}\label{prop:h1} Let us consider $\rho = \prox^{c}_{\tau G} ( \mu )$. Then there exists a Lipschitz optimal potential  $\phi$  relative to $\rho$ for the problem $T_c( \rho, \mu)$ such that $\phi \leq 2\tau \lambda$ and
$$ \rho (x) =  \begin{cases} 1 \quad & \text{ if } \phi < 2\tau \lambda, \\ [0,1] & \text{ if  } \phi =2\tau \lambda . \end{cases} $$
\end{proposition}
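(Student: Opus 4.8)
The plan is to combine the explicit form of the minimiser — obtained from the joint $(\rho,\gamma)$ formulation used in the proof of Proposition~\ref{prop:existence step} — with the duality information of Theorem~\ref{th:duality}. Write $\rho=\prox^{c}_{\tau G}(\mu)$, let $\gamma$ be an optimal plan for $T_c(\rho,\mu)$ (it exists by Proposition~\ref{prop:existence step}), and let $\alpha,\beta$ be optimal dual potentials, so that $\gamma_1=e^{-\alpha}\rho$ and $\gamma_2=e^{-\beta}\mu$ by Theorem~\ref{th:duality}, where $\gamma_1$ denotes the marginal on the $\rho$-side. Since $(\rho,\gamma)$ jointly minimises $-\lambda\rho(\Omega)+\tfrac1{2\tau}\bigl[\int c\,d\gamma+\Ent{\gamma_1}{\rho}+\Ent{\gamma_2}{\mu}\bigr]$, freezing $\gamma$ shows that $\rho$ minimises $\rho'\mapsto -\lambda\rho'(\Omega)+\tfrac1{2\tau}\Ent{\gamma_1}{\rho'}$ over $\{0\le\rho'\le1\}$. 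This problem decouples pointwise, and the one-variable minimisation already performed in Proposition~\ref{prop:existence step} (in the definition of $\mathcal F_2$) gives, for a.e.\ $x$,
\[
\rho(x)=\min\Bigl\{1,\ \tfrac{\gamma_1(x)}{1-2\tau\lambda}\Bigr\},
\]
so in particular $\{\rho=0\}=\{\gamma_1=0\}$ up to a Lebesgue-null set.

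Next I would read off the characterisation on $\spt\rho$. Set $\phi:=1-e^{-\alpha}$; on $\{\rho>0\}$ one has $\gamma_1/\rho=e^{-\alpha}=1-\phi$, so the displayed identity becomes $1=\min\bigl\{1/\rho,\ (1-\phi)/(1-2\tau\lambda)\bigr\}$. If $\rho(x)=1$ this forces $(1-\phi(x))/(1-2\tau\lambda)\ge 1$, i.e.\ $\phi(x)\le 2\tau\lambda$; if $0<\rho(x)<1$ then $1/\rho(x)>1$ forces $(1-\phi(x))/(1-2\tau\lambda)=1$, i.e.\ $\phi(x)=2\tau\lambda$. Hence $\phi\le 2\tau\lambda$ on $\{\rho>0\}$, with $\phi<2\tau\lambda$ possible only where $\rho=1$; by continuity of $\phi$ the same holds on $\spt\rho$, which is the asserted dichotomy on the support of $\rho$.

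It remains to settle regularity and the region $\Omega\setminus\spt\rho$. Lipschitz continuity of $\alpha$ (hence of $\phi$) follows from the regularity of optimal entropy-transport potentials for costs that are (locally) Lipschitz functions of the distance, cf.\ Theorem~\ref{th:duality} and \cite[Lem.~4.9]{liero2015optimal}; note $\alpha$ is bounded on $\spt\rho$ since $\phi\le 2\tau\lambda<1$ there. On $\Omega\setminus\spt\rho$ the potential $\alpha$ is not pinned down by the optimality conditions — only the constraint $\alpha+\beta\le c$ must hold — so I would extend $\alpha$ to $\Omega$ keeping the Lipschitz constant (e.g.\ by the $c$-transform of $\beta$) and then replace it by $\min\{\alpha,\,-\ln(1-2\tau\lambda)\}$. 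This truncation is still Lipschitz, still satisfies $\alpha+\beta\le c$, coincides with $\alpha$ on $\spt\rho$, and does not change $\int(1-e^{-\alpha})\,d\rho$, so $(\alpha,\beta)$ remains optimal while now $\phi\le 2\tau\lambda$ on all of $\Omega$; outside $\spt\rho$ one has $\rho=0\in[0,1]$, and since $\phi$ is continuous and equals $2\tau\lambda$ wherever points of $\{0<\rho<1\}$ accumulate, the two cases $\phi<2\tau\lambda$ and $\phi=2\tau\lambda$ cover $\Omega$.

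I expect the delicate point to be exactly this last step: the first-order conditions determine $\phi$ only on $\spt\rho$, so one has to argue that its free values outside can be taken $\le 2\tau\lambda$ (and, for the first alternative of the dichotomy, equal to $2\tau\lambda$) consistently with Lipschitz continuity across $\partial\spt\rho$. Everything else reduces to the pointwise one-variable minimisation of the first step together with routine manipulations using Theorem~\ref{th:duality}.
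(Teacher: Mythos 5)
Your approach is genuinely different from the paper's: you freeze the optimal plan $\gamma$ and read off the pointwise first-order condition in $\rho$, whereas the paper perturbs $\rho$ directly and uses the locally uniform convergence of dual potentials for the perturbed problem (Proposition~\ref{prop:uniform}) to pass to the limit and obtain a \emph{global} variational inequality
$\int_\Omega(\phi_0-2\tau\lambda)\,d\rho\le\int_\Omega(\phi_0-2\tau\lambda)\,d\bar\rho$ for all $0\le\bar\rho\le1$.
That inequality immediately yields the full three-case dichotomy (including $\phi_0\ge 2\tau\lambda$ a.e.\ on $\{\rho=0\}$), after which the truncation $\phi=\min\{\phi_0,2\tau\lambda\}$ closes the proof.

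The delicate point you flag at the end is a genuine gap, and the freezing device cannot repair it. On $\{\rho=0\}$ your pointwise condition reduces to $0=\min\{1,\sigma_1(x)/(1-2\tau\lambda)\}$ with $\sigma_1=e^{-\alpha}\rho=0$, which is an identity $0=0$ carrying no information about $\alpha$; and indeed Theorem~\ref{th:duality}~(iv) says the potential is only pinned down on $\spt\rho$. Your truncation $\phi=\min\{\phi_0,2\tau\lambda\}$ guarantees $\phi\le 2\tau\lambda$ everywhere, but nothing forces $\phi_0\ge 2\tau\lambda$ off $\spt\rho$; if $\phi_0<2\tau\lambda$ on part of $\{\rho=0\}$ the truncation does nothing there, one gets $\phi<2\tau\lambda$ with $\rho=0\ne1$, and the asserted characterisation $\{\phi<2\tau\lambda\}\subseteq\{\rho=1\}$ fails. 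The missing ingredient is precisely the first-order information coming from perturbations of $\rho$ that create mass where $\rho=0$: those change the optimal $\gamma$, so freezing $\gamma$ loses exactly the constraint you need. This is what the paper recovers by taking a directional derivative with $\bar\rho$ allowed to charge $\{\rho=0\}$ and controlling the limit of the perturbed potentials via Proposition~\ref{prop:uniform} (equivalently, the relevant convexity/Danskin-type bound for $\rho\mapsto T_c(\mu,\rho)$). Your steps up to and including the dichotomy on $\spt\rho$ are correct and do give a shorter argument for $\phi\le2\tau\lambda$ on $\{\rho>0\}$; but to match the statement you would need to supplement them with an argument in the spirit of the paper's perturbation of $\rho$ to handle $\Omega\setminus\spt\rho$.
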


\begin{proof} In the problem \eqref{eq:onestep}, let us consider a competitor $\bar{\rho} \leq 1$ and define $\rho_{\ep} = \rho + \ep ( \bar{ \rho} - \rho )$. Since $\rho_{\ep}$ is still admissible as a competitor we have that
$$- \lambda \int_{\Omega} \rho + \frac { T_c (\mu, \rho ) }{2 \tau } \leq - \lambda \int_{\Omega} \rho_{\ep} + \frac { T_c (\mu, \rho_{\ep} ) }{2 \tau }.$$
We can now use the fact that, if  $\phi_{\ep}$ and $\psi_{\ep}$ are the maximizing potentials in the dual formulation for $T_c(\mu, \rho_{\ep})$, we have
$$T_{c}(\mu, \rho_{\ep}) = \int \phi_{\ep} \, d \rho_{\ep} + \int \psi_{\ep} \, d \mu \;   \qquad \text{and}\qquad T_c(\mu, \rho) \geq \int \phi_{\ep} \, d \rho + \int \psi_{\ep} \, d \mu,$$
because $\phi_{\ep}, \psi_{\ep}$ are admissible potentials also for $\mu$ and $\rho$. In particular we deduce that
$$ - \lambda \int_{\Omega} \rho + \frac 1 {2\tau}  \int \phi_{\ep} \, d \rho \leq - \lambda \int_{\Omega} \rho_{\ep} + \frac 1 {2\tau} \int  \phi_{\ep} \, d \rho_{\ep};$$
$$ 0 \leq - \lambda \int_{\Omega} \ep (\bar{\rho} - \rho) + \ep \frac 1 {2\tau} \int  \phi_{\ep} \, d (\bar{\rho}-\rho). $$
Dividing this inequality by $\ep$ and then let $\ep \to 0$, using that $\phi_{\ep} \to \phi_0$ locally uniformly by Proposition \ref{prop:uniform}, we get
$$ \int_{\Omega}  (\phi_0 - 2\lambda \tau ) \, d \rho \leq \int_{\Omega} (\phi_0- 2 \lambda \tau ) \, d \bar{ \rho}  \qquad \forall 0\leq \bar{\rho} \leq 1,$$
where $\phi_0$ is an optimal (Lipschitz) potential relative to $\rho$. This readily implies 
$$ \rho (x) =  \begin{cases} 1 \quad & \text{ if } \phi_0 < 2\tau \lambda \\ [0,1] & \text{ if  } \phi_0 =2\tau \lambda \\ 0 & \text{ if } \phi_0 > 2\tau \lambda. \end{cases}. $$
Now it is sufficient to take $\phi = \inf\{ 2 \tau \lambda, \phi_0\}$ and we have that $\phi$ is still an admissible potential since $(1-\phi)(1-\psi) \geq (1- \phi_0)(1-\psi) \geq e^{-c}$ and moreover we have $\int \phi \, d \rho = \int \phi_0 \, d \rho$ and so $\phi$ also is optimal.
\end{proof}

\begin{lemma}[Stability of $\prox$]\label{lem:stability} Let $(c_n)_{n\in\mathbb{N}}$ be an increasing sequence of Lipschitz cost functions, each satisfying the hypotheses of Proposition \ref{prop:existence step} and let $c$ be the limit cost. Then $\rho_n = \prox^{c_n}_{\tau G}(\mu)$ converges weakly to $\rho=\prox^{c}_{\tau G}(\mu)$.
\end{lemma}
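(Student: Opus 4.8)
The plan is to combine the convergence of costs established in Proposition \ref{prop:increasing} with the uniform mass and energy estimates from Proposition \ref{prop:existence step}, and then identify the weak limit of $\rho_n$ as the minimizer of the limit problem by a standard $\Gamma$-convergence type argument.

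First I would note that, by Proposition \ref{prop:existence step}(i), the masses $\rho_n(\Omega)$ are bounded uniformly in $n$ by $\frac{1+2\lambda\tau}{1-2\lambda\tau}\sqrt{\mu(\Omega)}$ squared, and that each $\rho_n$ satisfies $\rho_n \leq 1$; since $\Omega$ is bounded this gives tightness, so up to a subsequence $\rho_n \weakto \bar\rho$ for some $\bar\rho \in \mathcal{M}_+(\Omega)$ with $\bar\rho \leq 1$ (the density bound passes to the weak limit because $\{\rho : \rho \leq \mathcal{L}^d\}$ is weakly closed). It remains to show $\bar\rho = \rho := \prox^c_{\tau G}(\mu)$; by uniqueness (Proposition \ref{prop:existence step}) this will imply that the whole sequence converges.

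Next I would prove the two inequalities that pin down $\bar\rho$. For the $\liminf$ inequality, I use that $c_n \leq c$ so $T_{c_n}(\mu,\rho_n) \leq T_c(\mu,\rho_n)$ is the wrong direction; instead I argue as in Proposition \ref{prop:increasing}: fix $m$, then for $n \geq m$ one has $T_{c_n}(\mu,\rho_n) \geq T_{c_m}(\mu,\rho_n)$, and lower semicontinuity of $\gamma \mapsto \int c_m\,d\gamma + \Ent{\gamma_1}{\mu} + \Ent{\gamma_2}{\rho_n}$ in both the plan and the second marginal (jointly, using that entropy is jointly weakly l.s.c. and $\rho_n \weakto \bar\rho$) yields, after taking the optimal plans $\gamma_n$ for $T_{c_n}(\mu,\rho_n)$ and passing to a further weakly convergent subsequence $\gamma_n \weakto \bar\gamma$,
\[
\liminf_n T_{c_n}(\mu,\rho_n) \geq \int c_m \, d\bar\gamma + \Ent{\bar\gamma_1}{\mu} + \Ent{\bar\gamma_2}{\bar\rho};
\]
taking the supremum over $m$ and using monotone convergence gives $\liminf_n T_{c_n}(\mu,\rho_n) \geq T_c(\mu,\bar\rho)$. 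Combined with $-\lambda\rho_n(\Omega) \to -\lambda\bar\rho(\Omega)$, this shows
\[
\liminf_n \left( -\lambda\rho_n(\Omega) + \tfrac{1}{2\tau} T_{c_n}(\mu,\rho_n) \right) \geq -\lambda\bar\rho(\Omega) + \tfrac{1}{2\tau} T_c(\mu,\bar\rho).
\]
For the $\limsup$ (recovery) inequality, I use $\rho$ itself as a competitor for each $\prox^{c_n}_{\tau G}$ problem: by minimality of $\rho_n$,
\[
-\lambda\rho_n(\Omega) + \tfrac{1}{2\tau} T_{c_n}(\mu,\rho_n) \leq -\lambda\rho(\Omega) + \tfrac{1}{2\tau} T_{c_n}(\mu,\rho) \to -\lambda\rho(\Omega) + \tfrac{1}{2\tau} T_c(\mu,\rho),
\]
where the convergence $T_{c_n}(\mu,\rho) \to T_c(\mu,\rho)$ is exactly Proposition \ref{prop:increasing} applied to the fixed pair $(\mu,\rho)$. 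Chaining the two displays, $\bar\rho$ achieves a value no larger than that of $\rho$ in the limit problem, hence $\bar\rho$ is a minimizer, hence $\bar\rho = \rho$ by uniqueness, and since the limit is independent of the subsequence the full sequence $\rho_n$ converges weakly to $\rho$.

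The main subtlety — and the step I would be most careful about — is the joint lower semicontinuity in the $\liminf$ argument: the second marginals of the competitors vary with $n$ (they are governed by $\rho_n$, which is itself only converging weakly), so I need the entropy functional $\Ent{\cdot}{\cdot}$ to be jointly weakly lower semicontinuous in both arguments, which it is, and I need to make sure the sequence of optimal plans $\gamma_n$ is tight. Tightness of $\{\gamma_n\}$ follows from tightness of their marginals: the first marginals are all $\leq \mu$ (by Theorem \ref{th:duality}(i), $\gamma_{n,1} = e^{-\alpha_n}\mu \leq \mu$) and the second marginals are $\leq \rho_n \leq 1$ on the bounded set $\Omega$, both uniformly tight, so by \cite[Prop. 2.10]{liero2015optimal} the plans are precompact. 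With this in hand the argument closes.
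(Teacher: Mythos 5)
Your proposal is correct and follows the same overall structure as the paper: extract a weak limit $\bar\rho$ via the uniform mass estimates, sandwich its energy between that of any competitor by fixing $m$ and taking $n\to\infty$ then $m\to\infty$, and close with uniqueness. The one genuine difference is in the $\liminf$ step: the paper takes $n\to\infty$ (for fixed $m$) by simply invoking Proposition \ref{prop:uniform}, which already gives weak \emph{continuity} of $\rho \mapsto T_{c_m}(\rho,\mu)$ for the Lipschitz cost $c_m$, whereas you re-derive the one-sided inequality at the level of optimal plans using joint weak lower semicontinuity of the relative entropy and l.s.c. of the cost integral. Your route is slightly longer but needs only semicontinuity rather than full continuity, so it would survive with weaker hypotheses on $c_m$; given that Proposition \ref{prop:uniform} is already established in the paper, however, the paper's shortcut is cleaner. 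One small caveat: your claim that $\gamma_{n,1}\le\mu$ via $\gamma_{n,1}=e^{-\alpha_n}\mu$ tacitly assumes $\alpha_n\ge 0$, which is not guaranteed by the dual constraint alone; but this is harmless, since on a bounded $\Omega$ the uniform mass bound on $\gamma_n$ (from Theorem \ref{th:duality}(iv) and the mass estimates) already gives tightness without any marginal domination.
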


\begin{proof} By Proposition \ref{prop:existence step} we know that $\{\rho_n\}$ have equi-bounded mass and in particular, up to subsequences,  $\rho_n \weakto \bar{\rho}$ which, in particular, will be supported on $\Omega$ and it is such that $\bar{\rho} \leq 1$. Fix $m \in \mathbb{N}$ and $n\geq m$; by the minimality of $\rho_n$ we know that for every $\nu$ we have
\begin{align*}  
\frac { T_{c_m} (\rho_n, \mu) }{2\tau} - \lambda \int_{\Omega} \rho_n 
& \leq  \frac { T_{c_n} (\rho_n, \mu) }{2\tau} - \lambda \int_{\Omega} \rho_n  \\
& \leq  \frac { T_{c_n} (\nu, \mu) }{2\tau} - \lambda \int_{\Omega} \nu  
\leq  \frac { T_c (\nu, \mu) }{2\tau} - \lambda \int_{\Omega} \nu.
\end{align*}
Taking now the limit as $n \to \infty$, using the continuity of $T_{c_m}$ (see Proposition \ref{prop:uniform}), we get
$$  - \lambda \int_{\Omega} \bar{\rho} + \frac { T_{c_m} (\bar{\rho}, \mu) }{2\tau}  \leq   - \lambda \int_{\Omega} \nu + \frac { T_c (\nu, \mu) }{2\tau}.$$
Now we can take the limit $m \to \infty$ and use that $T_{c_m} \uparrow T_c$ (see Proposition \ref{prop:increasing}) in order to get
$$  - \lambda \int_{\Omega} \bar{\rho} + \frac { T_{c} (\bar{\rho}, \mu) }{2\tau}  \leq   - \lambda \int_{\Omega} \nu + \frac { T_c (\nu, \mu) }{2\tau},$$
that is, $\bar{\rho}$ is a minimizer for the limit problem and so by the uniqueness $\bar{\rho}=\rho$.
\end{proof}

\begin{lemma}\label{lem:1step} Let us consider $\rho = \prox^{\clog}_{\tau G}(\mu)$. If $\Omega$ is a regular domain\footnote{we need that $\Omega$ is an $H^1$ extension domain, that is, there exists $C>0$ such that for every $f \in H^1(\Omega)$ there exists $\tilde{f}$ with $\tilde{f}|_{\Omega}=f$ and $\| \tilde{f}\|_{H^1( \mathbb{R}^d)} \leq C \| f \|_{H^1(\Omega)}$.} then there exists $p \in H^1(\Omega)$ that verifies $p \geq 0 $, $p(1-\rho)=0$, such that
$$ \int_{\Omega} (| \nabla p|^2 + 4(p-\lambda)^2 )\, d \rho \leq \frac{\dist^2(\rho, \mu)}{\tau^2}$$
and such that for all $f\in C^2(\Omega)$,
$$ \left|\int_{\Omega}  f \, d(\mu - \rho)  -  \tau \int_{\Omega} ( \nabla p \cdot \nabla f  + 4(p- \lambda) f ) \, d\rho \right| \leq  C\| f \|_{C^2}  \dist^2 (\rho, \mu). $$
\end{lemma}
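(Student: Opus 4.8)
The plan is to obtain Lemma~\ref{lem:1step} by combining the one-step optimality characterization of Proposition~\ref{prop:h1} with the infinitesimal estimate of Proposition~\ref{prop:stime}, passing from the Lipschitz costs $\cn$ to $\clog$ via the stability statements (Lemma~\ref{lem:stability} and Proposition~\ref{prop:increasing}(iii)). First, for fixed $n$, set $\rho_n=\prox^{\cn}_{\tau G}(\mu)$ and let $\phi_n$ be the optimal potential relative to $\rho_n$ given by Proposition~\ref{prop:h1}, so $\phi_n\le 2\tau\lambda$ and $\rho_n=1$ on $\{\phi_n<2\tau\lambda\}$. The natural candidate for the pressure is $p_n:=\lambda-\tfrac{1}{2\tau}\phi_n\ge 0$; the complementarity $\phi_n=2\tau\lambda$ wherever $\rho_n<1$ translates exactly into $p_n(1-\rho_n)=0$. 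Note $\nabla\phi_n=-2\tau\nabla p_n$ and $\phi_n=2\tau(\lambda-p_n)$, so the pair $(v,g)$ that appears in Proposition~\ref{prop:stime} becomes, after dividing by $2\tau$, precisely $(-\nabla p_n,4(\lambda-p_n))$ up to a sign bookkeeping; the $L^2$ estimate $\int(|\nabla\phi_n|^2+4\phi_n^2)\,d\rho_n\le 4T_{\cn}(\rho_n,\mu)$ then reads $\int(|\nabla p_n|^2+4(p_n-\lambda)^2)\,d\rho_n\le \tfrac{1}{\tau^2}T_{\cn}(\rho_n,\mu)$.

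Next I would pass to the limit $n\to\infty$. By Lemma~\ref{lem:stability}, $\rho_n\weakto\rho=\prox^{\clog}_{\tau G}(\mu)$, and $T_{\cn}(\rho_n,\mu)\to T_{\clog}(\rho,\mu)=\dist^2(\rho,\mu)$ by Proposition~\ref{prop:increasing} (together with Proposition~\ref{prop:uniform} for the measure-convergence part). The $L^2$ bound above is uniform in $n$, so the vector fields $(2(\lambda-p_n),-\nabla p_n)$ are bounded in $L^2(\rho_n)$; using Proposition~\ref{prop:increasing}(iii), $(\phi_n,\nabla\phi_n)\to(\phi,\nabla\phi)$ in $L^2$ along the marginals, which yields an optimal potential $\phi$ for $\clog$ relative to $\rho$, hence a limiting pressure $p=\lambda-\tfrac{1}{2\tau}\phi$ satisfying $p\ge 0$, $p(1-\rho)=0$, and the claimed $L^2$-energy inequality by lower semicontinuity of the $L^2$ norm under this convergence. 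The $H^1(\Omega)$ regularity of $p$ is where the hypothesis that $\Omega$ is an $H^1$-extension domain enters: since $\rho\le 1$ and $\rho=1$ on $\{p>0\}$ one has $\int_\Omega(|\nabla p|^2+p^2)\,dx \le \int_\Omega(|\nabla p|^2+4(p-\lambda)^2)\,d\rho + C_\lambda\rho(\Omega)<\infty$ up to harmless constants (using $p\le\lambda$), so $p\in H^1$ of the set $\{p>0\}$; one then extends by zero and invokes the extension property, or more simply observes $p\in H^1(\Omega)$ directly since $\nabla p=0$ a.e.\ on $\{\rho<1\}=\{p=0\}$.

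For the weak-formulation estimate, I would apply the second inequality of Proposition~\ref{prop:stime} to each $\cn$ with test function $f\in C^2(\Omega)$ (extending $f$ to $\R^d$ using the regularity of $\Omega$, at the cost of the constant $C$), obtaining
\[
\left|\int f\,d(\mu-\rho_n)+\tfrac12\int\nabla f\cdot\nabla\phi_n\,d\rho_n+2\int f\phi_n\,d\rho_n\right|\le 5\|f\|_{C^2}\,T_{\cn}(\rho_n,\mu);
\]
substituting $\phi_n=2\tau(\lambda-p_n)$, $\nabla\phi_n=-2\tau\nabla p_n$ and dividing through, the left side becomes $\big|\int f\,d(\mu-\rho_n)-\tau\int(\nabla p_n\cdot\nabla f+4(p_n-\lambda)f)\,d\rho_n\big|$. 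Letting $n\to\infty$, the weak convergence $\rho_n\weakto\rho$ handles the $\int f\,d(\mu-\rho_n)$ and $\int f\phi_n\,d\rho_n$ terms, the $L^2$ convergence of $(\phi_n,\nabla\phi_n)$ from Proposition~\ref{prop:increasing}(iii) handles $\int\nabla f\cdot\nabla\phi_n\,d\rho_n$, and $T_{\cn}(\rho_n,\mu)\to\dist^2(\rho,\mu)$ handles the right side, giving the desired inequality (with $C$ absorbing the factor $5$ and the extension constant). The main obstacle I anticipate is the bookkeeping in the limit passage for the terms involving $\rho_n$ and $\phi_n$ simultaneously: one needs that $\nabla f\cdot\nabla\phi_n\to\nabla f\cdot\nabla\phi$ not just in measure but strongly enough in $L^1(\rho_n)$ with the varying reference measures $\rho_n$, which is exactly what part (iii) of Proposition~\ref{prop:increasing} is tailored to provide — care is needed because the convergence there is stated relative to the fixed marginal $\mu_1$ of the transport problem (here $\rho_n$), so one must check the statements apply with $\mu_1=\rho_n$ varying, which follows from combining Lemma~\ref{lem:stability} with the stability of optimal plans and the uniform $L^2$ bounds.
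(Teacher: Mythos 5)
You correctly identify all the ingredients the paper uses — the approximated proximals $\rho_n=\prox^{\cn}_{\tau G}(\mu)$, the potentials $\phi_n$ from Proposition~\ref{prop:h1}, the candidate pressure $p_n=\lambda-\tfrac{1}{2\tau}\phi_n$, the two estimates of Proposition~\ref{prop:stime}, and the final limit $n\to\infty$. But your compactness step has a genuine gap, and it is precisely where the paper does something you did not anticipate. You propose to extract the limit pressure via Proposition~\ref{prop:increasing}(iii), claiming $(\phi_n,\nabla\phi_n)\to(\phi,\nabla\phi)$ in $L^2$. That proposition is proved for a \emph{fixed} pair of marginals with only the cost $c_n$ varying, whereas here the first marginal $\rho_n$ changes with $n$. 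You flag this at the end and assert it ``follows from combining Lemma~\ref{lem:stability} with the stability of optimal plans and the uniform $L^2$ bounds'', but this is exactly where the argument would stall: the proof of (iii) uses the optimal map relative to a fixed absolutely continuous first marginal, and even \emph{stating} an $L^2$ convergence of $(\phi_n,\nabla\phi_n)$ with varying reference measures $\rho_n$ requires care that the paper deliberately sidesteps.

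The paper's compactness mechanism is different and is the key idea you are missing. The complementarity $p_n(1-\rho_n)=0$ implies $p_n=0$ and $\nabla p_n=0$ a.e.\ on $\{\rho_n<1\}$, so the bound $\int(|\nabla p_n|^2+4(\lambda-p_n)^2)\,d\rho_n\le T_{\cn}(\rho_n,\mu)/\tau^2$ together with the uniform mass estimate from Proposition~\ref{prop:existence step}(ii) gives that $(p_n)$ is equibounded in $H^1(\Omega)$ with respect to \emph{Lebesgue measure}. This is where the $H^1$-extension-domain hypothesis actually enters — not in an ``extend-by-zero'' step as you suggest, but to get extensions $\tilde p_n$ equibounded in $H^1(\R^d)$ and hence (Rellich) a subsequence converging weakly in $H^1$ and strongly in $L^2$ to a limit $p$. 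The strong $L^2$ convergence of $p_n$, paired with $\rho_n\weakto\rho$ in duality with $L^1$ (from the $L^\infty$ bound), is then what gives $p(1-\rho)=0$ and lets one pass to the limit in \eqref{eq:stiman}--\eqref{eq:contn}. Your remark that $p\in H^1(\Omega)$ ``since $\nabla p=0$ on $\{p=0\}$'' is the right observation, but you apply it only after the limit; the paper applies its uniform version to the $p_n$, and that is the entire compactness argument. Finally, $T_{\cn}(\rho_n,\mu)\to\dist^2(\rho,\mu)$ is not a direct corollary of Propositions~\ref{prop:increasing} or~\ref{prop:uniform} since both cost and marginal vary; one needs the variational comparison of $\rho_n$ against $\rho$ in the $\cn$-proximal problem (plus $T_{\cn}\le T_{\clog}$) to get $\limsup_n T_{\cn}(\rho_n,\mu)\le \dist^2(\rho,\mu)$, which is what is really used.
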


\begin{proof} We first use the approximated problem $\rho_n = \prox^{c_n}_{\tau G} (\mu)$. By Lemma \ref{lem:stability} we know that $\rho_n \weakto \rho$.
%
Using Proposition \ref{prop:h1} we know there exists optimal potentials  $\phi_n$ such that, calling $p_n = \frac 1{2\tau} ( 2 \tau \lambda - \phi_n )$, we have $p_n \in H^1(\Omega)$, $p_n\geq 0$ and $p_n ( 1-\rho_n)=0$. Moreover, thanks to Proposition \ref{prop:stime} we have also that 
\begin{equation}\label{eq:stiman} 
\tau^2 \int_{\Omega} \left(|\nabla p_n|^2 + 4(\lambda -p_n)^2\right) dx =  \frac 14 \int \left( |\nabla \phi_n|^2 + 4|\phi_n|^2 \right) d \rho_n \leq T_{c_n} ( \rho_n, \mu)
\end{equation}
\begin{equation}\label{eq:contn} 
\left| \int_{\Omega} f \, d \mu  - \int_{\Omega} f \, d \rho_n + \tau \int_{\Omega}( -\nabla p_n \cdot \nabla f  + 4(\lambda -p_n)f)\, dx \right| \leq 5 \| f \|_{C^2} T_{c_n}( \rho_n, \mu)
\end{equation}
In particular, using Equations \eqref{eqn:lipest} and \eqref{eq:stiman}, we get that $p_n$ is equibounded in $H^1(\Omega)$. Thanks to the hypothesis on $\Omega$, there exist a sequence $\tilde{p}_n$ equibounded in $H^1(\mathbb{R}^d)$ such that $\tilde{p}_n|_{\Omega}=p_n$; in particular there is a subsequence of $\tilde{p}_n$ that is weakly converging in $H^1(\mathbb{R}^d)$ and strongly in $L^2$ to some $p \in H^1$, $p\geq 0$. Since we have  $\rho_n \weakto \rho$ in duality with $C_b$ and so also in duality with $L^1$, thanks to the $L^{\infty}$ bound, we get that $\int_{\Omega} p_n(1-\rho_n) \to \int_{\Omega} p (1-\rho)$ and so we have $\int_{\Omega} p (1-\rho) \, dx =0$ that implies $p(1-\rho)=0$ almost everywhere in $\Omega$, since $\rho \leq 1 $ and $p\geq 0$. Now we can pass to the limit both Equation \eqref{eq:stiman} and \eqref{eq:contn} getting the conclusion.
\end{proof}


\subsection{Convergence of minimizing movement}

We consider an initial density $\rho_0\in L^1_+(\Omega)$ and define the discrete gradient flow scheme as introduced in \eqref{eq:metricEulerScheme} which depends an a \emph{time step} $\tau>0$
\begin{equation}\label{eq:JKO discr}
\begin{cases} 
\rho_0^{\tau} = \rho_0 \in L^1_+(\Omega) \\
 \rho^\tau_{n+1} = \prox^{\clog}_{\tau G} ( \rho_n ) \quad \text{ for }n\geq1\, , 
\end{cases}
\end{equation}
define $p^{\tau}_{n+1}$ as the pressure relative to the couple $\rho^{\tau}_n, \rho^{\tau}_{n+1}$ (provided by Lemma \ref{lem:1step}) and extend all these quantities in a piecewise constant fashion as in Definition \ref{def:gradientflow} in order to obtain a family of time dependant curves $(\rho^\tau,p^\tau)$: 
\begin{equation}\label{eq:JKO interp}
\begin{cases} 
\rho^{\tau}(t) =\rho^{\tau}_{n+1} \\
p^{\tau}(t) =p^{\tau}_{n+1}\
\end{cases} 
 \text{ for } t \in ] \tau n , \tau (n+1)] 
\end{equation}
The next lemmas exhibit the regularity in time of $\rho^\tau$, which improves as $\tau$ diminishes. 
\begin{lemma}\label{lem:bounded squares}
There exists a constant $C>0$ such that for any $\tau>0$, the sequence of minimizers satisfy
\[
\sum_{k} \dist^2(\rho^\tau_k,\rho^\tau_{k+1}) \leq \tau C\, .
\]
\end{lemma}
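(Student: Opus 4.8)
The plan is to use the standard variational comparison inherent in the implicit Euler scheme (\ref{eq:JKO discr}) and telescope. Since $\rho^\tau_{k+1} = \prox^{\clog}_{\tau G}(\rho^\tau_k)$, it is in particular a minimizer, so comparing the value at $\rho^\tau_{k+1}$ with the value at the trivial competitor $\rho^\tau_k$ (which has $T_{\clog}(\rho^\tau_k,\rho^\tau_k)=0$) gives
\[
-\lambda \rho^\tau_{k+1}(\Omega) + \frac{\dist^2(\rho^\tau_k,\rho^\tau_{k+1})}{2\tau} \leq -\lambda \rho^\tau_k(\Omega).
\]
Rearranging, $\dist^2(\rho^\tau_k,\rho^\tau_{k+1}) \leq 2\tau\lambda\bigl(\rho^\tau_{k+1}(\Omega) - \rho^\tau_k(\Omega)\bigr)$, which is exactly estimate (ii) of Proposition \ref{prop:existence step} applied at each step. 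Summing over $k$ from $0$ to $N-1$, the right-hand side telescopes to $2\tau\lambda\bigl(\rho^\tau_N(\Omega) - \rho_0(\Omega)\bigr)$.

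It then remains to bound the total mass $\rho^\tau_N(\Omega)$ uniformly in $\tau$ and $N$ (as long as $N\tau \leq T$, i.e.\ on the fixed time interval $[0,T]$). For this I would iterate the mass growth estimate (i) of Proposition \ref{prop:existence step}, namely $\sqrt{\rho^\tau_{k+1}(\Omega)} \leq \frac{1+2\lambda\tau}{1-2\lambda\tau}\sqrt{\rho^\tau_k(\Omega)}$, to get $\sqrt{\rho^\tau_N(\Omega)} \leq \left(\frac{1+2\lambda\tau}{1-2\lambda\tau}\right)^N \sqrt{\rho_0(\Omega)}$. Since $\log\frac{1+2\lambda\tau}{1-2\lambda\tau} = 4\lambda\tau + O(\tau^3) \leq C_1 \tau$ for $\tau$ small, we have $\left(\frac{1+2\lambda\tau}{1-2\lambda\tau}\right)^N \leq e^{C_1 N\tau} \leq e^{C_1 T}$, so $\rho^\tau_N(\Omega) \leq e^{2C_1 T}\rho_0(\Omega) =: M$ uniformly. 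Plugging back, $\sum_{k=0}^{N-1}\dist^2(\rho^\tau_k,\rho^\tau_{k+1}) \leq 2\tau\lambda M$, which is the claimed bound with $C = 2\lambda M$ (absorbing the $-\rho_0(\Omega)$ term into $M$).

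I do not expect any serious obstacle here: the lemma is essentially a repackaging of the two estimates already proved in Proposition \ref{prop:existence step}, together with an elementary Gronwall-type iteration to make the mass bound uniform over the (bounded number of) steps. The only mild subtlety is to make sure the iteration of estimate (i) is legitimate at each step — this requires that each $\rho^\tau_k$ is admissible as an input to $\prox^{\clog}_{\tau G}$, i.e.\ absolutely continuous with density $\leq 1$, which is guaranteed inductively since the output of the proximal operator always satisfies $\rho \leq 1$ (it is a feasible point of (\ref{eq:onestep})) and is absolutely continuous by Lemma \ref{lem:1step} (or directly from the explicit form $\rho(x) = \min\{1,\gamma_2(x)/(1-2\tau\lambda)\}$). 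One should also note the constant $C$ is allowed to depend on $T$, $\lambda$ and $\rho_0(\Omega)$ but not on $\tau$, which is all that is needed for the subsequent compactness arguments.
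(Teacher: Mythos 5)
Your proof is correct and the core idea (optimality comparison with the trivial competitor $\rho^\tau_k$, then telescoping) is exactly the paper's argument. However, for the final mass bound you take a noticeably longer route than necessary, and your version of the bound is slightly weaker than what the lemma asserts. After telescoping you obtain $\sum_{k=0}^{N-1}\dist^2(\rho^\tau_k,\rho^\tau_{k+1}) \leq 2\tau\lambda(\rho^\tau_N(\Omega)-\rho_0(\Omega))$, and then iterate estimate~(i) of Proposition~\ref{prop:existence step} with a Gronwall-type argument, which yields a constant depending on the time horizon $T$. But there is a much simpler and sharper observation available, which is the one the paper uses: every iterate satisfies the hard constraint $\rho^\tau_k \leq 1$ (it is feasible for the proximal problem), and $\Omega$ is bounded, so $\rho^\tau_k(\Omega) \leq |\Omega|$ for all $k$ with no iteration needed. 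Equivalently, $G$ is bounded below on its domain by $\inf G = -\lambda|\Omega|$, so the telescoped sum $2\tau\bigl(G(\rho_0)-G(\rho^\tau_N)\bigr)$ is bounded by $2\tau\bigl(G(\rho_0)-\inf G\bigr)$, uniformly in $N$. This gives the constant $C = 2\lambda|\Omega|$, independent of $T$, and thereby bounds the full infinite sum $\sum_k$ as stated in the lemma, not merely the partial sum up to time $T$. You actually mention the key fact $\rho^\tau_k \leq 1$ in your last paragraph when discussing admissibility, but do not exploit it for the mass bound; doing so would collapse the Gronwall step and give the cleaner conclusion.
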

\begin{proof}
By optimality, $\rho_{k+1}^\tau$ satisfies
$
\dist^2(\rho^\tau_{k},\rho^\tau_{k+1}) \leq  2\tau\left( G(\rho^\tau_{k})-G(\rho^\tau_{k+1}) \right) .
$
By summing over $k$, one obtains a telescopic sum which is upper bounded by $2\tau (G(\rho_0)-\inf G)$ and $\inf G = -\lambda|\Omega|$ is finite because $\Omega$ has a finite Lebesgue measure.
\end{proof}

The consequence of this bound is a H\"older property, a standard result for gradient flows.
\begin{lemma}[Discrete H\"older property]\label{lem:discrete holder}
Let $\rho_0\leq1$ and $T>0$. There exists a constant $C>0$ such that for all $\tau>0$ and $s,t\geq 0$, it holds
\[
\dist(\rho^{\tau}_t , \rho^{\tau}_s ) \leq C(\tau+ |t-s|)^{1/2}\, .
\]
In particular, if $(\tau_n)_{n\in\mathbb{N}}$ converges to $0$, then, up to a subsequence, $\rho^{\tau_n}$ weakly converges to a $\frac12$-H\"older curve $\rho$. 
\end{lemma}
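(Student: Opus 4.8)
The plan is to derive the Hölder bound from the sum estimate of Lemma \ref{lem:bounded squares} by a discrete Cauchy--Schwarz argument, and then obtain the limiting curve by a compactness argument (Ascoli--Arzelà adapted to metric-space-valued curves). First, fix $s,t \ge 0$ with $s \le t$, and let $i = \lfloor s/\tau\rfloor$, $j = \lfloor t/\tau\rfloor$, so that $\rho^\tau_s = \rho^\tau_i$ and $\rho^\tau_t = \rho^\tau_j$ by the piecewise-constant definition \eqref{eq:JKO interp}. By the triangle inequality for $\dist$ (which is a genuine metric by Proposition \ref{prop:metric properties}) and then the Cauchy--Schwarz inequality,
\[
\dist(\rho^\tau_s,\rho^\tau_t) \le \sum_{k=i}^{j-1} \dist(\rho^\tau_k,\rho^\tau_{k+1}) \le \sqrt{j-i}\,\Big(\sum_{k=i}^{j-1} \dist^2(\rho^\tau_k,\rho^\tau_{k+1})\Big)^{1/2}.
\]
By Lemma \ref{lem:bounded squares} the second factor is at most $\sqrt{\tau C}$, and $j-i \le |t-s|/\tau + 1$, hence $\dist(\rho^\tau_s,\rho^\tau_t) \le \sqrt{C}\,\sqrt{(|t-s|/\tau+1)\tau} = \sqrt{C}\,(|t-s|+\tau)^{1/2}$, which is the claimed estimate (up to relabeling the constant). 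Note this uses only $\rho_0 \le 1$ through the finiteness of $G(\rho_0)$ in Lemma \ref{lem:bounded squares}.

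For the second assertion, fix $T>0$. The estimate just proved shows that the curves $\{\rho^{\tau_n}\}$, restricted to $[0,T]$, are uniformly $\tfrac12$-Hölder \emph{up to the additive error} $\tau_n^{1/2}$, which vanishes; in particular they are uniformly (asymptotically) equicontinuous as maps into the complete metric space $(\mathcal{M}_+(\Omega),\dist)$. Moreover Proposition \ref{prop:existence step}(i) gives a uniform bound on the total masses $\rho^{\tau_n}_k(\Omega)$ on $[0,T]$ (iterating the contraction-type estimate finitely many times), so all the measures $\rho^{\tau_n}(t)$ for $t\in[0,T]$ live in a fixed bounded subset of $\mathcal{M}_+(\Omega)$, which is weakly precompact since $\Omega$ is bounded; by Proposition \ref{prop:metric properties} this set is precompact for $\dist$ as well. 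A refined Ascoli--Arzelà theorem for curves valued in a metric space (e.g. \cite[Prop. 3.3.1]{ambrosio2008gradient}) then yields a subsequence $\rho^{\tau_n}$ converging pointwise on $[0,T]$ — and in fact uniformly — to a limit curve $\rho:[0,T]\to\mathcal{M}_+(\Omega)$. Passing to the limit in $\dist(\rho^{\tau_n}_s,\rho^{\tau_n}_t) \le \sqrt C (|t-s|+\tau_n)^{1/2}$ using the continuity of $\dist$ shows $\dist(\rho_s,\rho_t)\le \sqrt C |t-s|^{1/2}$, i.e. $\rho$ is $\tfrac12$-Hölder. A diagonal argument over $T\to\infty$ extends the curve to all of $\R_+$.

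The only mildly delicate point is the compactness step: one must check that the family $\{\rho^{\tau_n}(t) : n\in\mathbb{N},\ t\in[0,T]\}$ is relatively compact in $(\mathcal{M}_+(\Omega),\dist)$, which I would get from the uniform mass bound plus the boundedness of $\Omega$ (tightness is automatic) together with Proposition \ref{prop:metric properties} identifying $\dist$-convergence with weak convergence on bounded mass sets. Everything else is the standard ``$\sum d^2 \le C\tau \Rightarrow$ Hölder'' computation and the metric Ascoli--Arzelà theorem, which I would simply cite. I do not expect any real obstacle here.
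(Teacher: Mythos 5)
Your argument follows the same two-step strategy as the paper: Cauchy--Schwarz combined with Lemma~\ref{lem:bounded squares} for the discrete H\"older bound, then a compactness argument of Arzel\`a--Ascoli type for the limit curve. The first half is identical (a small indexing slip: with the interpolation rule $\rho^\tau(t)=\rho^\tau_{n+1}$ for $t\in{]n\tau,(n+1)\tau]}$, one should take $i=\lceil s/\tau\rceil$ rather than $\lfloor s/\tau\rfloor$, but this has no effect on the final estimate). In the second half, the paper does \emph{not} invoke a packaged metric Arzel\`a--Ascoli theorem such as \cite[Prop.~3.3.1]{ambrosio2008gradient}; instead it re-runs the diagonal argument over a dense set of rationals and then uses the discrete estimate $\dist(\rho^{\tau}_t,\rho^{\tau}_s)\leq C(\tau+|t-s|)^{1/2}$ to propagate convergence to all $t$. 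This is precisely because the curves $\rho^{\tau_n}$ are piecewise constant and therefore \emph{not} equicontinuous: the standard Arzel\`a--Ascoli for curves (which requires a genuine, $\tau$-independent modulus of continuity) does not apply as stated, and the ``asymptotic'' equicontinuity you acknowledge parenthetically is exactly the gap it does not cover. You would either have to reproduce the paper's rational-diagonalization argument, or first replace $\rho^{\tau_n}$ by a genuinely $\tfrac12$-H\"older reparametrization before citing the reference. Also, the mass bound you invoke from iterating Proposition~\ref{prop:existence step}(i) is not needed: since $\rho^{\tau}_k\leq 1$ at every step and $\Omega$ is bounded, the family lies in a fixed weakly compact set for free, which is how the paper gets precompactness at each rational time.
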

\begin{proof} The first result is direct if $s$ and $t$ are in the same time interval $]\tau k,\tau(k+1)]$ so we suppose that $s-t \geq \tau$ and let $k,l$ be such that $t\in ]\tau (k-1) , \tau k]$ and $s \in ]\tau (l-1), \tau l]$.
By the triangle inequality and the Cauchy-Schwarz inequality, one has
\[
\dist(\rho^{\tau}(t),\rho^{\tau}(s)) \leq \sum_{i=k}^{l-1} \dist(\rho^{\tau}_i,\rho^{\tau}_{i+1}) 
\leq \left( \sum_{i=k}^{l-1} \dist(\rho^{\tau}_i,\rho^{\tau}_{i+1})^2\right)^{\frac12} \left( l-k\right)^{\frac12}\, .
\]
By using Lemma \ref{lem:bounded squares} and the fact that $|l-k|\leq 1+ |s-t|/\tau$, the first claim follows.

As for the second claim, let us adapt the proof of Arzel\`a-Ascoli theorem to this discontinuous setting. Let $(q_i)_{i\in\mathbb{N}}$ be a enumeration of $\mathbb{Q} \cap [0,T]$ and let $\tau_k\to 0$. Since $\{\rho^{\tau_k}(q_1)\}$ is bounded in $L^{\infty}\cap L^1(\Omega)$, it is weakly pre-compact and thus there is a subsequence $\tau_{k^{(1)}}$ such that $\rho^{\tau_{k^{(1)}}}(q_1)$ converges. By induction, for any $i\in \mathbb{N}$, one can extract a subsequence $\tau_{k^{(i)}}$ from $\tau_{k^{(i-1)}}$ so that $\rho^{\tau_{k^{(i)}}}(q_i)$ converges. 

Now, we form the diagonal subsequence $(\rho^m)_{m\in \mathbb{N}}$ whose $m$-th term is the $m$-th term in the $m$-th subsequence $\rho^{\tau_{k^{(m)}}}$. By construction, for every rational $q_i \in [0,T]$, $\rho^m(q_i)$ converges. Moreover, for every $t\in [0,T]$ and for $q \in [0,T]\cap \mathbb{Q}$
\[
\dist(\rho_n(t),\rho_m(t)) \leq C(\tau_{k^{(n)}} +|t-q|)^{\frac12} +  \dist(\rho_n(q),\rho_m(q)) + C(\tau_{k^{(m)}} +|t-q|)^{\frac12}\,
\]
by the triangle inequality and the discrete H\"older property. So by taking $q$ close enough to $t$, one sees that $(\rho_m(t))_{m\in \mathbb{N}}$ is Cauchy and thus converges. Let us denote $\rho$ the limit. For $0\leq s \leq t \leq T$, and $m\in \mathbb{N}$, it holds
\[
\dist(\rho(t),\rho(s)) \leq \dist(\rho(t),\rho_m(t)) + \dist(\rho_m(t),\rho_m(s)) + \dist(\rho_m(s),\rho(s)) \, .
\]
In the right-hand side, the middle term is upper bounded by $C(\tau_{k^{(m)}}+|s-t|)^\frac12 \to C|s-t|^{\frac12} $ and the other terms tend to $0$ so by taking the limit $m\to \infty$, one obtains the $\frac12$-H\"older property.
\end{proof}

Collecting all the estimates established so far, we obtain an existence result.
\begin{proposition}[Existence of solutions]\label{prop:gradientflowexistence}
The family $(\rho^\tau,p^\tau)$ defined  in \eqref{eq:JKO interp} admits weak cluster points $(\rho,p)$ as $\tau\downarrow 0$ which are solutions to the evolution PDE \eqref{eq:mainPDE} on ${[0,T]}$, for all $T>0$.
\end{proposition}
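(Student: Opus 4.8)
The plan is to pass to the limit $\tau \downarrow 0$ in the discrete estimates of Lemma \ref{lem:1step}, which are already written in a form that survives the limit. First I would fix $T>0$ and restrict attention to $[0,T]$. By Lemma \ref{lem:discrete holder}, along a subsequence $\tau_n \to 0$ the curves $\rho^{\tau_n}$ converge weakly (in duality with $C_b(\Omega)$, for each fixed $t$) to a $\tfrac12$-H\"older curve $\rho$ with $\rho_t \leq 1$ and $\rho_0$ as initial datum. For the pressures, Lemma \ref{lem:1step} gives, for each step, $\tau^2 \int_\Omega (|\nabla p^\tau_{k+1}|^2 + 4(\lambda - p^\tau_{k+1})^2)\,d\rho^\tau_k \leq \dist^2(\rho^\tau_{k+1},\rho^\tau_k)$; combined with the bound $\rho^\tau_k \leq 1$ and Lemma \ref{lem:bounded squares} this yields $\sum_k \int_\Omega (|\nabla p^\tau_{k+1}|^2 + 4(\lambda-p^\tau_{k+1})^2)\,dx \leq C/\tau$, i.e.\ $\int_0^T \|p^\tau(t)\|_{H^1(\Omega)}^2\,dt \leq C$ uniformly in $\tau$ (using the $H^1$-extension property of $\Omega$ to control the full $H^1$ norm, as in Lemma \ref{lem:1step}). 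Hence, up to a further subsequence, $p^{\tau_n} \rightharpoonup p$ weakly in $L^2([0,T],H^1(\Omega))$ with $p \geq 0$.

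Next I would establish the weak formulation. Fix $\phi \in C^\infty_c(\bar\Omega)$ and $0\leq a < b \leq T$. Summing the second inequality of Lemma \ref{lem:1step} over the steps $k$ with $\tau k \in (a,b]$ gives
\[
\Bigl| \int_\Omega \phi\,d\rho^\tau(b) - \int_\Omega \phi\,d\rho^\tau(a) - \sum_k \tau \int_\Omega (-\nabla p^\tau_{k+1}\cdot\nabla\phi + 4(\lambda-p^\tau_{k+1})\phi)\,d\rho^\tau_k \Bigr| \leq C\|\phi\|_{C^2}\sum_k \dist^2(\rho^\tau_{k+1},\rho^\tau_k),
\]
and the right-hand side is $\leq C\|\phi\|_{C^2} \cdot C\tau \to 0$ by Lemma \ref{lem:bounded squares}. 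The sum in the middle is exactly $\int_a^b \int_\Omega (-\nabla p^\tau(t)\cdot\nabla\phi + 4(\lambda - p^\tau(t))\phi)\,d\rho^\tau(t)\,dt$ up to the at-most-one-step discrepancy at the endpoints $a,b$, which is $O(\tau)$ since the integrand is bounded in $L^1_t$ uniformly (by the $L^2_tH^1$ bound on $p^\tau$ and $\rho^\tau \leq 1$). Passing to the limit: the boundary terms converge by weak convergence of $\rho^{\tau_n}(t)$ for each fixed $t$; for the bulk term I would use that $p^{\tau_n}\rightharpoonup p$ in $L^2([0,T],H^1)$ while $\rho^{\tau_n}\to\rho$ — here one must be slightly careful because this is a product of two weakly-converging objects. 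The resolution is that $\rho^{\tau_n}\to\rho$ strongly in a suitable sense: $\rho^{\tau_n}(t)\to\rho_t$ weak-$*$ in $L^\infty(\Omega)$ for each $t$ with uniform $L^\infty$ bound, and the H\"older-in-time estimate gives enough time-equicontinuity that $\rho^{\tau_n}\to\rho$ strongly in $L^2([0,T],H^{-1}(\Omega))$ (or in $C([0,T],w\text{-}\mathcal M_+)$), which against the weak $L^2_tH^1$ convergence of $\nabla p^{\tau_n}$ and $p^{\tau_n}$ passes to the limit. Thus for every $\phi\in C^\infty_c(\bar\Omega)$ and a.e.\ $a,b$,
\[
\int_\Omega \phi\,d\rho_b - \int_\Omega \phi\,d\rho_a = \int_a^b \int_\Omega (-\nabla p(t)\cdot\nabla\phi + 4(\lambda - p(t))\phi)\,d\rho_t\,dt,
\]
which is precisely the weak/distributional form of $\partial_t\rho - \nabla\cdot(\rho\nabla p) = 4(\lambda-p)\rho$ with no-flux boundary conditions (and, since the density constraint $\rho_t\leq1$ passes to the limit by weak lower semicontinuity / weak-$*$ closedness of $\{\rho\leq1\}$, we get $4(\lambda-p)_+$ once the graph constraint is in place).

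Finally I would recover the Hele-Shaw graph relation $p(1-\rho)=0$ and $p\geq0$: each $p^\tau_{k+1}\geq0$ and $p^\tau_{k+1}(1-\rho^\tau_{k+1})=0$ by Lemma \ref{lem:1step}, so $\int_0^T\int_\Omega p^\tau(t)(1-\rho^\tau(t))\,dx\,dt = 0$; since $p^{\tau_n}\rightharpoonup p$ in $L^2_tL^2_x$ and $\rho^{\tau_n}\to\rho$ strongly in $L^2_tL^2_x$ (using $\rho^{\tau_n}\to\rho$ pointwise-in-$t$ weak-$*$ in $L^\infty$ plus dominated convergence, or the $H^{-1}$ strong convergence combined with the $L^\infty$ bound to upgrade), the product passes to the limit and $\int_0^T\int_\Omega p(1-\rho) = 0$; with $p\geq0$ and $\rho\leq1$ this forces $p(1-\rho)=0$ a.e., hence also $4(\lambda-p)\rho = 4(\lambda-p)_+\rho$ on $\{\rho>0\}$ and the term is unaffected where $\rho=0$, so the PDE is of the form \eqref{eq:mainPDE}. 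The main obstacle is the step I flagged: justifying the passage to the limit in the product $\int p^{\tau_n}\,d\rho^{\tau_n}$ (and in $\int\nabla p^{\tau_n}\cdot\nabla\phi\,d\rho^{\tau_n}$), since both factors converge only weakly; this requires extracting genuine strong compactness of $(\rho^\tau)$ in time (an Aubin–Lions / Arzel\`a–Ascoli type argument, already essentially carried out in Lemma \ref{lem:discrete holder}) and exploiting the uniform $L^\infty$ bound on the densities to trade weak-$*$ convergence for strong $L^2$ convergence against the $L^2$-bounded pressures.
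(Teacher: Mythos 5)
Your proof identifies the right estimates and correctly flags the genuine difficulty (the weak--weak product of $\rho^{\tau}$ and $p^{\tau}$ or $\nabla p^{\tau}$), but the proposed resolution of that difficulty does not work, and it is precisely here that the paper takes a different route.

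Your claim that $\rho^{\tau_n}\to\rho$ \emph{strongly} in $L^2([0,T],L^2(\Omega))$ is false with the estimates at hand. Weak-$*$ $L^\infty$ convergence of $\rho^{\tau_n}(t)$ for each $t$ does not upgrade to strong $L^2$ convergence by dominated convergence (that only controls the time variable, not spatial oscillation), and strong convergence in $L^2_tH^{-1}_x$ combined with a uniform $L^\infty$ bound also does not imply strong $L^2_tL^2_x$ convergence: take $\rho_n(x)=\tfrac12(1+\sin(nx))$, which is bounded in $L^\infty$, converges to $\tfrac12$ strongly in $H^{-1}$, yet does not converge strongly in $L^2$. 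The densities here are indicator-like and can a priori oscillate at scale $\tau$, so no spatial compactness is available. Consequently the pairing $\int_0^T\!\int_\Omega \nabla\phi\cdot\nabla p^{\tau_n}\,\rho^{\tau_n}$ cannot be passed to the limit by a weak--strong argument: $\nabla p^{\tau}$ is bounded only in $L^2_tL^2_x$, not in $L^2_tH^1_x$, so the $L^2_tH^{-1}_x$ strong convergence of $\rho^{\tau}$ that you do legitimately have does not pair with it.

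The paper circumvents this obstacle entirely by exploiting the complementarity relation already at the discrete level. Since $p^{\tau}(1-\rho^{\tau})=0$ a.e., one has $\rho^{\tau}p^{\tau}=p^{\tau}$ and also $\rho^{\tau}\nabla p^{\tau}=\nabla p^{\tau}$ a.e.\ (because $\nabla p^{\tau}=0$ a.e.\ on $\{p^{\tau}=0\}$). Hence $E^{\tau}=-\rho^{\tau}\nabla p^{\tau}=-\nabla p^{\tau}$ and $D^{\tau}=4(\lambda-p^{\tau})\rho^{\tau}=4(\lambda\rho^{\tau}-p^{\tau})$ are \emph{linear} in $(\rho^{\tau},p^{\tau})$, so weak convergence passes to the limit immediately and gives $\partial_t\rho+\nabla\cdot E=D$ with $E=-\nabla p$ and $D=4(\lambda\rho-p)$. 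The nonlinearity is then reinjected at the very end: one proves $p(1-\rho)=0$ in the limit separately, and this is done not by strong convergence of the density but by a time-averaging argument --- freeze $\rho^{\tau}$ at time $a$, exploit that the time-averaged $p^{\tau}_{[a,b]}$ converges strongly in $L^2_{\mathrm{loc}}$ by Rellich, and control the commutator $\int_a^b\!\int p^{\tau}_t(\rho^{\tau}_a-\rho^{\tau}_t)$ via Lemma \ref{lem:H-1}, which compares $\dist$ with the $H^{-1}$ norm, together with the discrete H\"older estimate. This is the missing ingredient in your argument; without it, or the linearization trick, the limit passage in the nonlinear terms is not justified.
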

\begin{proof}
Define the sequence of momentums $E^{\tau}_{n} = -\rho^\tau_{n} \nabla p^\tau_{n}$ and source terms $D^\tau_n= 4\rho^\tau_{n} ( \lambda - p^\tau_n)$ and extend these quantities in a piecewise constant fashion as in \eqref{eq:JKO interp}. Gathering the results, let us first show that there exists a constant  $C=C(T, \rho_0)$ such that:
\begin{itemize}
\item[(i)] $\rho_t^{\tau} p_t^{\tau} = p_t^{\tau}$ and  $p_t^{\tau} \geq 0$;
\item[(ii)] $\int_0^T \int_\Omega (|\nabla p^\tau|^2 + |p^\tau|^2)dx dt \leq \int_0^T \int_{\Omega} (| E^{\tau}_t |^2 + |D^{\tau}_t|^2 ) \, dx \, dt \leq C$;
\item[(iii)] $\left| \int_{\Omega} \phi\, d  \rho^{\tau}_t  - \int_{\Omega} \phi \, d \rho^{\tau}_s  - \int_s^t \int_{\Omega}( E^{\tau}_r \cdot \nabla \phi + D^{\tau}_r \phi ) \,dx \, dr\right| \leq C\Vert \phi \Vert_{C^2} \max\{\tau,\sqrt{\tau}\}$, for all $\phi\in C^2(\Omega)$;
\item[(iv)] $\int_0^T\int_\Omega |\nabla p^\tau| dx dt < C$.
\end{itemize}

Property (i) is a direct from Lemma \ref{lem:1step} and the definition of the curves $p^\tau$ and $\rho^\tau$. One then proves (ii) and (iv) by using Lemma \ref{lem:1step} and property (i). Indeed, one has 
\[
\int_{\Omega} (|E^{\tau}_n|^2 +|D^\tau_n|^2)dx  \leq \int_{\Omega} (|\nabla p_n^\tau|^2+4(\lambda-p_n)^2)d\rho_n\leq \frac1{\tau^2} \dist^2(\rho^\tau_{n-1},\rho^\tau_{n})
\]
Integrating now the interpolated quantities it follows, by Lemma \ref{lem:bounded squares},
\[
\int_0^T\int_\Omega (|E^{\tau}_t|^2 +|D^\tau_t|^2)dxdt 
\leq \sum_{n=1}^{\lceil \frac T\tau \rceil} \tau \int_{\Omega} (|E^{\tau}_n|^2 +|D^\tau_n|^2)dx
\leq \frac1{\tau} \sum_n \dist^2(\rho^\tau_{n-1},\rho^\tau_{n}) \leq C\, .
\]
Property (iii) is obtained from Lemma \ref{lem:1step} in a similar way. Indeed, for all $\phi\in C^2(\Omega)$, by denoting $I_a^b = \int_a^b \int_\Omega (E^\tau_r \nabla \phi +D^\tau_r \phi) dx dr$, 
\begin{align*}
\left\vert \int_\Omega \phi (\rho^\tau_t - \rho^\tau_s )dx - I_s^t  \right\vert &=
 \Big\vert  I_s^{k\tau} - I_{t}^{l\tau}+ \\
 &  \sum_{i=k}^{l-1} \left[ \int_\Omega \phi (\rho^\tau_{i+1} - \rho^\tau_i)dx - \tau \int_{\Omega} (E^\tau_{i+1} \nabla \phi +D^\tau_{i+1} \phi)dx \right] \Big\vert \\
& \leq |I_s^{k\tau}| +|I_{kl}^t| + C\Vert \phi \Vert_{C^2} \sum_{i=k}^{l-1} \dist^2(\rho_i^\tau,\rho_{i+1}^\tau)
\end{align*}
where $k=\lceil \frac s\tau\rceil$ and $l=\lceil \frac t\tau\rceil$. By Lemma \ref{lem:bounded squares}, the last term is bounded by $C\tau$ and by Lemma \ref{lem:1step}, $|I_s^{k\tau}|$ and $|I_{t}^{l\tau}|$ are controlled by $C\sqrt{\tau}$ :
$$
|I_s^{k\tau} | \leq \tau |\int_{\Omega} (E_k^\tau\nabla \phi + D_k^\tau \phi)dx | \leq \Vert \phi \Vert_{H^1(\Omega)} \dist (\rho_{k-1},\rho_{k}) \leq C\Vert \phi \Vert_{H^1(\Omega)} \sqrt{\tau}.
$$
So property (iii) is shown.

Let us now take a sequence $\tau_k\to 0$ and pass those relations to the limit. Recall that from the discrete H\"older property (Lemma \ref{lem:discrete holder}), up to a subsequence, $(\rho^{\tau_k})$ admits a weakly continuous limit $(\rho_t)_{t\in [0,T]}$. Moreover, thanks to the $L^2$-norm bound (ii) we have, up to a subsequence $( E^{\tau_k} , D^{\tau_k} ) \rightharpoonup ( E, D)$. In particular, looking at relation (iii), we obtain, for all $\phi \in C^2(\Omega)$,
\[
\int_\Omega \phi d \rho_t -\int_\Omega \phi d \rho_s = \int_s^t \int_\Omega (E_r \cdot \nabla \phi + D_r \phi)dx dr\, .
\]
which means that $(\rho,E,D)$ is a weak solution of $ \partial_t \rho_t + \nabla \cdot  E_t  = D_t$.

In order to conclude it remains to prove that $D= 4(\lambda - p) \rho$ and $E=-\rho\nabla p$ for some admissible pressure field $p$.  As $(p^\tau)$ is a bounded family  in the Hilbert space $L^2([0,1],H^1(\Omega))$, there exist weak limits $p$ when $\tau\to 0$. The property $p\geq0$ is obvious but the Hele-Shaw complementary relation $p(1-\rho)=0$ is more subtle. We obtain it by combining the spatial regularity of $p^\tau$ with the time regularity of $\rho^\tau$ as was done for the Wasserstein case in \cite{maury2010macroscopic}. Using the complementary relation $p^\tau(1-\rho^\tau)=0$ one has for all $0<a<b<T$:
\[
0 = \frac1{b-a} \int_a^b \int_{\Omega} p^\tau_t(x)(1-\rho^\tau_a(x))dx dt + \frac1{b-a} \int_a^b \int_{\Omega} p^\tau_t(x)(\rho^\tau_a(x)-\rho^\tau_t(x))dx dt \, .
\]
Denoting $p_{[a,b]} := \int_a^b p_t dt$, the first term converges to $\int_\Omega p_{[a,b]}(x)(1-\rho_a(x))dx$ because $p^\tau_{[a,b]}$ converges to $p_{[a,b]}$ --- weakly in $H^1(\Omega)$ and thus strongly in $L_{loc}^2(\Omega)$ since $\Omega$ bounded --- and $\rho^\tau_a$ converges weakly to $\rho_a$ in duality with $L^1(\Omega)$. Additionally, for every Lebesgue point $a$ of $t \mapsto p_t$ (seen as a map in the separable Hilbert space $L^2(\Omega)$) we have
\[
\int_\Omega p_{[a,b]}(x)(1-\rho_a(x))dx dt \xrightarrow[b\to a]{} \int_{\Omega} p_a(x)(1-\rho_a(x))dx\, .
\]
For the second term, we use Lemma \ref{lem:H-1} (stated below) and obtain
\begin{align*}
\int_a^b \int_{\Omega} p^\tau_t(x)(\rho^\tau_a(x)-\rho^\tau_t(x))dx dt 
&\leq 2 \int_a^b \Vert  p^\tau_t\Vert_{H^1(\R^d)} \dist(\rho^\tau_a,\rho^\tau_t)dt \\
& \leq C\sqrt{\tau+(b-a)} \left(\int_a^b  \Vert p^\tau_t \Vert^2_{H^1(\R^d)} dt \right)^\frac12 \left(\int_a^b dt \right)^\frac12\\
& \leq C(b-a)\sqrt{1+\tau/(b-a)} \left(\int_a^b \Vert  p^\tau_t \Vert^2_{H^1(\R^d)} dt \right)^\frac12\, .
\end{align*}
Notice that since the geodesics used in Lemma \ref{lem:H-1} may exit the domain $\Omega$ we have to use the $H^1$ norm of $p^{\tau(t, \cdot)}$ on the whole $\R^d$, in the sense that we extend it, and thanks to the regularity of $\Omega$ we have $\| p^{\tau}(t, \cdot)\|_{H^1(\R^d)} \leq C \| p^{\tau}(t, \cdot) \|_{H^1(\Omega)}$. In this way the functions $t\mapsto \Vert  p^\tau(t,\cdot)\Vert^2_{H^1(\R^d)}$ are $\tau$-uniformly bounded in $L^1([0,1])$ and so admit a weak cluster point $\sigma \in \mathcal{M}_+([0,T])$ as $\tau\to 0$. Thus, for a.e. $a\in[0,T]$,
$$
\lim_{\tau \to 0} \frac1{b-a} \int_a^b \int_\Omega p^\tau_t(x)(\rho^\tau_a(x)-\rho^\tau_t(x))dx dt \leq C \sqrt{\sigma([a,b])} \xrightarrow[b\to a]{} 0\, .
$$
As a consequence, for a.e.\ $a$, $\int_\Omega p_a(x)(1-\rho_a(x)) dx = 0$, and since $p\geq0$ and $\rho \leq 1$, this implies $p(1-\rho)=0$ a.e. 

We are finally ready to recover the expressions for $E$ and $D$ by writing this quantities as linear functions of $p$ and $\rho$ which are preserved under weak convergence and then plugging the nonlinearities back using $p(1-\rho)=0$. For $D^\tau \rightharpoonup D$ on has 
\[
D^\tau = 4(\lambda-p^\tau)\rho^\tau = 4(\lambda \rho^\tau - p^\tau) \underset{\tau\to 0}\rightharpoonup 4(\lambda \rho - p) = 4(\lambda - p)\rho = D,
\]
while for $E^\tau\rightharpoonup E$ one has
\[
E^\tau = -\rho^\tau \nabla p^\tau = -\nabla p^\tau \underset{\tau\to 0}\rightharpoonup -\nabla p = -\rho \nabla p = E\,.\qedhere
\]
\end{proof}

In the proof, we used the following Lemma which is well-known for the case of Wasserstein distances, and illustrates a link between $\dist$ and $H^{-1}$ norms. Its proof is a simple adaptation of the Wasserstein case, given the geodesic convexity result from \cite{liero2017convexity}. Notice that, as in the Wasserstein case, this Lemma can be generalized to the case where $L^p$ bounds on the measures imply a comparison between $\dist$ and the $W^{-1,q}$ norm, where $\frac 1p + \frac 2q=1$. 
\begin{lemma}\label{lem:H-1}
Let $(\mu,\nu)\in \mathcal{M}_+(\R^d)$ be absolutely continuous measures with density bounded by a constant $C$. Then, for all $\phi \in H^1(\R^d)$, it holds
\[
\int_{\R^d} \phi d (\mu-\nu) \leq 2\sqrt{C} \Vert \phi \Vert_{H^1(\R^d)} \dist(\mu,\nu)\, .
\]
\end{lemma}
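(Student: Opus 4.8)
The plan is to mimic the classical Wasserstein argument: use a geodesic for $\dist$ joining $\mu$ to $\nu$, differentiate $t\mapsto \int \phi\, d\rho_t$ along it, and control the result by the metric derivative together with the uniform $L^\infty$ bound on the interpolating densities. First I would invoke the dynamic (Benamou-Brenier-type) formulation \eqref{eq:WFRgeodesic}: there is a curve $(\rho_t)_{t\in[0,1]}$ with $\rho_0=\nu$, $\rho_1=\mu$ and fields $(v_t,\alpha_t)$ such that $\partial_t \rho_t = -\nabla\cdot(\rho_t v_t) + \alpha_t\rho_t$ weakly and $\int_0^1 (\|v_t\|_{L^2(\rho_t)}^2 + \tfrac14\|\alpha_t\|_{L^2(\rho_t)}^2)\,dt = \dist^2(\mu,\nu)$. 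Testing the continuity equation against $\phi\in H^1(\R^d)$ gives
\[
\int_{\R^d}\phi\, d(\mu-\nu) = \int_0^1 \int_{\R^d} \big(\nabla\phi\cdot v_t + \phi\,\alpha_t\big)\,d\rho_t\,dt .
\]
By Cauchy-Schwarz in $(x,t)$, the right-hand side is at most $\big(\int_0^1\int (|\nabla\phi|^2 + 4|\phi|^2)\,d\rho_t\,dt\big)^{1/2}\big(\int_0^1 (\|v_t\|_{L^2(\rho_t)}^2 + \tfrac14\|\alpha_t\|_{L^2(\rho_t)}^2)\,dt\big)^{1/2}$, and the second factor is exactly $\dist(\mu,\nu)$.

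It remains to bound the first factor. This is where the hypothesis that $\mu,\nu$ have density bounded by $C$ enters, via the geodesic convexity result of \cite{liero2017convexity}: along a $\dist$-geodesic between two measures with density $\le C$, every intermediate $\rho_t$ also has density bounded by $C$ (this is the $\dist$-analogue of the fact that the $L^\infty$ norm is displacement convex along Wasserstein geodesics — actually one needs the slightly stronger statement that $\rho\mapsto \int \rho^\infty$, or more precisely any functional forcing the $L^\infty$ constraint, is $\dist$-geodesically convex, which is precisely what \cite{liero2017convexity} provides). Hence $\int_{\R^d}(|\nabla\phi|^2+4|\phi|^2)\,d\rho_t \le C\int_{\R^d}(|\nabla\phi|^2+4|\phi|^2)\,dx \le 4C\|\phi\|_{H^1(\R^d)}^2$ uniformly in $t$, so the first factor is at most $2\sqrt{C}\,\|\phi\|_{H^1(\R^d)}$. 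Multiplying the two bounds yields $\int\phi\,d(\mu-\nu)\le 2\sqrt{C}\,\|\phi\|_{H^1(\R^d)}\dist(\mu,\nu)$, as claimed.

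The main obstacle is the density bound along the geodesic: one must be careful that the dynamic formulation \eqref{eq:WFRgeodesic} indeed admits a minimizer whose intermediate measures are absolutely continuous with the right $L^\infty$ bound, and that the geodesic convexity of the constraint holds on $\R^d$ (not merely on a convex subset). A minor technical point is justifying that $\phi\in H^1$ is an admissible test function in the weak continuity equation and that $t\mapsto \int\phi\,d\rho_t$ is absolutely continuous with the stated derivative; this follows by an approximation argument (mollifying $\phi$, using the $L^2(\rho_t)$ bounds on $v_t,\alpha_t$ and the uniform mass bound along the geodesic) exactly as in the Wasserstein setting, so I would only remark on it. The generalization to $W^{-1,q}$ with $\tfrac1p+\tfrac2q=1$ mentioned in the statement follows the same scheme with Hölder in place of Cauchy-Schwarz and the $L^p$ bound propagated along the geodesic.
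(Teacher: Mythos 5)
Your proposal is correct and follows essentially the same route as the paper's proof: pick a $\dist$-geodesic $(\rho_t)$, test the continuity equation with $\phi$, apply Cauchy--Schwarz to split off $\dist(\mu,\nu)$, and use Theorem~\ref{thm:convexity} (the geodesic convexity result of \cite{liero2017convexity}) to propagate the $L^\infty$ bound along the geodesic and control the remaining factor by $2\sqrt{C}\,\Vert\phi\Vert_{H^1}$. The technical points you flag (admissibility of $H^1$ test functions, absolute continuity of $t\mapsto\int\phi\,d\rho_t$) are treated in the paper by exactly the approximation argument you sketch.
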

\begin{proof}
Consider a minimizing geodesic $(\rho_t)_{t\in [0,1]}$ between $\mu$ and $\nu$ for the distance $\dist$ and $(v,\alpha)\in L^2([0,1],L^2(\rho_t))$ the associated velocity and growth fields. These quantities are the optimal variables in \eqref{eq:WFRgeodesic} and they satisfy the constant speed property $\Vert v_t \Vert^2_{L^2(\rho_t)} +\Vert \alpha_t \Vert^2/4 = \dist^2(\mu,\nu)$ for a.e.\ $t\in [0,1]$ (see \cite{chizat2015interpolating,kondratyev2015new,liero2015geodesics}). Moreover, by Theorem \ref{thm:convexity}, $L^{\infty}$ bounds are preserved along geodesics. Let us take $\phi \in H^1(\R^d)$ and notice that by approximation we can suppose that its support is bounded; then it holds
\begin{align*}
\int_{\R^d} \phi d(\mu-\nu) &= \int_0^1 \frac{d}{dt} \left( \int_{\R^d} \phi \rho_t\right) dt = \int_0^1\int_{\R^d} \left( \nabla \phi \cdot v_t + \phi \alpha_t \right) \rho_t dx dt \\
&\leq \sqrt{\int_0^1 \int_{\R^d} (|\nabla \phi|^2 + 4 |\phi|^2)\rho_t dx dt} \sqrt{\int_0^1 \int_{\R^d} (|v_t|^2 + \frac14 |\alpha_t|^2)\rho_t dx dt} \\
& \leq 2\sqrt{C}\Vert \phi \Vert_{H^1(\R^d)} \dist(\mu,\nu)\, .\qedhere
\end{align*}
\end{proof}

This lemma relies on an announced result of geodesic convexity for $\dist$ \cite{liero2017convexity}. We also rely on this result for proving uniqueness.
 
  \begin{theorem}\label{thm:convexity} Let us consider $\mu_t$ be a geodesic of absolutely continuous measures connecting the two absolutely continuous measures $\mu_0$ and $\mu_1$. Then, for every $m>1$ we have that $t \mapsto \int \bigl( \frac{d\mu_t}{d\mathcal{L}^d } \bigr) ^m \, dx$ is convex. In particular if $\mu_1, \mu_0 \leq C \mathcal{L}^d $, we have $\mu_t \leq C \mathcal{L}^d$ too.
 \end{theorem}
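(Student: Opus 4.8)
The ``in particular'' assertion is a formal consequence of the first one: if $\rho_i := d\mu_i/d\mathcal L^d \le C$, then $\int_\Omega \rho_i^m \le C^{m-1}\mu_i(\Omega) < \infty$ for all $m > 1$, hence by convexity $\int_\Omega \rho_t^m \le C^{m-1}M$ with $M := \max\{\mu_0(\Omega),\mu_1(\Omega)\}$; consequently, for every $\varepsilon > 0$ and $m > 1$, $(C+\varepsilon)^m\,\mathcal L^d(\{\rho_t > C+\varepsilon\}) \le C^{m-1}M$, which forces $\mathcal L^d(\{\rho_t > C+\varepsilon\}) = 0$ as $m \to \infty$, and then $\varepsilon \downarrow 0$ gives $\rho_t \le C$ a.e. So the content is the displacement convexity of $\mathcal E_m(\mu) := \int_\Omega (d\mu/d\mathcal L^d)^m\,dx$, which I would establish by transplanting McCann's argument to the cone, as follows.

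Recall (see \cite{liero2015optimal}) that $\dist$ lifts to the metric cone $\mathfrak C = (\bar\Omega\times[0,\infty))/{\sim}$ over $\bar\Omega$: writing $\mathsf h\boldsymbol\alpha(dx) := \int_{[0,\infty)} r^2\,\boldsymbol\alpha(dx\,dr)$ for the mass projection of $\boldsymbol\alpha \in \mathcal M_+(\mathfrak C)$, one has $\dist^2(\mu_0,\mu_1) = \min\{\mathbb W_{\mathfrak C}^2(\boldsymbol\alpha_0,\boldsymbol\alpha_1) : \mathsf h\boldsymbol\alpha_i = \mu_i\}$ with $\mathbb W_{\mathfrak C}$ the quadratic Monge--Kantorovich distance on $\mathcal M_+(\mathfrak C)$, and every $\dist$-geodesic is $\mu_t = \mathsf h\boldsymbol\alpha_t$ for a $\mathbb W_{\mathfrak C}$-geodesic $(\boldsymbol\alpha_t)$ joining optimal lifts. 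The key observation is that $\mathcal E_m$ pulls back exactly: a disintegration of $\boldsymbol\alpha$ over its $\bar\Omega$-marginal shows that for every $\boldsymbol\alpha$ with $\mathsf h\boldsymbol\alpha \ll \mathcal L^d$,
\[
\mathcal E_m(\mathsf h\boldsymbol\alpha) \;=\; \int_{\mathfrak C} \Big(\tfrac{d(\mathsf h\boldsymbol\alpha)}{d\mathcal L^d}(x)\Big)^{m-1} r^2\, d\boldsymbol\alpha(x,r) \;=:\; \widetilde{\mathcal E}_m(\boldsymbol\alpha),
\]
so it is enough to show that $\widetilde{\mathcal E}_m$ is convex along $\mathbb W_{\mathfrak C}$-geodesics.

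For that I would use that, $\mu_0,\mu_1$ being a.c., the $\mathbb W_{\mathfrak C}$-geodesic is (along the transported part of the mass) a genuine transport flow $\boldsymbol\alpha_t = (\mathbf T_t)_{\#}\boldsymbol\alpha_0$, with $\mathbf T_t(x,r) = (X_t(x),R_t(x))$ moving along cone geodesics, taking $\boldsymbol\alpha_0 = (\id,\sqrt{\rho_0})_{\#}(\mathcal L^d|_{\Omega})$ — this rests on the twist-condition argument already used in Proposition~\ref{prop:existence step}, carried out on the cone, where the cost is a strictly convex function of the cone distance wherever finite. Then $\mathsf h\boldsymbol\alpha_t = (X_t)_{\#}(R_t^2\,\mathcal L^d|_{\Omega})$, whence the Monge--Amp\`ere relation $\rho_t(X_t(x))\det DX_t(x) = R_t(x)^2$ and, by the change of variables $y = X_t(x)$,
\[
\mathcal E_m(\mu_t) \;=\; \int_\Omega R_t(x)^{2m}\,\big(\det DX_t(x)\big)^{1-m}\,dx,
\]
which equals $\int\rho_0^m$ at $t=0$ and $\int\rho_1^m$ at $t=1$. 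The statement thus reduces to the pointwise convexity, for a.e.\ $x$, of $t \mapsto R_t(x)^{2m}(\det DX_t(x))^{1-m}$ on $[0,1]$, where $R_t(x)^2 = (1-t)^2R_0^2 + t^2R_1^2 + 2t(1-t)R_0R_1\cos\theta(x)$ is the radial component of a cone geodesic ($\theta(x) \le \pi/2$ the angle travelled) and $X_t(x)$ runs along the Euclidean segment $[x,T(x)]$, $T$ the optimal map, with a time reparametrization dictated by the angular part — and it is precisely here that the convexity of $\Omega$ enters, ensuring this segment lies in $\Omega$ and is $\Omega$-geodesic so that the $\min\{|\cdot|,\tfrac{\pi}{2}\}$-geodesics between the lifted points are the expected ones. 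Inserting the explicit $R_t$ and $DX_t$, the claim becomes a Minkowski-type determinant inequality in the $(d+1)$-dimensional cone geometry, which holds for all $m \ge 1$ (it is McCann's condition, satisfied in every dimension for $m \ge 1$).

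I expect this last step to be the main obstacle: the form of $DX_t$ in the cone geometry is considerably heavier than the Wasserstein formula $DX_t = (1-t)\mathrm{Id} + tDT$, and one must moreover account for the mass that the optimal plan destroys or creates rather than transports (on the cone this is mass flowing to/from the vertex; after projecting with $\mathsf h$ it can overlap the transported part in $\Omega$, so the density of $\mu_t$ is a sum over preimages, which one controls using convexity of $r\mapsto r^m$) and for the possible vanishing of $\det DX_t$. An equivalent route, bypassing the cone, is Eulerian: parametrize the geodesic as $\partial_t\rho_t = -\nabla\!\cdot(\rho_t\nabla\phi_t) + 4\phi_t\rho_t$ with $\phi_t$ solving the associated Hamilton--Jacobi equation $\partial_t\phi_t + \tfrac12|\nabla\phi_t|^2 + 2\phi_t^2 = 0$ (cf.\ the discussion around \eqref{eq:WFRgeodesic}); with $U(r) = r^m$ and $P(r) = rU'(r) - U(r)$, a first differentiation gives $\tfrac{d}{dt}\mathcal E_m(\rho_t) = -\int P(\rho_t)\Delta\phi_t + 4\int(P(\rho_t)+U(\rho_t))\phi_t$, and differentiating once more while eliminating $\partial_t\phi_t$ through the Hamilton--Jacobi equation yields a second-derivative identity whose nonnegativity follows from $|D^2\phi|^2 \ge \tfrac1d(\Delta\phi)^2$ together with the signs of the reaction contributions (pure growth $\rho_t = \rho_0 e^{4\phi t}$ already makes $\mathcal E_m$ convex, and the mixed transport/growth terms combine with the right sign for $m \ge 1$); the a priori regularity needed for the two differentiations is produced by the usual approximation (mollify $\mu_0,\mu_1$, add vanishing viscosity to the Hamilton--Jacobi equation, pass to the limit using stability of geodesics, in the spirit of Propositions~\ref{prop:uniform}--\ref{prop:increasing}). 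The crux — the ``$\tfrac1d$''-type inequality and the sign of the mixed terms — is common to both routes; a complete treatment is the one announced in \cite{liero2017convexity}.
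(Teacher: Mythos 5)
The paper does not prove Theorem~\ref{thm:convexity}: the statement appears with no proof environment, and the sentence immediately preceding it attributes the result to \cite{liero2017convexity} as ``an announced result of geodesic convexity for $\dist$'' on which the paper relies both in Lemma~\ref{lem:H-1} and in the uniqueness argument. The only directly relevant ingredient the paper does establish on its own is Theorem~\ref{thm:rep} in the appendix, the explicit Lagrangian form of $\dist$-geodesics via the cone map, which is one building block of the route you describe but does not by itself give convexity. So there is no in-paper proof against which to compare your attempt.

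As for your sketch: it is strategically sound and, where carried out, correct. The reduction of the ``in particular'' clause to the main convexity statement via the $L^m$ bound, Chebyshev, and $m\to\infty$ is complete and right. The identity $\int(\tfrac{d\mu}{d\mathcal L^d})^m\,dx = \int_{\mathfrak C}(\tfrac{d\mathsf h\boldsymbol\alpha}{d\mathcal L^d}(x))^{m-1}r^2\,d\boldsymbol\alpha(x,r)$ whenever $\mathsf h\boldsymbol\alpha = \mu$ is a genuine identity (though note it is not a functional of $\boldsymbol\alpha$ alone, so convexity of $\widetilde{\mathcal E}_m$ along all $\mathbb W_{\mathfrak C}$-geodesics would have to be argued, not assumed to transfer), and the Monge--Amp\`ere reduction to pointwise convexity of $t\mapsto R_t^{2m}(\det DX_t)^{1-m}$ is the correct frame, matching the Lagrangian form of Theorem~\ref{thm:rep}. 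But the hard steps you yourself flag --- the cone determinant inequality, the mass flowing to or from the apex (which after projection by $\mathsf h$ can overlap the transported part, so $\rho_t$ is a sum over preimages), and the behaviour where $\rho_0$ vanishes so that your chosen lift $\boldsymbol\alpha_0 = (\id,\sqrt{\rho_0})_{\#}\mathcal L^d$ collapses to the vertex and the twist argument degenerates --- are the entire content of the theorem, and you defer them to \cite{liero2017convexity}, which is exactly what the paper does. Your outline is a reasonable map of where a self-contained proof would go, but, like the paper, it is not one.
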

 

\subsection{Proof of uniqueness}
\begin{proposition}[Uniqueness]\label{prop:uniqueness}
If $\Omega$ is convex, every solution of the PDE \eqref{eq:mainPDE} is an $\mathrm{EVI}_{(-2\lambda)}$ solution of gradient flow of $G$ in the metric space $(\mathcal{M}_+(\Omega),\dist)$ and we have uniqueness for \eqref{eq:mainPDE}
\end{proposition}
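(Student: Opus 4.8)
The plan is to prove the stronger statement that every solution of \eqref{eq:mainPDE} is an $\mathrm{EVI}_{(-2\lambda)}$ curve for $G$; uniqueness for \eqref{eq:mainPDE} then follows at once, since $\mathrm{EVI}_{\alpha}$ solutions depend $e^{-\alpha t}$-Lipschitz-continuously on the initial datum and are in particular unique. So let $(\rho_t,p_t)$ be a solution of \eqref{eq:mainPDE} on $[0,T]$. Because $p\in L^2([0,T],H^1(\Omega))$ and $\rho_t\le 1$, one has $\int_0^T\int_\Omega(|\nabla p_t|^2+4(\lambda-p_t)^2)\,d\rho_t\,dt<\infty$, so by the dynamic formulation \eqref{eq:WFRgeodesic} the curve $t\mapsto\rho_t$ is absolutely continuous in $(\mathcal{M}_+(\Omega),\dist)$, with metric speed bounded by the square root of this integrand. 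In particular, for every $\nu\in\dom G$ the scalar function $t\mapsto\tfrac12\dist^2(\rho_t,\nu)$ is locally absolutely continuous, hence differentiable a.e., and it suffices to show that for every such $\nu$ and a.e.\ $t$
\[
\tfrac12\tfrac{d}{dt}\dist^2(\rho_t,\nu)\ \le\ G(\nu)-G(\rho_t)+\lambda\,\dist^2(\rho_t,\nu).
\]

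The first step is to differentiate the squared distance. Let $\alpha_t,\beta_t$ be optimal dual potentials for $T_{\clog}(\rho_t,\nu)$ as in Theorem \ref{th:duality}, and put $\phi_t=1-e^{-\alpha_t}$, which is bounded and Lipschitz on $\bar\Omega$. Since $(\alpha_t,\beta_t)$ is admissible in the dual problem for \emph{any} pair of measures, the function $s\mapsto\dist^2(\rho_s,\nu)-\int_\Omega(1-e^{-\alpha_t})\,d\rho_s-\int_\Omega(1-e^{-\beta_t})\,d\nu$ is nonnegative, locally absolutely continuous, and vanishes at $s=t$, so its derivative vanishes there; using the weak formulation of \eqref{eq:mainPDE} tested against the Lipschitz function $\phi_t$ (legitimate after approximation by $C^\infty_c(\bar\Omega)$ functions, the no-flux boundary condition being part of the definition of a solution) gives, for a.e.\ $t$,
\[
\tfrac12\tfrac{d}{dt}\dist^2(\rho_t,\nu)=\tfrac12\int_\Omega\big(-\nabla\phi_t\cdot\nabla p_t+4\,\phi_t\,(\lambda-p_t)\big)\,d\rho_t,
\]
where one uses that $p_t\le\lambda$ a.e.\ (a maximum-principle argument on the region $\{p_t>0\}\subset\{\rho_t=1\}$) so that $(\lambda-p_t)_+=\lambda-p_t$.

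The second step is to exploit the geodesic convexity of $G$. Since $\Omega$ is convex, $\dist$-geodesics between measures on $\Omega$ stay supported on $\Omega$, and by Theorem \ref{thm:convexity} the set $\{\rho\le1\}$, hence $\dom G$, is geodesically convex. Along a constant-speed geodesic $(\mu_s)_{s\in[0,1]}$ with velocity and growth fields $(\nabla\varphi_s,4\varphi_s)$, the potential $\varphi_s$ solves the Hamilton--Jacobi equation $\partial_s\varphi_s+\tfrac12|\nabla\varphi_s|^2+2\varphi_s^2=0$; differentiating $s\mapsto\mu_s(\Omega)$ twice, using this equation together with the constant-speed identity $\int_\Omega(|\nabla\varphi_s|^2+4\varphi_s^2)\,d\mu_s=\dist^2(\mu_0,\mu_1)$, yields
\[
\tfrac{d^2}{ds^2}\,\mu_s(\Omega)=2\int_\Omega\big(\tfrac12|\nabla\varphi_s|^2+2\varphi_s^2\big)\,d\mu_s=2\,\dist^2(\mu_0,\mu_1).
\]
Hence $s\mapsto G(\mu_s)=-\lambda\,\mu_s(\Omega)$ is a quadratic with second derivative $-2\lambda\,\dist^2(\mu_0,\mu_1)$, i.e.\ $G$ is $(-2\lambda)$-geodesically convex; along the geodesic $(\mu^t_s)$ joining $\rho_t$ to $\nu$ this gives $\tfrac{d^+}{ds}\big|_{s=0}G(\mu^t_s)\le G(\nu)-G(\rho_t)+\lambda\,\dist^2(\rho_t,\nu)$.

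The final step is to match the two one-sided derivatives. The equivalence between the static problem defining $\dist$ and the dynamic problem \eqref{eq:WFRgeodesic} identifies the initial velocity potential of the geodesic from $\rho_t$ to $\nu$ as $\varphi^t_0=-\tfrac12\phi_t=-\tfrac12(1-e^{-\alpha_t})$; this is consistent with the normalization $\int_\Omega(|\nabla\phi_t|^2+4\phi_t^2)\,d\rho_t=4\,\dist^2(\rho_t,\nu)$ of Proposition \ref{prop:stime} and can be checked directly on elementary configurations. Computing the first variation of $G$ at $\rho_t$ in that direction, with $G'(\rho_t)=p_t-\lambda$ (the pressure being the Lagrange multiplier of the constraint $\rho\le1$), gives $\tfrac{d^+}{ds}\big|_{s=0}G(\mu^t_s)=\int_\Omega\big(\nabla p_t\cdot\nabla\varphi^t_0+4(p_t-\lambda)\varphi^t_0\big)\,d\rho_t=\tfrac12\int_\Omega\big(-\nabla\phi_t\cdot\nabla p_t+4\phi_t(\lambda-p_t)\big)\,d\rho_t$, which is exactly the quantity of the first step. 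Chaining the two inequalities yields the $\mathrm{EVI}_{(-2\lambda)}$ estimate, and then, given two solutions $\rho,\tilde\rho$ with $\rho_0=\tilde\rho_0$, the usual $\mathrm{EVI}$ comparison argument (each curve tested against the other, the two inequalities added, Gr\"onwall) forces $\dist(\rho_t,\tilde\rho_t)\equiv 0$. The crux, and the only genuinely delicate point, is the distance side: making the differentiation of the non-smooth squared distance rigorous, justifying the use of the optimal Kantorovich potentials as test functions, and---above all---pinning down the identity $\varphi^t_0=-\tfrac12\phi_t$ relating the static potential to the geodesic, since it is precisely this that turns the comparison of the two one-sided derivatives into an equality.
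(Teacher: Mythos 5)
Your proposal follows the same overall skeleton as the paper's proof (differentiate the squared distance by exploiting the optimal dual potentials, control the pressure contribution via geodesic convexity on convex $\Omega$, use Gr\"onwall), and the formal computation giving $\tfrac{d^2}{ds^2}\mu_s(\Omega) = 2\dist^2(\mu_0,\mu_1)$, hence $(-2\lambda)$-geodesic convexity of $G$ on $\dom G$, is a clean way to organize the second step. The identification $\varphi_0^t = -\tfrac12\phi_t$ is correct and matches Lemma~\ref{lem:derivativemut}.

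However, there is a genuine gap in your final step. You assert an \emph{equality}
\[
\frac{d^+}{ds}\Big|_{s=0}G(\mu^t_s) = \int_\Omega\bigl(\nabla p_t\cdot\nabla\varphi^t_0 + 4(p_t-\lambda)\varphi^t_0\bigr)\,d\rho_t
\]
by taking $G'(\rho_t)=p_t-\lambda$. This is not an equality. By Theorem~\ref{thm:convexity} the geodesic $\mu^t_s$ stays below $1$, so $G(\mu^t_s)=-\lambda\,\mu^t_s(\Omega)$ for all $s$ and the true derivative is exactly $-4\lambda\int_\Omega\varphi^t_0\,d\rho_t$. Your expression carries the additional term $\int_\Omega(\nabla p_t\cdot\nabla\varphi^t_0+4p_t\varphi^t_0)\,d\rho_t$, which is not zero but only nonpositive: it is the derivative at $s=0$ of $s\mapsto\int p_t\,d\mu^t_s$, a function maximized at $s=0$ because $p_t\ge0$, $p_t(1-\rho_t)=0$ and $\mu^t_s\le1$ (this is where convexity of $\Omega$ is genuinely used so that the geodesic stays supported in $\Omega$). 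Establishing this one-sided inequality is precisely the content of the paper's Lemma~\ref{lem:appendix}, which in turn relies on the first-variation formula of Lemma~\ref{lem:derivativemut} and on Theorem~\ref{thm:convexity}. The pressure is a Lagrange multiplier, so the correct chain-rule statement is a subgradient \emph{inequality}, not an equality. Your error happens to go in the helpful direction (the final $\mathrm{EVI}_{(-2\lambda)}$ estimate still drops out once the correct $\le$ is substituted), but as written you have not proved the inequality that carries the real weight, and your closing paragraph identifies the wrong step as the sole "crux" -- the static-to-dynamic identification $\varphi^t_0=-\tfrac12\phi_t$ is indeed delicate, but the subgradient inequality is the other half of the crux and is the one you skipped.

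One smaller remark: the paper actually proves uniqueness by differentiating $\dist^2(\rho^1_t,\rho^2_t)$ between two solutions directly and applying the (contents of) Lemma~\ref{lem:appendix} twice, then deduces the EVI afterwards. Your route (EVI for arbitrary reference $\nu$ first, then contraction) is a valid alternative and needs essentially the same lemmas, at the cost of justifying absolute continuity of $t\mapsto\dist^2(\rho_t,\nu)$ for every fixed $\nu\in\dom G$.
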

\begin{proof}
We follow the same lines as \cite{di2016uniqueness}, using the convexity result from Theorem \ref{thm:convexity}. Let us consider two solutions $(\rho_t^1,p_t^1)$ and $(\rho_t^2, p_t^2)$. Let us assume we can prove that we have (distributionally)

\begin{equation}\label{eqn:derivative} \frac{ d}{dt} \dist^2 (\rho_t^1, \rho_t^2) = \int_{\Omega} -\nabla \phi_t \cdot \nabla p_t^1 + 4\phi_t ( \lambda - p_t^1) \, d \rho_t^1 +\int_{\Omega}  -\nabla \psi_t \cdot \nabla p_t^2 + 4\psi_t ( \lambda - p_t^2) \, d \rho_t^2, 
\end{equation}
where $\phi_t, \psi_t$ is a couple of optimal potentials for $\rho_t^1, \rho_t^2$. Then using Lemma \ref{lem:appendix} we conclude
$$ \frac{ d}{dt} \dist^2 (\rho_t^1, \rho_t^2)  \leq 4\lambda \int \phi_t \, d \rho_t^1 + 4\lambda  \int \psi_t \, d \rho_t^2 = 4\lambda \dist^2 (\rho_t^1, \rho_t^2),$$
and so by Gr\"onwall's lemma it follows $\dist^2(\rho_t^1, \rho_t^2) \leq e^{2\lambda t} \dist^2(\rho_t^1, \rho_t^2)$. So we are left to prove \eqref{eqn:derivative} in the distributional sense. Notice \eqref{eqn:derivative} is true if we can prove that for every $0 < s < r <T$ we have
$$\dist^2(\rho_r^1, \rho_r^2) - \dist^2(\rho_s^1, \rho_s^2) = \int_s^r \left( \int_{\Omega} \nabla \phi_t \cdot v_t^1 + \phi_t r_t^1 \, d \rho_t^1 +\int_{\Omega}  \nabla \psi_t \cdot v_t^2 + \psi_t  r_t^2 \, d \rho_t^2 \right) \, dt,$$
where we can suppose $\partial_t \rho_t^1 + \nabla \cdot ( v_t^1 \rho_t^1)= r_t^1 \rho_t^1$  with $\iint (|v_t^1|^2 + (r_t^1)^2)\, d \rho_t^1 \, d t< \infty$ and similarly for $\rho_t^2$. Let us fix $n$ and consider $T_{c_n}(\rho_t^1, \rho_t^2)$ and a couple of optimal potentials $\phi_n, \psi_n$. In particular for every $s$ we have
$$ T_{c_n}(\rho_s^1 , \rho_s^2) \geq \int \phi_n \, d \rho_s^2 + \int \psi_n \rho_s^2,$$
with equality for $s=t$. Now, with a slight modification of \cite[Lemma 2.3]{di2016uniqueness} we can prove that there exists a full measure set where we can differentiate both sides an the derivatives are equal. In particular, using that $t\mapsto T_{c_n}(\rho_t^1, \rho_t^2)$ is absolutely continuous, we get 
\begin{multline*}
T_{c_n}(\rho_r^1, \rho_r^2) - T_{c_n}(\rho_s^1, \rho_s^2) =\\ \int_s^r \left( \int_{\Omega} \nabla \phi_{n,t} \cdot v_t^1 + \phi_{n,t} r_t^1 \, d \rho_t^1 +\int_{\Omega}  \nabla \psi_{n,t} \cdot v_t^2 + \psi_{n,t}  r_t^2 \, d \rho_t^2 \right) \, dt,
\end{multline*}
and then letting $n \to \infty$ we conclude, using that $(\phi_{n,t}, \nabla \phi_{n,t} ) \to (\phi_t, \nabla \phi_t)$ in $L^2(\rho_t^1)$ thanks to Proposition~\ref{prop:increasing} (iii).

The EVI Characterization is easily deduced from those previous computations. Taking a solution $(\rho_t,p_t)_{t\in [0,T]}$ and any $\mu \in \mathcal{M}_+(\Omega)$ such that $\mu \leq 1$, we have, by denoting $(\phi_t,\psi_t)$ the optimal potentials for $(\rho_t,\mu)$,
\[
\frac12 \frac{d}{dt} \dist^2(\rho_t,\mu) = \frac 12 \int_\Omega (-\nabla \phi_t \cdot \nabla p_t + 4\phi_t ( \lambda - p_t))\, d\rho_t \leq 2\lambda \int_\Omega \phi_t \, d\rho_t
\]
by Lemma \ref{lem:appendix} and we conclude using Theorem \ref{th:duality} (i) and (iv), which proves that (denoting with $\gamma_t$ the optimal plan between $\rho_t$ and $\mu$) one has 
\begin{align*}
2\lambda \int_\Omega \phi_t \, d\rho_t &= 2\lambda \rho_t(\R^d) - 2 \lambda \gamma_t(\R^d \times \R^d)  \\
&  = 2\lambda \rho_t(\R^d) + \lambda \left( \dist^2(\rho_t,\mu) -  \rho_t(\R^d) -  \mu( \R^d)\right) \\
& \leq  G(\mu) -G(\rho_t) + \lambda \dist^2(\rho_t,\mu). \qedhere
\end{align*}
\end{proof}

\section{Numerical scheme}\label{sec:numericalscheme}
The characterization of the tumor growth model \eqref{eq:mainPDE} as a gradient flow suggests a constructive method for computing solutions through the time discretized scheme \eqref{eq:metricEulerScheme}. In this section we describe a consistent spatial discretization, an optimization algorithm and numerical experiments.

First, let us recall that the resolution of one step of the scheme involves, for a given time step $\tau>0$ and previous step $\mu\in L^1_{+}(\Omega)$, such that $\mu\leq \mathcal{L}^d$ to compute
\begin{equation}\label{eq:timestepbis}
\nu \in \argmin \left\{ 2\tau G(\nu)  + \dist(\nu,\mu)^2 \right\}
\end{equation}
According to Proposition \ref{prop:existence step}, by using the optimal entropy-transport problem \eqref{eq:KFdef} and exchanging the infima, this problem can be written in terms of one variable $\gamma$ which stands for the unknown coupling
\begin{equation}\label{eq:onesstepcoupling}
\min_{\gamma\in \mathcal{M}_{+}(\Omega^2)} \left\{  
\int_{\Omega^2} c(x,y)d\gamma + \Ent{\gamma_1}{ \mu} + \inf_{\nu\in \mathcal{M}_{+}(\Omega)} \left\{ \Ent{\gamma_2}{\nu} + 2\tau G(\nu) \right\}
\right\}\, ,
\end{equation}
which admits a unique minimizer $\gamma^*$ and the optimal $\nu^{*}$ can be recovered from $\gamma^{*}$  through the first order pointwise optimality conditions as
\[
\nu^{*}=\min\{ 1, \gamma_2^{*}/(1-2\tau\lambda)\}.
\]
The subject adressed in this Section is thus the numerical resolution of \eqref{eq:onesstepcoupling}.

\subsection{Spatial discretization}
Let $\mathcal{W}=(W_{i}, x_i)_{i=1}^N$ be a pointed partition of a compact domain $\Omega \subset \R^d$ where $x_{i}$ is a point which belongs to the interior of the set $W_{i}$ for all $i$ (in our experiments, $W_i$ will always be a $d$-dimensionnal cube and $x_i$ its center). We denote by $\diam \mathcal{W}$ the quantity $\max_i \diam W_{i}$. An atomic approximation of $\mu\in \mathcal{M}_+(\Omega)$ is given by the measure
 \[
\mu_\mathcal{W} = \sum_{i=1}^N \alpha_{i} w_i \delta_{x_{i}}
 \]
where $w_i\eqdef \mathcal{L}^d(W_i)$ is the (positive) Lebesgue measure of $W_i$, $\alpha_{i}\eqdef \mu(W_{i})/w_i$ are the locally averaged densities and $\delta_{x_{i}}$ is the Dirac measure of mass $1$ concentrated at the point $x_{i}$. This is a proper approximation since for a sequence of partitions $(\mathcal{W}_k)_{k\in\mathbb{N}}$ such that $\diam \mathcal{W}_k \to 0$ then $\mu_{\mathcal{W}_k}$ converges weakly to $\mu$ (indeed, $\mu_{\mathcal{W}_k}$ is the pushforward of $\mu$ by the map $W_{k,i} \ni x\mapsto x_{k,i}$ which converges uniformly to the identity map as $k\to \infty$).

Now assume that we are given a vector $\alpha\in \R_+^N$.
For a discrete coupling $\gamma \in \R^{N\times N}$ seen as a square matrix, let $J$ be the convex functional defined as
\begin{equation}\label{eq:functionalJ}
J(\gamma) \eqdef \langle c, \gamma \rangle  + F_{1}(\gamma w) + F_{2}(\gamma^{T} w)
\end{equation}
where $\gamma w$, $\gamma^T w$ are matrix/vector products, $ \langle c, \gamma \rangle \eqdef \sum_{i,j} c(x_{i},y_{j}) \gamma_{i,j}w_iw_j$ and
\begin{align*}
F_1: \R^N \ni \beta &\mapsto H(\beta|\alpha)\\
F_2: \R^N \ni \beta &\mapsto \min_{s\in [0,1]^N} \left\{ H(\beta|s) - 2\lambda\tau \sum_i s_iw_i \right\}
\end{align*}
and, for $\alpha,\beta\in \R_+^N$, the discrete relative entropy is $H(\beta|\alpha) \eqdef  \sum_i H(\beta_i|\alpha_i)$ where
\begin{equation*}
H(\beta_i|\alpha_i) \eqdef \begin{cases}
 (\beta_i\log(\beta_i/\alpha_i)-\beta_i + \alpha_i) w_i &\text{if $\beta_i\geq0$ and $\alpha_i>0$}\\
0 & \text{if  $\beta_i=0$ and $\alpha_i=0$}\\
+\infty & \text{otherwise.}
\end{cases}
\end{equation*}
With these definitions, solving the finite dimensional convex optimization problem
\begin{equation}\label{eq:discreteproblem}
\gamma^{*} \in \argmin_{\gamma\in \R_{+}^{N\times N}} J(\gamma)
\end{equation}
is nothing but solving a discrete approximation of \eqref{eq:onesstepcoupling} where the maximum density constraint is not with respect to the Lebesgue measure anymore, but with respect to its discretized version. This is formalized in the following simple proposition. 

\begin{proposition}\label{prop:minimizerdiscret}
Let $\mathcal{W}$ be a partition of $\Omega$ as above and let $\gamma^*$ be obtained through \eqref{eq:discreteproblem}. Then the measure $\nu \eqdef \sum_i \beta_i w_i \delta_{w_i}$ where $\beta = \min \{ 1, ((\gamma^*)^T w)/(1-2\lambda\tau)\}$ does not depend on the choice of $\gamma^*$ and is a minimizer of
\begin{equation*}
\inf_{\nu\in \mathcal{M}_+(\Omega)} \dist^2(\mu_{\mathcal{W}},\nu) -2\tau \nu(\Omega) +\iota_C(\nu)
\end{equation*}
where $\iota_C$ is the convex indicator of the set $C$ of measures which are upper bounded by the discretized Lebesgue measure $\sum_i w_i\delta_{x_i}$.
\end{proposition}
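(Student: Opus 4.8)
The plan is to recognize Proposition~\ref{prop:minimizerdiscret} as an instance of the one-step analysis already carried out in Proposition~\ref{prop:existence step}, specialized to the finite-dimensional setting where the reference measure $\mathcal{L}^d$ is replaced by the discretized Lebesgue measure $\mathcal{L}^d_\mathcal{W} \eqdef \sum_i w_i \delta_{x_i}$. First I would make precise the identification between discrete objects in $\R^N$ and atomic measures on $\{x_1,\dots,x_N\}$: a vector $\beta\in\R_+^N$ corresponds to the measure $\sum_i \beta_i w_i \delta_{x_i}$, and under this correspondence the discrete relative entropy $H(\beta|\alpha)$ coincides with $\Ent{\cdot}{\cdot}$ between the associated measures (this is immediate from comparing \eqref{eqn:def_entropy} with the definition of $H$, since both measures are supported on the same atoms with the same weights $w_i$). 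Similarly $\langle c,\gamma\rangle$ is exactly $\int_{\Omega^2} c\, d\gamma$ for the atomic coupling $\sum_{i,j}\gamma_{i,j}w_iw_j\delta_{(x_i,y_j)}$, and the constraint $s\in[0,1]^N$ translates to $\nu \leq \mathcal{L}^d_\mathcal{W}$. Hence $J(\gamma)$ is literally the objective $I(\gamma)$ appearing in the proof of Proposition~\ref{prop:existence step}, but with $\mu$ replaced by $\mu_\mathcal{W}$ and the maximum-density constraint taken with respect to $\mathcal{L}^d_\mathcal{W}$ instead of $\mathcal{L}^d$.

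Next I would invoke the structure already exhibited in the proof of Proposition~\ref{prop:existence step}: writing $F_2(\gamma^T w) = \inf_{s\leq 1}\{H(\gamma^T w|s) - 2\lambda\tau\sum_i s_i w_i\}$ and noting that the pointwise first-order optimality conditions give the optimal $s$ explicitly as $s_i = \min\{1,(\gamma^T w)_i/(1-2\tau\lambda)\}$ (valid since $2\tau\lambda<1$). Translating back to measures, this says precisely that for any optimal $\gamma^*$ the measure $\nu = \sum_i \beta_i w_i\delta_{x_i}$ with $\beta = \min\{1,(\gamma^{*T}w)/(1-2\tau\lambda)\}$ is the optimal $\nu$ in the inner infimization, and therefore the pair $(\gamma^*,\nu)$ solves the joint problem $\min_{\gamma,\nu}\{\int c\,d\gamma + \Ent{\gamma_1}{\mu_\mathcal{W}} + \Ent{\gamma_2}{\nu} - 2\tau\lambda\nu(\Omega) + \iota_C(\nu)\}$, which, after reinstating the definition $\dist^2(\mu_\mathcal{W},\nu) = \min_\gamma\{\int c\,d\gamma + \Ent{\gamma_1}{\mu_\mathcal{W}}+\Ent{\gamma_2}{\nu}\}$ and exchanging the infima in $\gamma$ and $\nu$ (finite-dimensional, so Fubini/exchange of infima is unproblematic), is exactly the problem displayed in the statement. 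Uniqueness of $\nu$ — i.e.\ its independence of the choice of $\gamma^*$ — follows from the uniqueness argument in Proposition~\ref{prop:existence step}: although $\gamma^*$ itself need not be unique in the discrete setting (the twist-condition argument does not apply to atomic measures), the strict convexity of $F_1$ in $\gamma_1$ forces all minimizers to share the same first marginal; then since $\nu$ is determined by $\gamma^{*T}w$ and, by a symmetric argument using strict convexity of $\beta\mapsto H(\beta|s)$ in $\beta$ together with strict convexity of the inner problem value, the second marginal $\gamma^{*T}w$ is also common to all minimizers, so $\beta$ and hence $\nu$ is unique.

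Actually the cleanest route for uniqueness of $\nu$ is to observe that $F_2$ is itself convex in its argument and that the full functional $\nu\mapsto \dist^2(\mu_\mathcal{W},\nu) - 2\tau\lambda\nu(\Omega)+\iota_C(\nu)$ over atomic measures is strictly convex wherever finite — because $\nu\mapsto\Ent{\gamma_2}{\nu}$ enters through an infimum over $\gamma$ that preserves strict convexity in the relevant direction, mirroring exactly the argument that $\mathcal{F}_2$ is convex and the minimizing $\rho$ is unique in the proof of Proposition~\ref{prop:existence step}. So $\nu$ is the unique minimizer of the stated problem regardless of which optimal $\gamma^*$ was used to express it. Existence of a minimizer $\gamma^*$ is not an issue here: $J$ is a proper, lower semicontinuous, convex function on $\R_+^{N\times N}$ with superlinear growth (inherited from the entropy terms and the nonnegative cost), so the finite-dimensional minimization problem \eqref{eq:discreteproblem} admits a minimizer, and the feasible competitor $\gamma = \mu_\mathcal{W}\otimes\mu_\mathcal{W}$ (in coordinates) shows the infimum is finite.

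The main obstacle, such as it is, is purely bookkeeping: one must be careful and consistent about the factors of $w_i$ (and $w_iw_j$) relating vectors in $\R^N$ to atomic measures, so that $H(\beta|\alpha)$, $\langle c,\gamma\rangle$, and the mass $\sum_i s_iw_i$ all match their measure-theoretic counterparts; once this dictionary is fixed the proposition is a direct transcription of the first-order optimality and uniqueness arguments of Proposition~\ref{prop:existence step}, and no new analytic ingredient is needed.
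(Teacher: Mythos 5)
Your proof takes essentially the same approach as the paper: set up the dictionary between vectors in $\R^N$ and atomic measures supported on $\{x_i\}$, observe that under this dictionary $J$ coincides with the measure-theoretic objective \eqref{eq:onesstepcoupling} (with $\mu_{\mathcal{W}}$ and the discretized Lebesgue measure in place of $\mu$ and $\mathcal{L}^d$), read off $\beta$ from first-order conditions, and argue uniqueness by convexity. The paper argues in the reverse direction --- it starts from the stated measure-theoretic problem, observes that any feasible coupling must already be atomic because both marginals are dominated by atomic measures, and thereby reduces to \eqref{eq:discreteproblem} --- but the content is the same.

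There is, however, a concrete error in your first uniqueness argument. You claim the ``inner problem value'' $F_2$ is strictly convex in $\gamma^T w$, and use this to conclude that the second marginal of $\gamma^*$ is unique. This is false: substituting the optimal $s_i = \min\{1, (\gamma^Tw)_i/(1-2\tau\lambda)\}$ back into $H(\gamma^Tw\mid s) - 2\lambda\tau\sum_i s_i w_i$ gives, coordinatewise, $F_2$ equal to $(\gamma^Tw)_i\,w_i\log(1-2\tau\lambda)$ for $(\gamma^Tw)_i \le 1-2\tau\lambda$ --- i.e.\ \emph{linear}, not strictly convex --- and only strictly convex on $(\gamma^Tw)_i \ge 1-2\tau\lambda$. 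So strict convexity of $F_2$ cannot be used to pin down the second marginal. Your fallback --- strict convexity of $\nu\mapsto \dist^2(\mu_{\mathcal{W}},\nu)-2\tau\lambda\nu(\Omega)+\iota_C(\nu)$ --- is exactly what the paper asserts and therefore matches its level of rigor, but note that the appeal to Proposition~\ref{prop:existence step} as a model is misplaced: there uniqueness of $\rho$ is deduced from uniqueness of $\gamma$ via the twist condition, which (as you yourself observe) is unavailable in the atomic setting, so that proposition's proof does not in fact exhibit the strict-convexity-in-$\nu$ argument you say it mirrors.
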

\begin{proof}
This result essentially follows by construction. Let us denote by (P) the minimization problem in the proposition: (P) can be written as a minimization problem over couplings $\gamma\in \mathcal{M}_+(\Omega\times \Omega)$ as in \eqref{eq:onesstepcoupling}. But in this case, any feasible $\gamma$ is discrete because both marginals must be discrete in order to have finite relative entropies. Thus (P) reduces to the finite dimensional problem $\eqref{eq:discreteproblem}$ and the expression for $\beta$ is obtained by first order conditions. Finally, (P) is strictly convex as a function of $\nu$, hence the uniqueness.
\end{proof}

The following proposition guarantees that the discrete measure $\nu_k$ built in Proposition~\ref{prop:minimizerdiscret} properly approximates the continuous solution.

\begin{proposition}[Consistency of discretization]\label{prop:convergencespatialdiscret}
Let $(\mathcal{W}_k)_{k\in \mathbb{N}}$ be a sequence of partitions of $\Omega$ such that $\diam \mathcal{W}_k \to 0$ and for all $k$ compute $\nu_k$ as in Proposition \ref{prop:minimizerdiscret}. Then the sequence $(\nu_k)$ converges weakly to the continuous minimizer of \eqref{eq:timestepbis}.
\end{proposition}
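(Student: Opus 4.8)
The plan is a $\Gamma$-convergence argument. Denote by $E(\nu):=2\tau G(\nu)+\dist^2(\nu,\mu)$ the functional whose unique minimizer $\nu^*$ solves \eqref{eq:timestepbis} (Proposition \ref{prop:existence step}; recall $2\tau\lambda<1$ is assumed throughout), and by $E_k$ the discrete functional of Proposition \ref{prop:minimizerdiscret}, that is $E_k(\nu):=-2\tau\lambda\,\nu(\Omega)+\dist^2(\nu,\mu_{\mathcal{W}_k})$ when $\nu$ is supported on the points of $\mathcal{W}_k$ and bounded above by $\sum_i w_i\delta_{x_i}$, and $+\infty$ otherwise; its minimizer is $\nu_k$. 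I would first record an elementary estimate: for a pointed partition $\mathcal{W}$, let $T_{\mathcal{W}}\colon\Omega\to\Omega$ send $x$ to $x_i$ whenever $x\in W_i$; then for any finite $\rho\in\mathcal{M}_+(\Omega)$, testing \eqref{eq:KFdef} with the deterministic coupling $\gamma=(\mathrm{id},T_{\mathcal{W}})_\#\rho$ --- whose marginals are exactly $\rho$ and $(T_{\mathcal{W}})_\#\rho$, so both entropy terms vanish, and whose transport cost is at most $f_\ell(\diam\mathcal{W})\,\rho(\Omega)$ --- yields $\dist^2(\rho,(T_{\mathcal{W}})_\#\rho)\leq f_\ell(\diam\mathcal{W})\,\rho(\Omega)$. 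Since $(T_{\mathcal{W}_k})_\#\mu=\mu_{\mathcal{W}_k}$, this gives $\dist(\mu_{\mathcal{W}_k},\mu)\to 0$, and the same estimate controls the competitors below.

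Next, compactness and admissibility of cluster points. Testing $E_k$ against $\nu=0$ gives $E_k(\nu_k)\leq\dist^2(0,\mu_{\mathcal{W}_k})=\mu(\Omega)$; combined with $\dist^2(\nu_k,\mu_{\mathcal{W}_k})\geq(\sqrt{\nu_k(\Omega)}-\sqrt{\mu(\Omega)})^2$ (Lemma \ref{lem:mass}) and $2\tau\lambda<1$, this bounds $\nu_k(\Omega)$ uniformly, so since all $\nu_k$ are carried by the compact $\Omega$ the sequence is weakly precompact. Fix a cluster point $\bar\nu$ with $\nu_k\weakto\bar\nu$ along a subsequence. To see $\bar\nu\in\dom G$, write $\nu_k=\sum_i\beta_{k,i}w_i\delta_{x_i}$ with $\beta_{k,i}\in[0,1]$ (Proposition \ref{prop:minimizerdiscret}) and set $\hat\nu_k:=\sum_i\beta_{k,i}\mathbf{1}_{W_i}\,\mathcal{L}^d\leq\mathcal{L}^d$; then $(T_{\mathcal{W}_k})_\#\hat\nu_k=\nu_k$, so the estimate of the previous paragraph gives $\dist(\hat\nu_k,\nu_k)\to0$, hence $\hat\nu_k\weakto\bar\nu$ (Proposition \ref{prop:metric properties}); since the set $\{\nu:\nu\leq\mathcal{L}^d\}$ is weakly closed ($\Omega$ bounded, test against nonnegative $\phi\in C_b(\Omega)$), we obtain $\bar\nu\leq\mathcal{L}^d$.

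Then I would pass $E_k$ to the limit. Because $\dist$ metrizes the weak topology, $|\dist(\nu_k,\mu_{\mathcal{W}_k})-\dist(\bar\nu,\mu)|\leq\dist(\nu_k,\bar\nu)+\dist(\mu_{\mathcal{W}_k},\mu)\to0$, and $\nu_k(\Omega)\to\bar\nu(\Omega)$ since $\Omega$ is compact, so $E_k(\nu_k)\to E(\bar\nu)$ along the subsequence. For the upper bound, use the competitor $\sigma_k:=(T_{\mathcal{W}_k})_\#\nu^*$, which is admissible for $E_k$ because $\nu^*\leq\mathcal{L}^d$ forces $\nu^*(W_i)\leq w_i$; it has the same mass as $\nu^*$, and the triangle inequality together with the first-paragraph estimate gives $\dist(\sigma_k,\mu_{\mathcal{W}_k})\leq\dist(\sigma_k,\nu^*)+\dist(\nu^*,\mu)+\dist(\mu,\mu_{\mathcal{W}_k})\to\dist(\nu^*,\mu)$, whence $\limsup_k E_k(\nu_k)\leq\limsup_k E_k(\sigma_k)\leq E(\nu^*)$. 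Combining, $E(\bar\nu)\leq E(\nu^*)$ with $\bar\nu\in\dom G$, so by uniqueness of the minimizer $\bar\nu=\nu^*$. Since every weak cluster point of the precompact sequence $(\nu_k)$ equals $\nu^*$, the whole sequence converges weakly to $\nu^*$, the minimizer of \eqref{eq:timestepbis}.

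The step I expect to be the main obstacle is the simultaneous handling of the two discretizations: producing, out of the continuous minimizer $\nu^*$, a discrete competitor admissible for the \emph{discretized} density constraint (done by pushing forward, using $\nu^*(W_i)\leq\mathcal{L}^d(W_i)$), and, conversely, showing that the weak limit of the atomic minimizers $\nu_k$ still satisfies the \emph{continuous} constraint $\bar\nu\leq\mathcal{L}^d$ (done by spreading each atom back over its cell and invoking weak closedness). The remaining ingredients --- weak precompactness and the continuity of $\dist$ and of the total mass along weak convergence on the compact $\Omega$ --- are routine given Propositions \ref{prop:metric properties} and \ref{prop:existence step}.
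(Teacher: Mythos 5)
Your proof is correct and follows essentially the same line as the paper: extract a weak cluster point, show it is admissible (density $\leq 1$), pass the discrete optimality inequality to the limit using the discretization of a continuous competitor, and conclude by uniqueness. The small differences are cosmetic --- you establish admissibility by spreading each atom back over its cell and invoking weak closedness of $\{\nu:\nu\leq\mathcal L^d\}$, whereas the paper argues more directly that $\nu_k\leq\sum_i w_i\delta_{x_i}\weakto\mathcal L^d$ passes to the limit; and you compare against the specific minimizer $\nu^*$ rather than an arbitrary admissible $\sigma$, but the competitor $(T_{\mathcal W_k})_\#\nu^*$ is exactly $\nu^*_{\mathcal W_k}$, so the step is the same.
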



\begin{proof}
As a sequence of bounded measures on a compact domain, $(\nu_k)$ admits weak cluster points. Let $\bar{\nu}$ be one of them. The fact that for all $k$, $\nu_k$ is upper bounded by the discretized Lebesgue measure $\sum_i w_i \delta_{x_i}$ implies that $\bar{\nu}$ is upper bounded by the Lebesgue measure in $\mathbb{R}^d$, since the discretized Lebesgue measure weakly converges to the Lebesgue measure.
Now, let $\sigma \in \mathcal{M}_+(\Omega)$ be any measure of density bounded by $1$. By Proposition \ref{prop:minimizerdiscret}, one has for all $k \in \mathbb{N}$,
\begin{equation*}
\dist^2(\mu_{\mathcal{W}_k},\nu_k) - 2\tau \nu_{k}(\Omega) \leq \dist^2(\mu_{\mathcal{W}_k},\sigma_{\mathcal{W}_k}) - 2\tau \sigma_{\mathcal{W}_k}(\Omega)\, .
\end{equation*}
Since the distance $\dist$ and the total mass are continuous functions under weak convergence one obtains, in the limit $k\to \infty$,
\begin{equation*}
\dist^2(\mu,\bar{\nu}) - 2\tau \bar{\nu}(\Omega) \leq \dist^2(\mu,\sigma) - 2\tau \sigma(\Omega)
\end{equation*}
which proves that $\bar{\nu}$ minimizes \eqref{eq:timestepbis}. By Proposition~\ref{prop:existence step}, this minimizer is unique.
\end{proof}

\subsection{Entropic regularization and scaling algorithm}
The discrete optimization problem \eqref{eq:discreteproblem} is a smooth finite dimensional convex optimization problem with linear constraints which could be solved with classical tools. However, the dimension of the variable $\gamma$ is typically very big ($100^{2d}$ for uniformly discretized cube $[0,1]^d$ with grid spacing $0.01$). Since for our problem it is acceptable to solve \eqref{eq:discreteproblem} with an error which is negligible compared to the (time and space) discretization error, so we suggest to use more efficient methods based on entropic regularization.

Cuturi has shown in~\cite{CuturiSinkhorn} that, for solving the discrete optimal transport problem, adding the entropy of the coupling to the Kantorovich optimal transport functional, leads to a simple, parallelizable and linearly convergent algorithm for solving each step, known as Sinkhorn's algorithm. This algorithm has then been subsequently generalized \cite{benamou2015iterative}, applied to Wasserstein gradient flows \cite{peyre2015entropic}, and extended to unbalanced optimal transport problems \cite{chizat2016scaling}. The framework of the latter includes the functional \eqref{eq:discreteproblem}.  In \cite{chizat2016scaling,schmitzer2016stabilized}, it has been described how to take care of stability issues caused by small regularization parameter while preserving the nice structure of the algorithm, which makes it possible to solve \eqref{eq:discreteproblem} with high precision in a reasonable time.

\subsubsection{Algorithm and convergence}
%
The  method of entropic regularization consists in minimizing, instead of \eqref{eq:discreteproblem}, the strictly convex problem
\begin{equation}\label{eq:entropicreg}
\min_{\gamma\in \R^{N \times N}} J(\gamma) + \epsilon H(\gamma)
\end{equation}
where $J$ is defined in \eqref{eq:functionalJ}, $\epsilon>0$ is a small regularization parameter and $H$ is, as above, the relative entropy with respect to the discrete Lebesgue measure
$$
H(\gamma)\eqdef \begin{cases}
\sum_{i,j} (\gamma_{i,j}\log(\gamma_{i,j})-\gamma_{i,j}+1)w_iw_j &\text{if $\gamma_{i,j}\geq0$, for all $i,j$} \\
+\infty & \text{otherwise,}
\end{cases}
$$
with the convention $0\log 0 = 0$. Of course, one recovers the unregularized problem as $\epsilon\to 0$, as stated in the next Proposition whose proof is simple (see~\cite{chizat2016scaling}).
\begin{proposition}\label{prop:convergenceepsilon}
Denoting by $\gamma_\epsilon$ and $\gamma^*$ minimizers of \eqref{eq:entropicreg} and \eqref{eq:discreteproblem} respectively, one has 
\[
J(\gamma_\epsilon)-J(\gamma^*) \leq \epsilon (H(\gamma^*)-H(\gamma_\epsilon)) = o(\epsilon) 
\quad \text{ and } \quad \gamma_\epsilon \to \gamma^*.
\]
\end{proposition}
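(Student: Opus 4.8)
The plan is to run the standard Tikhonov-regularization / $\Gamma$-convergence argument. The first inequality is immediate: since $\gamma_\epsilon$ minimizes $J+\epsilon H$ and $\gamma^*$ is a competitor, $J(\gamma_\epsilon)+\epsilon H(\gamma_\epsilon)\le J(\gamma^*)+\epsilon H(\gamma^*)$, which rearranges to $J(\gamma_\epsilon)-J(\gamma^*)\le \epsilon\bigl(H(\gamma^*)-H(\gamma_\epsilon)\bigr)$; and since $\gamma^*$ minimizes $J$ we also have $J(\gamma^*)\le J(\gamma_\epsilon)$, so that $0\le J(\gamma_\epsilon)-J(\gamma^*)\le\epsilon\bigl(H(\gamma^*)-H(\gamma_\epsilon)\bigr)$, and in particular $H(\gamma_\epsilon)\le H(\gamma^*)$ for every $\epsilon>0$.

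Next I would set up the direct method in the finite-dimensional space $\R^{N\times N}$. Because $c\ge 0$, $F_1=H(\,\cdot\,|\alpha)$ is nonnegative and superlinear, and $F_2$ is bounded below, $J$ is coercive on $\R_+^{N\times N}$ (as $\|\gamma\|\to\infty$ one has $\|\gamma w\|\to\infty$ since all $w_j>0$, hence $F_1(\gamma w)\to\infty$), and $H$ is nonnegative, finite-valued, continuous and strictly convex on $\R_+^{N\times N}$; therefore $J+\epsilon H$ admits a unique minimizer $\gamma_\epsilon$ for every $\epsilon>0$, while $\argmin J$ is a nonempty compact convex set. Since $H$ is continuous and strictly convex it has a unique minimizer over $\argmin J$; this is the point $\gamma^*$ for which the proposition is stated (note that $J$ need not be strictly convex, since $F_2$ is affine on part of its domain, so $\gamma^*$ must be understood as the minimal-entropy element of $\argmin J$ rather than as "the" minimizer). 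From $H\ge 0$ and the optimality inequality, $J(\gamma_\epsilon)\le J(\gamma^*)+\epsilon H(\gamma^*)$, so for $\epsilon\le 1$ the family $(\gamma_\epsilon)$ lies in a fixed sublevel set of $J$ and is therefore bounded, hence precompact.

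I would then identify the limit. Let $\bar\gamma$ be any cluster point, say $\gamma_{\epsilon_k}\to\bar\gamma$ with $\epsilon_k\to 0$. Lower semicontinuity of $J$ together with $J(\gamma_{\epsilon_k})\le J(\gamma^*)+\epsilon_k H(\gamma^*)\to\min J$ gives $J(\bar\gamma)\le\min J$, i.e.\ $\bar\gamma\in\argmin J$. On the other hand $H(\gamma_{\epsilon_k})\le H(\gamma^*)$ and continuity of $H$ give $H(\bar\gamma)\le H(\gamma^*)$, and since $\gamma^*$ is the unique minimizer of $H$ over $\argmin J$ while $\bar\gamma\in\argmin J$, this forces $\bar\gamma=\gamma^*$. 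Every cluster point of the precompact family $(\gamma_\epsilon)$ therefore equals $\gamma^*$, whence $\gamma_\epsilon\to\gamma^*$. Finally, continuity of $H$ gives $H(\gamma_\epsilon)\to H(\gamma^*)$, so $\epsilon\bigl(H(\gamma^*)-H(\gamma_\epsilon)\bigr)=o(\epsilon)$, which combined with the first paragraph finishes the proof.

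I do not expect a genuine obstacle here: everything reduces to a routine application of compactness and lower semicontinuity in finite dimensions. The only point deserving care is the possible non-strict convexity of $J$, which is why the selected minimizer $\gamma^*$ is singled out by the minimal-entropy property rather than by uniqueness of $\argmin J$.
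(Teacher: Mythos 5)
Your proof is correct. The paper itself gives no argument here, deferring to \cite{chizat2016scaling}; your write-up is the standard, self-contained Tikhonov/entropic-regularization argument that that reference also carries out, so there is no methodological divergence to report. The two-sided bound $0\le J(\gamma_\epsilon)-J(\gamma^*)\le\epsilon\bigl(H(\gamma^*)-H(\gamma_\epsilon)\bigr)$, the consequent monotonicity $H(\gamma_\epsilon)\le H(\gamma^*)$, coercivity of $J$ via the superlinear $F_1$, precompactness of $(\gamma_\epsilon)_{\epsilon\le1}$, and the identification of every cluster point as the entropy-minimal element of $\argmin J$ are all sound, as is deducing $o(\epsilon)$ from $\gamma_\epsilon\to\gamma^*$ and continuity of $H$ in finite dimensions.

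Your parenthetical caveat is in fact a genuine precision worth keeping: $J$ need not be strictly convex (the cost term is linear and $F_2$ is only convex), so $\argmin J$ can be a nontrivial face, and the conclusion $\gamma_\epsilon\to\gamma^*$ together with $H(\gamma^*)-H(\gamma_\epsilon)=o(\epsilon)$ is only true when $\gamma^*$ is taken to be the unique entropy-minimal element of $\argmin J$, not an arbitrary minimizer. The paper's own Proposition~\ref{prop:minimizerdiscret} already acknowledges that the optimal $\gamma$ need not be unique (only the marginals and hence $\nu$ are), so your reading is the correct one and makes the statement unambiguous.
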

\begin{remark}
For classical optimal transport, precise convergence results of the minimizers are known in the continuous setting~\cite{leonard2012schrodinger}. The convergence of entropy regularized JKO schemes is also shown in~\cite{2017-carlier-SIMA} if the inequality $-\epsilon \log(\epsilon)\leq C\tau^2$ is preserved when taking the joint limit $\epsilon,\tau \to 0$ (on a continuous spatial domain).
\end{remark}
The algorithm we suggest to minimize \eqref{eq:entropicreg}, referred to as \emph{Iterative scaling algorithm} in~\cite{chizat2016scaling}, is then simple to write. It consists in letting $b^{(0)}=[1, \dots, 1]^T\in \R^N$ and iteratively computing
\begin{equation}\label{eq:scalingiter}
a^{(\ell+1)} = 
\frac{\prox^H_{F_1/\epsilon}(K(b^{(\ell)}\odot w))}{K(b^{(\ell)}\odot w)}
, \qquad 
b^{(\ell+1)} = 
\frac{ \prox^H_{F_2/\epsilon}(K^T(a^{(\ell+1)}\odot w)) }{ K^T(a^{(\ell+1)}\odot w) }
\end{equation}
where $K$ is the matrix $K \eqdef (e^{c(x_i,x_j)/\epsilon})_{i,j=1,\dots,N}$, the division is performed elementwise with the convention $0/0=0$, $\odot$ denotes elementwise multiplication and the proximal operator of a function $F$ with respect to the relative entropy is defined as
\begin{align*}
\prox^H_F &:  \R_+^N \ni a \mapsto \argmin_{b\in \R^N_+} \{ F(b) + H(b|a) \}.
\end{align*}
In our precise case, these iterates have the following explicit form
\begin{equation*}
a^{(\ell+1)} = \left(\frac{\alpha}{K(b^{(\ell)}\odot w))}\right)^{\tfrac{1}{1+\epsilon}}
\end{equation*}
and 
\begin{equation*}
b^{(\ell+1)} = \begin{cases}
(1-2\tau\lambda)^{\tfrac{-1}{\epsilon}} & \text{if $K^T(a^{(\ell+1)}\odot w) \leq (1-2\tau\lambda)^{\tfrac{1+\epsilon}{\epsilon}}$} \\
(K^T(a^{(\ell+1)}\odot w))^{\tfrac{-1}{1+\epsilon}} & \text{otherwise.}
\end{cases}
\end{equation*}
where exponentiation and comparison are performed elementwise and we recall that $\alpha\in \R^n$ is the vector describing the discretization of $\mu$. This algorithm can be interpreted as an alternate maximization in the dual variables. We sketch a proof of this fact, see~\cite{chizat2016scaling} for more details.
\begin{proposition}
The iterative scaling algorithm corresponds to alternate maximization on the dual problem.
\end{proposition}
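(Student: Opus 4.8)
The plan is to exhibit the scaling variables $a^{(\ell)},b^{(\ell)}$ as the exponentials of the two blocks of dual variables of the regularized problem \eqref{eq:entropicreg}, and then to verify that the two half-updates in \eqref{eq:scalingiter} are exactly the \emph{exact} partial maximizations of the dual objective in each of these blocks. I would only sketch the computation, pointing to \cite{chizat2016scaling} for the full details and for the convergence analysis.

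First I would dualize the marginal constraints in \eqref{eq:entropicreg}: writing $p=\gamma w$, $q=\gamma^T w$ and introducing Lagrange multipliers $u,v\in\R^N$ for the weighted equalities, the Lagrangian decouples. Minimizing over the entries $\gamma_{ij}\geq 0$ yields, through the stationarity condition $c(x_i,x_j)+\epsilon\log\gamma_{ij}+u_i+v_j=0$, the Gibbs form $\gamma_{ij}=a_iK_{ij}b_j$ with $a_i=e^{-u_i/\epsilon}$, $b_j=e^{-v_j/\epsilon}$ and $K$ the kernel appearing in \eqref{eq:scalingiter}; minimizing over $p$ and $q$ produces the Legendre transforms $F_1^*$ and $F_2^*$, computed for the pairing $\langle u,p\rangle_w=\sum_i u_ip_iw_i$. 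One is then left with the concave dual problem $\max_{u,v\in\R^N}\Psi(u,v)$ where
\[
\Psi(u,v) \eqdef -F_1^*(u)-F_2^*(v)-\epsilon\sum_{i,j}a_iK_{ij}b_j\,w_iw_j
\]
(up to an additive constant), the unique primal optimum being recovered as $\gamma^*_{ij}=a_iK_{ij}b_j$. Concavity of $\Psi$ comes from convexity of $F_1^*,F_2^*$ and of $(u,v)\mapsto e^{-(u_i+v_j)/\epsilon}$, so that the block maximizations below are uniquely solvable.

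Next I would freeze $v$ (equivalently $b$) and maximize $\Psi$ over $u$. The first-order condition $\partial_{u_i}\Psi=0$ reads $w_i(\gamma w)_i=w_i\alpha_i e^{u_i}$, i.e. the first marginal of the current Gibbs coupling must equal $\alpha\odot e^{u}$. Since $(\gamma w)_i=a_i\,(K(b^{(\ell)}\odot w))_i$ and $e^{u_i}=a_i^{-\epsilon}$, this is $a_i^{1+\epsilon}=\alpha_i/(K(b^{(\ell)}\odot w))_i$, that is
\[
a^{(\ell+1)}=\left(\frac{\alpha}{K(b^{(\ell)}\odot w)}\right)^{\tfrac{1}{1+\epsilon}}=\frac{\prox^H_{F_1/\epsilon}(K(b^{(\ell)}\odot w))}{K(b^{(\ell)}\odot w)},
\]
the last equality being the closed form of the relative-entropy proximal step of $F_1=H(\cdot\,|\,\alpha)$ already recorded after \eqref{eq:scalingiter}. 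This is precisely the first line of \eqref{eq:scalingiter}; the same computation with the two marginals exchanged gives the second line, the two-case expression of the $b$-update being nothing but the closed form of $\prox^H_{F_2/\epsilon}$, whose piecewise structure stems from the inner infimum over $s\in[0,1]^N$ in the definition of $F_2$. Hence one full pass of \eqref{eq:scalingiter} is exactly one sweep of block-coordinate ascent on $\Psi$.

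The main obstacle is bookkeeping rather than conceptual: one must carry the cell weights $w_i$ consistently through the duality pairing and the conjugates $F_1^*,F_2^*$, and one must treat $F_2$ with some care, since the inner minimization over $s$ makes it only piecewise smooth so that $F_2^*$ need not be differentiable everywhere; the partial maximization in $v$ is nonetheless uniquely attained and its (in general set-valued) optimality condition collapses to the explicit piecewise formula stated after \eqref{eq:scalingiter}. Convergence of the resulting alternating scheme would then follow from the general theory of block maximization of a smooth concave function, or directly from the linear-convergence results for scaling algorithms in \cite{chizat2016scaling}.
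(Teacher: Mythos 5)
Your proposal is correct and takes essentially the same route as the paper's sketch: both derive the dual of the entropic problem, identify $a,b$ with exponentials of the dual variables, and show that block-coordinate ascent on the two blocks reproduces the scaling iterates. The only cosmetic difference is that the paper frames each partial maximization as a Fenchel dual pair (identifying the prox operator via the primal-dual relation $s=e^{u/\epsilon}$), whereas you verify the same update by differentiating the dual objective directly and solving the first-order condition for $a^{(\ell+1)}$; you also use the opposite sign convention for the multipliers, which is harmless.
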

\begin{proof}[Sketch of proof]
The dual problem to \eqref{eq:entropicreg} reads (up to a constant)
\[
\max_{u \in \R^N, v \in \R^N} L(u,v) \eqdef \Big\{ -F_1^*(-u) - F_2^*(-v) - \epsilon \sum_{i,j} e^{\frac1\epsilon (u_i + v_j - c(x_i,y_j))}w_iw_j \Big\}
\]
where $F_1^*$ and $F_2^*$ are the convex conjugates of $F_1$ and $F_2$. Iterations  \eqref{eq:scalingiter} are obtained by performing alternate maximization on $u$ and $v$ successively. Indeed, for $v^{(\ell)}$ fixed, the partial maximization problem and its dual read
\[
\max_{u\in \R^N} -F_1^*(-u) -\epsilon \sum_i w_i e^{u_i/\epsilon} (K(b^{(\ell)}\odot w))_i
\quad \text{and} \quad
\min_{s\in \R^N}  F_1(s) + \epsilon H(s|K(b^{(\ell)}\odot w))
\]
respectively, with the primal dual relationship $s = (e^{u_i/\epsilon})_{i=1}^N$ at optimality.
Since the term coupling $(u,v)$ in the dual functional is smooth, it is known that $L(u^{(\ell)}, v^{(\ell)})$ converges to the maximum of $L$ at a guaranteed rate $O(1/\ell)$.
\end{proof}

In our specific instantiation, a linear rate of convergence can be proved by looking at the explicit form of the iterates.
\begin{proposition}\label{prop:algoconvergence}
The sequence of dual variables $v^{(\ell)} \eqdef \epsilon \log b^{(\ell)}$ converges linearly in $\ell_\infty$-norm to the optimal (regularized) dual variable $v_\epsilon$, more precisely
\[
\Vert v^{(\ell)} - v_\epsilon \Vert_\infty \leq \frac{\Vert v^{(1)} - v_\epsilon  \Vert_\infty}{(1+\epsilon)^{\ell}}.
\]
and the same holds true for $u^{(\ell)}\eqdef \epsilon \log a^{(\ell)}$. Moreover, the matrix $\gamma^{(\ell)} \eqdef (a^{(\ell)}_i K_{i,j} b^{(\ell)}_j)$ converges to the minimizer $\gamma_{\epsilon}$ of  \eqref{eq:entropicreg}.
\end{proposition}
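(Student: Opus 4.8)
The plan is to exploit the very simple closed form of the iterates obtained just above the statement. From the explicit expression
\[
b^{(\ell+1)} = \begin{cases}
(1-2\tau\lambda)^{-1/\epsilon} & \text{if } K^T(a^{(\ell+1)}\odot w) \leq (1-2\tau\lambda)^{(1+\epsilon)/\epsilon},\\
(K^T(a^{(\ell+1)}\odot w))^{-1/(1+\epsilon)} & \text{otherwise,}
\end{cases}
\]
and the analogous formula $a^{(\ell+1)} = (\alpha / K(b^{(\ell)}\odot w))^{1/(1+\epsilon)}$, I would pass to the log-variables $u^{(\ell)} = \epsilon\log a^{(\ell)}$, $v^{(\ell)} = \epsilon\log b^{(\ell)}$. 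In these variables the $a$-update becomes the affine-in-log map $u^{(\ell+1)} = \tfrac{1}{1+\epsilon}\big(\epsilon\log\alpha - \epsilon\log(K(e^{v^{(\ell)}/\epsilon}\odot w))\big)$, and the $b$-update is a truncated version of the same kind of contraction. The key structural observation is that the soft-min/soft-max operator $v \mapsto \epsilon\log\big(K^T(e^{u/\epsilon}\odot w)\big)$ is $1$-Lipschitz for the $\ell_\infty$-norm (a standard property of log-sum-exp: adding a constant $c$ to all entries of $u$ adds exactly $c$ to the output, and monotonicity gives the Lipschitz bound), and the pointwise truncation $\min\{\cdot,\cdot\}$ against a constant vector is also $1$-Lipschitz for $\ell_\infty$. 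Composing with the explicit factor $\tfrac{1}{1+\epsilon}$ coming from $F_1$ (resp. $F_2$), one step of the full iteration $v^{(\ell)}\mapsto v^{(\ell+1)}$ is a strict contraction of modulus $\tfrac{1}{1+\epsilon}$ in $\ell_\infty$-norm.

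Concretely, I would first show that the optimal regularized dual pair $(u_\epsilon,v_\epsilon)$ — which exists and is unique up to the usual considerations because \eqref{eq:entropicreg} is strictly convex and coercive, cf.\ Proposition \ref{prop:convergenceepsilon} and \cite{chizat2016scaling} — is a fixed point of the iteration map $\Psi: v\mapsto v^{(\ell+1)}$. This follows from the first-order optimality conditions of the dual problem, which are exactly the equations defining the iterates at convergence. Then, writing $\Psi = \Psi_2\circ\Psi_1$ where $\Psi_1$ is the $u$-update and $\Psi_2$ the $v$-update, I would check the Lipschitz estimate $\Vert \Psi_1(v) - \Psi_1(v')\Vert_\infty \leq \tfrac{1}{1+\epsilon}\Vert v-v'\Vert_\infty$ using the $1$-Lipschitz property of $u\mapsto \epsilon\log(K(e^{u/\epsilon}\odot w))$ together with the explicit $\tfrac{1}{1+\epsilon}$ prefactor; and similarly $\Vert \Psi_2(u)-\Psi_2(u')\Vert_\infty \leq \tfrac{1}{1+\epsilon}\Vert u-u'\Vert_\infty$, where the extra truncation against the constant $(1+\epsilon)\log(1-2\tau\lambda)$ is harmless since $t\mapsto\min\{t,c\}$ is nonexpansive. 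Hence $\Vert \Psi(v)-\Psi(v')\Vert_\infty \leq \tfrac{1}{(1+\epsilon)}\Vert v - v'\Vert_\infty$ — note that one of the two factors $\tfrac{1}{1+\epsilon}$ is "used up" to compensate the fact that $\Psi_1,\Psi_2$ each land in the respective variable, so the composite contraction modulus over one full cycle is $\tfrac{1}{1+\epsilon}$ as claimed (if one is more careful, one in fact gets $\tfrac{1}{(1+\epsilon)^2}$ per cycle, but $\tfrac{1}{1+\epsilon}$ is what is stated and certainly holds). Iterating from $v^{(1)}$ and using $v_\epsilon = \Psi(v_\epsilon)$ gives $\Vert v^{(\ell)} - v_\epsilon\Vert_\infty \leq (1+\epsilon)^{-(\ell-1)}\Vert v^{(1)} - v_\epsilon\Vert_\infty$; reindexing (or starting the count at the appropriate place) yields the displayed bound $\Vert v^{(\ell)}-v_\epsilon\Vert_\infty \leq (1+\epsilon)^{-\ell}\Vert v^{(1)}-v_\epsilon\Vert_\infty$, and the identical argument with the roles swapped gives the bound for $u^{(\ell)}$.

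Finally, the convergence $\gamma^{(\ell)}\to\gamma_\epsilon$ follows immediately: since $\gamma^{(\ell)}_{i,j} = e^{(u^{(\ell)}_i + v^{(\ell)}_j - c(x_i,y_j))/\epsilon}$ depends continuously on $(u^{(\ell)},v^{(\ell)})$, the $\ell_\infty$-convergence of the dual variables to $(u_\epsilon,v_\epsilon)$ transfers to convergence of $\gamma^{(\ell)}$ to $e^{(u_\epsilon \oplus v_\epsilon - c)/\epsilon} = \gamma_\epsilon$, the last identity being the primal–dual relation for the entropic problem \eqref{eq:entropicreg}. The main obstacle, and the only point requiring genuine care, is the Lipschitz analysis of the $b$-update: one must verify that the two-branch formula — the truncated branch and the power branch — glues into a globally $1$-Lipschitz map of $K^T(a^{(\ell+1)}\odot w)$ in log-coordinates, which amounts to checking that $t\mapsto \min\{\,-\log(1-2\tau\lambda),\, -\tfrac{1}{1+\epsilon}t + \text{const}\,\}$ (with $t$ the log of $K^T(a\odot w)$) is nonexpansive; since both the affine piece (slope $-\tfrac{1}{1+\epsilon}$, modulus $<1$) and the constant piece are nonexpansive and their pointwise minimum of two $1$-Lipschitz functions is $1$-Lipschitz, this goes through, but it is the step where the specific form of $F_2$ (the hard-congestion constraint) really enters.
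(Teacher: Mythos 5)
Your proof is correct and is, at its core, the same argument as the paper's: both exploit that each scaling update is a contraction for the ``log'' geometry. The paper phrases this via the Thompson metric $d_T(x,y)=\max_i|\log x_i/y_i|$ and cites the general fact (from nonlinear Perron--Frobenius theory, \cite[Chap.~2]{lemmens2012nonlinear}) that order-preserving or order-reversing maps of homogeneity degree $r$ are $|r|$-Lipschitz for $d_T$; since $d_T$ agrees with the $\ell_\infty$-norm in log-coordinates on strictly positive vectors, this is literally your log-sum-exp nonexpansion argument in a different dress. The only structural difference is the order of operations: the paper first proves the sequence $(b^{(\ell)})$ is Cauchy in $(\R_+^N,d_T)$ (a complete metric space), deduces it converges to a fixed point of the iteration, and only then identifies the limit with the optimal regularized dual via the optimality conditions; you instead start from the known existence and uniqueness of the regularized dual optimizer, note it is a fixed point, and then apply the contraction around it. Both routes are standard and equivalent here.

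One place your write-up is actually \emph{more careful} than the paper's: the degree-$r$-Lipschitz result the paper invokes does not literally apply to the truncated $b$-update, which is not homogeneous of any degree once the clipping at $(1-2\tau\lambda)^{-1/\epsilon}$ is in force; the paper simply writes $d_T(b^{(\ell+1)},b^{(\ell)})\leq d_T(a^{(\ell+1)},a^{(\ell)})$ without comment. Your explicit observation that the pointwise minimum of a constant map and a $\tfrac1{1+\epsilon}$-Lipschitz map is again $\tfrac1{1+\epsilon}$-Lipschitz in log-coordinates is exactly what is needed to close that gap, and in fact shows the per-cycle contraction modulus is really $\tfrac1{(1+\epsilon)^2}$, not just $\tfrac1{1+\epsilon}$. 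Do be aware that your ``reindexing'' remark at the end is not quite sufficient as stated --- naive iteration of a one-step modulus $\tfrac1{1+\epsilon}$ only gives $(1+\epsilon)^{-(\ell-1)}$, which is what the paper's chain of inequalities actually yields, slightly weaker than the displayed $(1+\epsilon)^{-\ell}$. The clean way to reach the displayed exponent is precisely to use the sharper $\tfrac1{(1+\epsilon)^2}$-per-cycle contraction that your analysis already provides, since $(1+\epsilon)^{-2(\ell-1)}\leq(1+\epsilon)^{-\ell}$ for $\ell\geq2$.
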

\begin{proof}
For two vectors $(x,y)$ in $\R_+^N$ sharing the same set of indices with positive entries $I\subset \{1,\dots, N\}$, the Thompson metric is defined as $d_T(x,y) \coloneqq \max_{i\in I} \vert \log x_i/y_i \vert$. It is known that order preserving (or reversing) maps of degree $r$ are $|r|$-Lipschitz for the Thompson metric~\cite[Chap. 2]{lemmens2012nonlinear}. In particular, looking at the form of the iterates, we deduce
\[
d_T(a^{(\ell+1)},a^{(\ell)}) \leq \frac1{1+\epsilon} d_T(b^{(\ell)},b^{(\ell-1)}) 
\quad \text{and} \quad
d_T(b^{(\ell+1)},b^{(\ell)}) \leq d_T(a^{(\ell+1)},a^{(\ell)}).
\]
It follows that the sequence $(b^{(\ell)})_{\ell\in \mathbb{N}}$ is Cauchy and thus converges since $(\R_+^M,d_T)$ is complete. Denoting $b_\epsilon$ the limit, it is a fixed point of the iterates and thus is of the form $b_\epsilon = e^{v_\epsilon/\epsilon}$ where $v_\epsilon$ is an optimal dual variable. The fixed point property also yields
\[
d_T(b^{(\ell+1)}, b_\epsilon) \leq \frac1{1+\epsilon}d_T(b^{(\ell)}, b_\epsilon) \leq \frac1{(1+\epsilon)^\ell}d_T(b^{(1)}, b_\epsilon)
\]
and the conclusion follows by remarking that all entries of $b^{(\ell)}$ are positive for all $\ell\in \mathbb{N}$ so $d_T(b^{(\ell)}, b_\epsilon)=\frac1\epsilon \Vert v^{(\ell)}-v_\epsilon \Vert_\infty$. The reasoning also works for the sequence $(a^{(\ell)})_{\ell \in \mathbb{N}}$ and we obtain the convergence (linear for $d_T$) of $\gamma^{(\ell)}$ which is the primal minimizer from the primal-dual relationships.
\end{proof}
Finally, given the optimal regularized coupling $\gamma_\epsilon$ one recovers the regularized discrete new step through (recall Proposition~\ref{prop:minimizerdiscret})
\[
(\nu_\epsilon)_i = \min \left\{ 1, \frac{\sum_j (\gamma_\epsilon)_{ij} w_j}{1-2\lambda\tau} \right\}.
 \]
In what follows, we take  $p_{\epsilon} \eqdef (2\tau -1 + e^{-v_\epsilon})/(2\tau)$ as the expression for the regularized pressure since in the regularized version of \eqref{eq:timestepbis}, it is the term in the subgradient of the upper bound constraint at optimality. However, we do not attempt to establish a rigorous convergence result of $p_\epsilon$ to the true pressure field.

\subsubsection{Stabilization}
While iterations \eqref{eq:scalingiter} are mathematically correct, we observe in practice that the entries of $a^{(\ell)}$ and $b^{(\ell)}$ become very big for small values of $\epsilon$ and rapidly go out of the range of the standard 64 bits floating point number representation in computers. While doing all the computations in the log-domain is not desirable since matrix/vector products would become very slow for big problems, a stabilization method which allows to maintain the efficiency of the initial algorithm is possible (we refer to \cite{chizat2016scaling} for the details). It consists in writing the variables of \eqref{eq:scalingiter} as $a^{(\ell)} = \tilde{a}^{(\ell)} \exp(\tilde u^{(\ell)}/\epsilon)$ where $\tilde{a}^{(\ell)}$ will be kept of the order of $1$ by being ``absorbed'' in $\tilde u^{(\ell)}$ from time to time during an absorption step as 
\begin{align*}
\tilde u^{(\ell+1)} &= \tilde u^{(\ell)}+\epsilon \log(\tilde{a}^{(\ell)}) \quad \text{and} \quad \tilde{a}^{(\ell+1)}=1 \\
\tilde v^{(\ell+1)} &= \tilde v^{(\ell)}+\epsilon \log(\tilde{b}^{(\ell)}) \quad \text{and} \quad \tilde{b}^{(\ell+1)}=1 \\
\tilde{K}^{(\ell+1)}_{i,j} &= \exp((\tilde u^{(\ell+1)}_i+ \tilde v^{(\ell+1)}_j-c(x_i,x_j))/\epsilon).
\end{align*}
With this double parametrization, the aborbed scaling iterations are written in Algorithm \ref{alg:mainalgo} where the function $\proxdiv$ is defined as
\begin{equation}
\proxdiv_F : (\R_+^N,\R^N, \R_+^*) \ni (a,u,\epsilon) \mapsto \prox^H_{F/\epsilon}(a\odot e^{-u/\epsilon}) \oslash a.
\end{equation}
where $\odot$ and $\oslash$ denote elementwise multiplication and division, with the convention $0/0=0$. By direct computations, one finds that this operator is explicit for the functions $F_1$ and $F_2$ (defined below \eqref{eq:functionalJ}).
\begin{proposition}\label{prop:explicitproxdiv}
One has
\begin{equation*}
\proxdiv_{F_1}(s,u,\epsilon) = (\alpha/ s)^{\tfrac{1}{1+\epsilon}} \odot e^{\tfrac{-u}{1+\epsilon}}
\end{equation*}
and 
\begin{equation*}
\proxdiv_{F_2}(s,u,\epsilon) = \begin{cases}
((1-2\tau\lambda)e^u)^{\tfrac{-1}{\epsilon}} & \text{if $s \leq (1-2\tau\lambda)^{\tfrac{1+\epsilon}{\epsilon}}e^{\tfrac{u}{\epsilon}}$} \\
(s\odot e^u)^{\tfrac{-1}{1+\epsilon}} & \text{otherwise.}
\end{cases}
\end{equation*}
\end{proposition}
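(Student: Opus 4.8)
The plan is a direct, coordinate-by-coordinate computation. Fix $(s,u,\epsilon)$ and set $\tilde s \eqdef s\odot e^{-u/\epsilon}$, so that $\proxdiv_{F_k}(s,u,\epsilon) = \prox^H_{F_k/\epsilon}(\tilde s)\oslash s$. Since $H(\cdot|\cdot)$, $F_1$ and $F_2$ are all sums over the index $i$, the problem $\prox^H_{F_k/\epsilon}(\tilde s) = \argmin_{\beta\in\R_+^N}\{\tfrac1\epsilon F_k(\beta) + H(\beta|\tilde s)\}$ decouples into $N$ independent scalar problems, each with a strictly convex, superlinear objective, hence a unique minimizer characterized by vanishing of the derivative. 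Using $\tfrac{\mathrm d}{\mathrm d\beta}H(\beta|\alpha_i)=w_i\log(\beta/\alpha_i)$, every stationarity equation will have the form ``(weighted sum of $\log\beta$) $=$ (weighted sum of constants)'', i.e.\ the optimal $\beta$ is a weighted geometric mean; dividing by $s_i$ then yields $\proxdiv$.

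For $F_1$, since $F_1=H(\cdot|\alpha)$, the $i$-th stationarity equation is $\tfrac1\epsilon\log(\beta/\alpha_i)+\log(\beta/\tilde s_i)=0$, whose solution $\beta=\alpha_i^{1/(1+\epsilon)}\tilde s_i^{\epsilon/(1+\epsilon)}$ becomes, after inserting $\tilde s_i=s_ie^{-u_i/\epsilon}$ and dividing by $s_i$, exactly $(\alpha_i/s_i)^{1/(1+\epsilon)}e^{-u_i/(1+\epsilon)}$, which is the claimed formula.

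For $F_2$, I would first expand $F_2(\beta)=\min_{\sigma\in[0,1]^N}\{H(\beta|\sigma)-2\lambda\tau\sum_i\sigma_iw_i\}$ so that $\prox^H_{F_2/\epsilon}(\tilde s)$ is a joint minimization over $(\beta,\sigma)$, still separable in $i$, and minimize first in $\sigma_i\in[0,1]$. The unconstrained optimum of $\sigma\mapsto H(\beta|\sigma)-2\lambda\tau\sigma w_i$ is $\sigma=\beta/(1-2\lambda\tau)$, so the box constraint is active precisely when $\beta>1-2\lambda\tau$, giving two regimes. In the interior regime ($\sigma_i=\beta/(1-2\lambda\tau)$) the per-coordinate value of $F_2$ collapses to the linear term $\beta\log(1-2\lambda\tau)$, so stationarity in $\beta$ reads $\tfrac1\epsilon\log(1-2\lambda\tau)+\log(\beta/\tilde s_i)=0$ and gives $\beta=\tilde s_i(1-2\lambda\tau)^{-1/\epsilon}$; in the boundary regime ($\sigma_i=1$) the per-coordinate value of $F_2$ is $\beta\log\beta-\beta+1-2\lambda\tau$ and stationarity gives $\beta=\tilde s_i^{\epsilon/(1+\epsilon)}$. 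Substituting $\tilde s_i=s_ie^{-u_i/\epsilon}$ and dividing by $s_i$ turns these into $((1-2\lambda\tau)e^{u_i})^{-1/\epsilon}$ and $(s_ie^{u_i})^{-1/(1+\epsilon)}$ respectively, and the dichotomy $\beta\le 1-2\lambda\tau$ is rewritten, by plugging in the explicit interior value $\beta=\tilde s_i(1-2\lambda\tau)^{-1/\epsilon}$, as the stated condition $s_i\le(1-2\lambda\tau)^{(1+\epsilon)/\epsilon}e^{u_i/\epsilon}$.

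I do not expect a genuinely hard step; the care is entirely in the bookkeeping. The three points to verify are: (i) that the candidate $\beta$ found in each regime actually satisfies the inequality defining that regime — this is what legitimizes converting the condition on $\beta$ into the condition on $s$ and what makes the two branches agree at the threshold $\beta=1-2\lambda\tau$; (ii) that partial minimization in $\sigma$ preserves convexity, so that the joint problem in $(\beta,\sigma)$ is convex and the first-order conditions are sufficient — this is exactly the convexity of $F_2$ already noted after \eqref{eq:functionalJ}; and (iii) the degenerate entries ($s_i=0$, $\alpha_i=0$, or $\tilde s_i=0$), which are handled by the conventions $0\log 0=0$ and $0/0=0$ and for which the formulas hold trivially.
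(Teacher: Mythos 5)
Your proposal is correct and is, in substance, the proof. The paper itself offers no argument beyond the remark preceding the proposition ("By direct computations, one finds that this operator is explicit"), so the coordinatewise first-order-condition computation you carry out is exactly the missing content, and there is really no alternative route to compare against. A few micro-observations to tighten the writeup: your dichotomy check at the threshold works because the partially minimized per-coordinate $F_2$ is $C^1$ at $\beta=1-2\lambda\tau$ (the one-sided derivatives $\log(1-2\lambda\tau)$ and $\log\beta\big|_{\beta=1-2\lambda\tau}$ agree), which together with convexity of the partial minimum guarantees the stationarity condition is necessary and sufficient in each regime; this is worth saying explicitly rather than leaving implicit in your item (i)/(ii). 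Also, the $w_i$ factors drop out of every stationarity equation since they multiply both $H(\cdot|\tilde s_i)$ and $F_k$, which is why the formulas are $w$-free — a one-line remark avoids the appearance of bookkeeping being swept under the rug. Finally, your item (iii) slightly overstates the degenerate case: if $s_i=0$ but $\alpha_i>0$ the displayed formula for $\proxdiv_{F_1}$ reads $(\alpha_i/0)^{1/(1+\epsilon)}$, which does not reduce to the true value $0$ under the convention $0/0=0$; the correct observation is that in Algorithm~\ref{alg:mainalgo} the argument $s$ is always of the form $K(b\odot w)$ or $K^T(a\odot w)$ with $K$ entrywise positive, so $s_i=0$ simply never occurs and the degenerate branch is vacuous rather than "trivially satisfied."
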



\begin{remark}[Remarks on implementation]
For small values of $\epsilon$, most entries of $K$ are below machine precision at initialization. Thus one needs to first approximately solve the problem \eqref{eq:entropicreg} with higher values of $\epsilon$ in order to build a good initial guess and ``stabilize'' the values of $K$. These steps are included in Algorithm \ref{alg:mainalgo}. Typically, we start with $\epsilon=1$, stop the algorithm after a few iterations, divide $\epsilon$ by $10$, and repeat until the desired value for $\epsilon$ is reached. Then only start the ``true'' scaling iterations for which a meaningful stopping criterion has to be chosen (we use $|\log \tilde a^{(\ell+1)} - \log \tilde a^{(\ell)}|_{\infty}<10^{-6}$ in our experiments). 
\end{remark}

\begin{algorithm}
\caption{Compute one step of the flow}\label{alg:mainalgo}
\begin{enumerate}
\item input $(\alpha_i)_{i=1}^N$ the discrete density of $\mu_k$,  $(w_i)_{i=1}^N$ the discretized Lebesgue measure and $E$ an array of decreasing values for the regularization parameter $\epsilon$
\item initialize $(u,v) \gets (0_N,0_N)$
\item for $\epsilon$ in $E$:
\begin{enumerate}
\item $\tilde{b} \gets \ones_N$
\item for all $i,j$, $\tilde{K}_{ij} \gets \exp((\tilde u_i + \tilde v_j-c(x_i,x_j))/\epsilon)$ 
\item while stopping criterion not satisfied:
\begin{enumerate}
\item $\tilde{a} \gets \proxdiv_{F_1}(\tilde{K}(\tilde{b} \odot w),\tilde u,\epsilon)$
\item $\tilde{b} \gets \proxdiv_{F_2}(\tilde{K}^T(\tilde{a} \odot w),\tilde v,\epsilon)$
\item if $\max(|\log \tilde{a}|,|\log \tilde{b}|)$  is too big or if last iteration:
\begin{enumerate}
\item $(\tilde u,\tilde v) \gets (\tilde u+\epsilon \log \tilde{a}, \tilde v + \epsilon \log \tilde{b})$
\item  for all $i,j$, $\tilde{K}_{ij} \gets \exp((\tilde u_i + \tilde v_j-c(x_i,x_j))/\epsilon)$ 
\item  $\tilde{b} \gets \ones_N$
\end{enumerate}
\end{enumerate}
\end{enumerate}
\item for all $i,j$, $\gamma_{i,j} \gets \exp((\tilde u_i + \tilde v_j-c(x_i,x_j))/\epsilon)$ for all $i,j$.
\item define $\beta = (\min \{ 1, (\sum_{j} \gamma_{i,j} w_j)/(1-2\lambda\tau)\})_{i=1}^N$ the discrete density of $\mu_{k+1}$
\item  define $p = (2\tau-1+e^{-\tilde v})/(2\tau)$ the new pressure
\end{enumerate}
\end{algorithm}

\subsubsection{Some remarks on convergence of the scheme}\label{sss:discretization parameters}
Gathering results from the previous Sections, we have proved that the scheme solves the tumor growth model \eqref{eq:mainPDE} when 
\begin{itemize}
\item the number of iterations $\ell\to \infty$ (Prop.~\ref{prop:algoconvergence});
\item the entropic regularization $\epsilon\to 0$ (Prop.~\ref{prop:convergenceepsilon});
\item the spatial discretization $ \diam \mathcal{W}\to 0$ (Prop. \ref{prop:convergencespatialdiscret});
\item the time step $\tau\to 0$  (Prop. \ref{prop:gradientflowexistence}).
\end{itemize}
successively, in this order. In practice, one has to fix a value for these parameters. We did not provide explicit error bounds for all these approximations, but it is worth highlighting that a bad choice leads to.a bad output.
As already known for Wasserstein gradient flows \cite[Remark 4]{maury2015pressureless}, there is for instance a \emph{locking} effect when the discretization $ \diam \mathcal{W}$ is too coarse compared to the time step $\tau$. In this case, the cost of moving mass from one discretization cell to another is indeed big compared to the gain it results in the functional. 

Let us perform some numerical experiments for one step of the flow ( the study of the effect of $\tau$ is postponed to the next Section). We fix a time step $\tau=0.005$, a domain $\Omega = [0,1]$, an initial density $\rho_0$ which is the indicator of the segment $[0.1,0.9]$ and use a uniform spatial discretization $(W_i,x_i) = ([\frac{i-1}{N}, \frac{i}{N}[,\frac{2i-1}{2N})$ for $i\in\{1,\dots,N\}$. The scaling iterations are stopped as soon as  $|\log a^{(\ell+1)}- \log a^{(\ell)}|_\infty<10^{-6}$.

We first run a reference computation, with very fine parameters ($N=8192$ and $\epsilon = 2\times 10^{-7}$) and then compare this with what is obtained with degraded parameters, as shown on Figure \ref{fig:taufixed}. On Figure \ref{fig:taufixed}(a)-(b), the error on the radius of the new step $r=(\sum_i \beta_i)/2N$ and the $\ell_\infty$ error on the pressure (with respect to the reference computation) are displayed. Since the initial density is the indicator of a segment, the new step is expected to be also the indicator of a segment, see Section \ref{sec:sphere}. On Figure \ref{fig:taufixed}(c), we display the left frontier of the new density and observe how it is smoothed when $\epsilon$ increases (the horizontal scale is strongly zoomed).
 
\begin{figure}
\centering
\subcaptionbox{Error on radius}[.3\linewidth]{
 	\resizebox{.3\linewidth}{!}{
        \includegraphics{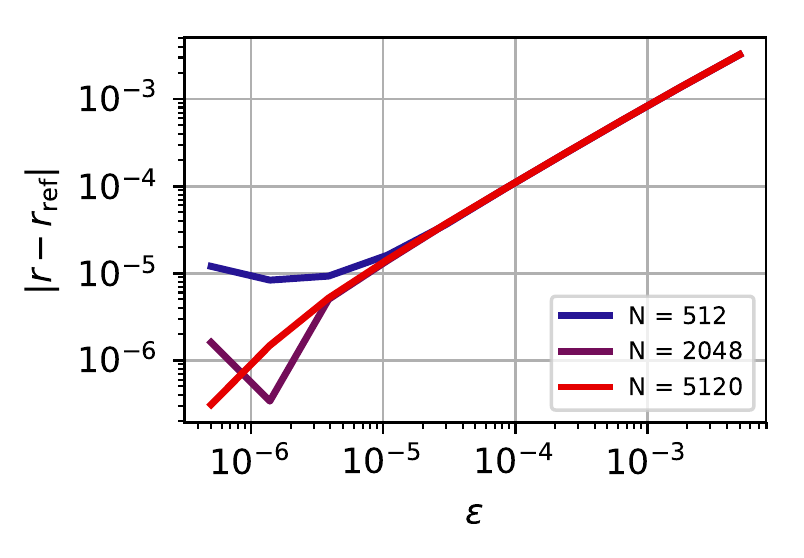}
	}}%
      \subcaptionbox{Error on pressure}[.3\linewidth]{
	\resizebox{.3\linewidth}{!}{
        \includegraphics[width=\textwidth]{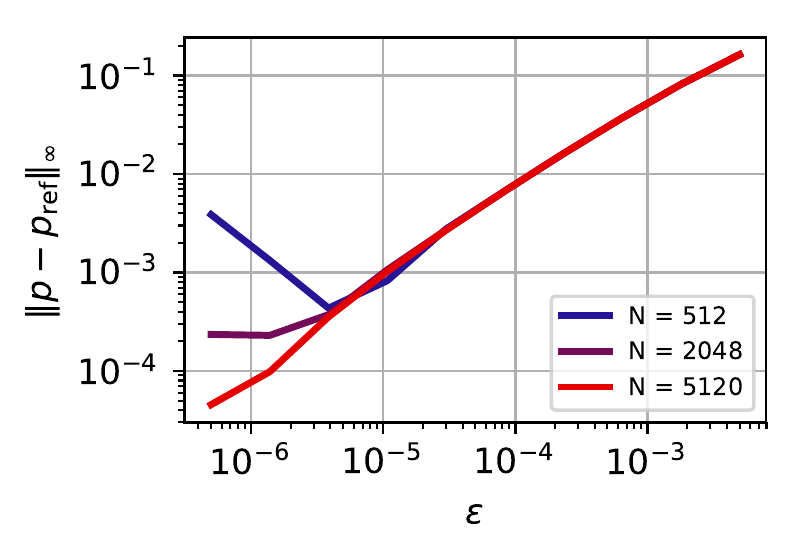}
	}}
	\subcaptionbox{Smoothing effect}[.3\linewidth]{
	\resizebox{.3\linewidth}{!}{
        \includegraphics[width=\textwidth]{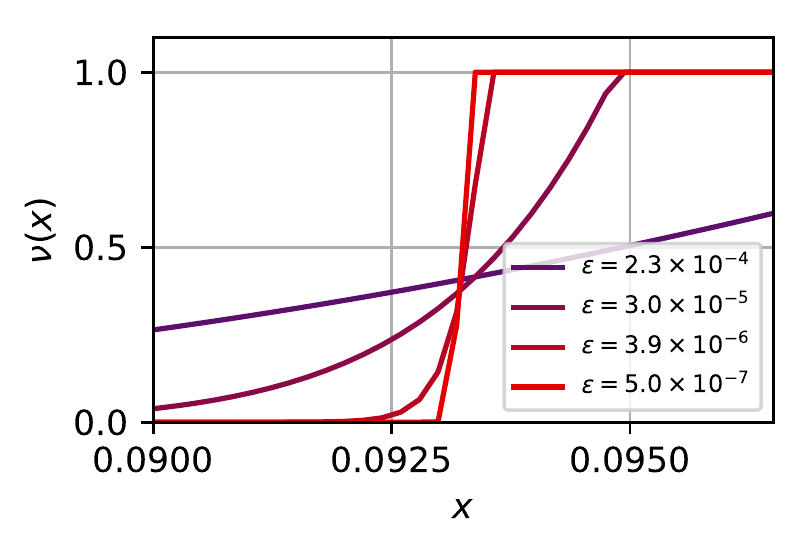}
	}}
	 \caption{Effect of discretization and entropic regularization}
	 \label{fig:taufixed}
\end{figure}

\section{Test case : spherical solutions}\label{sec:sphere}
In this section, we show that when the initial condition $\rho_0$ has unit density on a sphere and vanishes outside, then the solution of \eqref{eq:mainPDE} are explicit, using Bessel functions. Knowing this exact solutions allows us to assess the quality of the numerical algorithm.

\subsection{Explicit solution}

Let us construct the explicit solution for $\rho_0(x)= \chi_{B(0,r)}$. For $\alpha > -1$, we define the modified Bessel function of the first kind:
$$I_{\alpha} (x) \eqdef \sum_{m=0}^{\infty} \frac { 1}{ m! \Gamma ( m+ \alpha +1 )} \left ( \frac x 2 \right) ^{2m+\alpha} $$
and
$$H_{\alpha} (x) \eqdef x^{-\alpha} I_{\alpha}(x) \quad \text{and} \quad K_{\alpha}(x) \eqdef x^{\alpha} I_{\alpha}(x)\, .$$
The following mini-lemma states properties of these functions that are relevant here.
\begin{lemma}\label{lem:bessel}With the definitions as above, we have the followings:
\begin{itemize}
\item[(i)] $y = I_{\alpha}(x)$ satisfies the equation $ x^2 y''+ x y' - (x^2-\alpha^2) y =0$;
\item[(ii)] $y=H_{\alpha}(\beta x)$ satisfies the equation $ x^2 y '' + (2 \alpha +1 )x y' - \beta^2 x^2 y =0$ and, up to constants is the unique locally bounded at $0$; 
\item[(iii)] $H'_{\alpha}(x)=xH_{\alpha+1}(x)$ and $K'_{\alpha+1}(x) =x K_{\alpha} (x)$.
\end{itemize}
\end{lemma}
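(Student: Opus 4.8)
The plan is to deduce all three statements from the defining power series, whose radius of convergence is infinite. Consequently the series for $H_\alpha=x^{-\alpha}I_\alpha$ and for $K_\alpha=x^{\alpha}I_\alpha$, together with all of their termwise derivatives, converge locally uniformly, so that the differentiations and index shifts performed below are legitimate; the only arithmetic input used repeatedly is the Gamma recursion $\Gamma(z+1)=z\,\Gamma(z)$.

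I would handle the two differentiation formulas of (iii) first, since they are pure bookkeeping. Writing $H_\alpha(x)=\sum_{m\ge0}c_m x^{2m}$ with $c_m=\bigl(m!\,\Gamma(m+\alpha+1)\,2^{2m+\alpha}\bigr)^{-1}$, termwise differentiation gives $H_\alpha'(x)=\sum_{m\ge1}2m\,c_m x^{2m-1}$; the substitution $m=k+1$ together with $2(k+1)\,c_{k+1}$ simplified via the Gamma recursion identifies these coefficients with those of $x\,H_{\alpha+1}(x)$. The identity $K_{\alpha+1}'(x)=x\,K_\alpha(x)$ comes out the same way from $K_\alpha(x)=\sum_{m\ge0}x^{2m+2\alpha}\bigl(m!\,\Gamma(m+\alpha+1)\,2^{2m+\alpha}\bigr)^{-1}$: differentiating $K_{\alpha+1}$ produces a factor $2m+2\alpha+2=2(m+\alpha+1)$ which is absorbed using $\Gamma(m+\alpha+2)=(m+\alpha+1)\Gamma(m+\alpha+1)$, leaving exactly $x\,K_\alpha(x)$.

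For (i) I would substitute the series $I_\alpha(x)=\sum_m b_m x^{2m+\alpha}$, $b_m=\bigl(m!\,\Gamma(m+\alpha+1)2^{2m+\alpha}\bigr)^{-1}$, into the operator $x^2\partial_x^2+x\partial_x-(x^2+\alpha^2)$ and read off the coefficient of $x^{2m+\alpha}$: it equals $\bigl((2m+\alpha)^2-\alpha^2\bigr)b_m-b_{m-1}=4m(m+\alpha)b_m-b_{m-1}$, which vanishes for every $m$ by the Gamma recursion (the $m=0$ term being $0$ automatically); this is the classical modified Bessel equation. For the equation in (ii), rather than redoing a series computation, I would plug $u:=x^{\alpha}y$ with $u=I_\alpha$ — i.e. $y=H_\alpha$ — into the equation from (i): the zeroth-order terms cancel, leaving $x^2y''+(2\alpha+1)xy'-x^2y=0$, and the rescaling $x\mapsto\beta x$ then yields precisely the stated equation for $H_\alpha(\beta x)$.

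The one part that is not a formal computation, and the step I expect to be the main obstacle, is the uniqueness claim in (ii). Here I would invoke the Frobenius theory at the regular singular point $x=0$: the indicial polynomial of $x^2y''+(2\alpha+1)xy'-\beta^2x^2y=0$ is $r(r+2\alpha)$, so up to a constant there is exactly one solution holomorphic at the origin and not vanishing there, namely $H_\alpha(\beta x)$, while any linearly independent solution has leading behaviour $x^{-2\alpha}$ (with an additional $\log x$ when $\alpha=0$) and is therefore singular at $0$; hence $H_\alpha(\beta x)$ is the unique locally bounded solution up to scaling. Some care is needed in the resonant cases where $2\alpha$ is a nonnegative integer, and strictly speaking the dichotomy is cleanest for $\alpha\ge0$ — which is anyway the range arising from bounded radial profiles in $\R^d$ ($\alpha=\tfrac d2-1$, so $\alpha\ge0$ for $d\ge2$), the remaining case being pinned down by imposing evenness of the radial profile.
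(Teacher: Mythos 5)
Your proposal is correct and follows essentially the same route as the paper: termwise series manipulations with the Gamma recursion for (i) and (iii), and a Frobenius-type argument at the regular singular point for the uniqueness in (ii). One small improvement over the paper's sketch is worth flagging: the paper asserts that the other independent solutions ``explode like $\log x$'' near $0$, which is literally true only when $\alpha=0$; your indicial-polynomial analysis ($r(r+2\alpha)=0$, hence a second solution with leading behaviour $x^{-2\alpha}$, with a $\log$ only in resonant cases) is the accurate statement, and your remark restricting attention to $\alpha=\tfrac d2-1\ge0$ (with the $d=1$ case handled by evenness) correctly pins down the range actually used.
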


\begin{proof} The proof that $I_{\alpha}$ satisfies the equation is trivial and can be done coefficient by coefficient since everything is converging absolutely. Moreover it is known that all the other independent functions explode like $\log (x)$ near zero. Also the equation for $H_{\alpha}$ is easy to derive and it is clear that it is the unique regular one. As for (ii) it can be deduced straightly deriving their formulas (using $\Gamma(x+1)= x \Gamma(x)$ where required) from the definition of $I_{\alpha}$
\end{proof}

With the help of these Bessel functions, one can give the explicit solution when the initial condition is the indicator of a ball. Some properties of these solutions are displayed on Figure \ref{fig:explicitball}.
\begin{proposition}\label{prop:explicitball}
Consider the initial condition $\rho_0 = \chi_{B_{r_0}}$ for some $r_0>0$. Then there is a unique solution for \eqref{eq:mainPDE} which is the indicator of an expanding ball $\rho_t = \chi_{B_{r(t)}}$, where the radius evolves as
\[
r(t) = \frac12 K_{d/2}^{-1} \left[ e^{4\lambda t} K_{d/2}(2 r_0) \right]\, .
\]
Moreover, the pressure is radial and given by $p_t(x) = \lambda\left(1 - \frac{H_{d/2-1}(2|x|)}{H_{d/2-1}(2r(t))}\right)_+$.
\end{proposition}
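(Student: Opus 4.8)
The plan is to construct by hand a weak solution of the announced form and then invoke uniqueness. \textbf{Step 1 (reduction via uniqueness).} Fix $T>0$; since the radius we find will be increasing, I would choose a bounded convex domain $\Omega$ large enough that $\overline{B_{r(T)}}\subset\Omega$, and observe that by Proposition~\ref{prop:uniqueness} the problem \eqref{eq:mainPDE} has at most one solution on $[0,T]$, one which moreover never touches $\partial\Omega$. So it suffices to exhibit one solution $(\rho_t,p_t)$ with $\rho_t=\chi_{B_{r(t)}}$, $r(0)=r_0$, and $p_t$ radial. Outside $B_{r(t)}$ the constraints $p_t(1-\rho_t)=0$ and $0\le\rho_t\le1$ are automatic once $p_t$ vanishes there; requiring $p_t\in H^1$ forces continuity, hence $p_t=0$ on $\partial B_{r(t)}$ as well.

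\textbf{Step 2 (the pressure profile).} Inside $B_{r(t)}$ we have $\rho_t\equiv1$, hence $\partial_t\rho_t=0$ and the PDE reduces to $-\Delta p_t=4(\lambda-p_t)_+$. Setting $q:=\lambda-p_t$ and anticipating $q>0$, this becomes $\Delta q=4q$, which for a radial $q=q(s)$ reads $q''+\tfrac{d-1}{s}q'=4q$. I would match this with Lemma~\ref{lem:bessel}(ii): taking $\alpha=\tfrac d2-1$ (so that $2\alpha+1=d-1$) and $\beta=2$ identifies the unique solution bounded at the origin, $q(s)=c\,H_{d/2-1}(2s)$. Since $H_{d/2-1}$ is a power series in $s^2$ with positive coefficients it is positive and strictly increasing, so the boundary condition $q(r(t))=\lambda$ determines $c=\lambda/H_{d/2-1}(2r(t))>0$ and gives $p_t(x)=\lambda\big(1-H_{d/2-1}(2|x|)/H_{d/2-1}(2r(t))\big)_+$, the positive part encoding the extension by $0$. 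Monotonicity of $H_{d/2-1}$ then yields $0\le p_t\le\lambda$ on $B_{r(t)}$, so $q>0$ as assumed and $(\lambda-p_t)_+=q$ there, confirming the reaction term equals $4q\rho_t$.

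\textbf{Step 3 (the law of the radius).} It remains to pick $r(t)$ so that the weak formulation holds: for $\phi\in C^\infty_c$, $\tfrac{d}{dt}\int_{B_{r(t)}}\phi=\int_{B_{r(t)}}(-\nabla\phi\cdot\nabla p_t+4\phi\,q)$. Writing $\int_{B_{r(t)}}\phi=\int_0^{r(t)}\!\big(\int_{\partial B_s}\phi\big)ds$ makes the left side $\dot r\int_{\partial B_{r(t)}}\phi$; on the right, integrating $-\int\nabla\phi\cdot\nabla p_t$ by parts and using $\Delta p_t=-4q$ in $B_{r(t)}$, the bulk terms cancel and one is left with $-p_t'(r(t)^-)\int_{\partial B_{r(t)}}\phi$. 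By Lemma~\ref{lem:bessel}(iii), $\tfrac{d}{ds}H_{d/2-1}(2s)=4s\,H_{d/2}(2s)$, so $p_t'(r^-)=-4\lambda r\,H_{d/2}(2r)/H_{d/2-1}(2r)$, and matching forces the ODE $\dot r=4\lambda r\,H_{d/2}(2r)/H_{d/2-1}(2r)$ (positive, so $r$ is indeed increasing). To integrate it I would use $K_\alpha(x)=x^{2\alpha}H_\alpha(x)$ together with $K_{d/2}'(x)=xK_{d/2-1}(x)$ (Lemma~\ref{lem:bessel}(iii)) to compute $\tfrac{d}{dt}K_{d/2}(2r)=2^d r^{d-1}H_{d/2-1}(2r)\,\dot r=4\lambda\,(2r)^d H_{d/2}(2r)=4\lambda K_{d/2}(2r)$, whence $K_{d/2}(2r(t))=e^{4\lambda t}K_{d/2}(2r_0)$; since $K_{d/2}$ is strictly increasing, hence invertible, on $[0,\infty)$, this gives $r(t)=\tfrac12 K_{d/2}^{-1}\big[e^{4\lambda t}K_{d/2}(2r_0)\big]$ with $r(0)=r_0$.

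\textbf{Step 4 (wrap-up) and main obstacle.} Finally I would check that the pair $(\rho_t,p_t)$ has all the required properties — $0\le\rho_t\le1$, $p_t\ge0$, $p_t(1-\rho_t)=0$, $\rho_0=\chi_{B_{r_0}}$, and $p\in L^2([0,T],H^1(\Omega))$ (indeed $p_t$ is Lipschitz in $x$ uniformly on $[0,T]$ and $C^1$ in $t$) — so that it is a genuine solution of \eqref{eq:mainPDE}, and conclude by uniqueness. I do not expect a conceptual obstacle: the argument is essentially a verification. The delicate points are purely computational — correctly matching $\Delta q=4q$ to the Bessel equation of Lemma~\ref{lem:bessel} (pinning down $\alpha=d/2-1$ and $\beta=2$), the interface bookkeeping in the weak formulation (justifying $\tfrac{d}{dt}\int_{B_{r(t)}}\phi=\dot r\int_{\partial B_{r(t)}}\phi$ and the cancellation after integration by parts), and juggling the identities $H_\alpha'(x)=xH_{\alpha+1}(x)$, $K_{\alpha+1}'(x)=xK_\alpha(x)$, $K_\alpha=x^{2\alpha}H_\alpha$ when integrating the radius ODE.
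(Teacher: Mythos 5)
Your argument is correct and follows essentially the same route as the paper: assume the radial ansatz $\rho_t=\chi_{B_{r(t)}}$, reduce to the elliptic problem $-\Delta p_t=4(\lambda-p_t)$ in $B_{r(t)}$, identify the bounded-at-origin radial solution via Lemma~\ref{lem:bessel} (with $\alpha=d/2-1$, $\beta=2$), and integrate the resulting ODE for $r(t)$ using the $H_\alpha/K_\alpha$ identities. The one place you do something genuinely different is in obtaining the free-boundary law $\dot r = -p_t'(r(t)^-)$: the paper asserts the Stefan condition $r'(t)=\partial p_t/\partial n$ as a known fact, whereas you derive it by testing the weak formulation against $\phi\in C_c^\infty$, writing $\tfrac{d}{dt}\int_{B_{r(t)}}\phi=\dot r\int_{\partial B_{r(t)}}\phi$, integrating $-\int_{B_{r(t)}}\nabla\phi\cdot\nabla p_t$ by parts and observing that $\Delta p_t=-4q$ kills the bulk term. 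This is a short extra computation but it makes the step self-contained rather than cited as folklore, and it is exactly the kind of verification one needs anyway to confirm that the constructed pair really is a weak solution in the sense of the paper's remark after Theorem~\ref{maintheorem}. Your handling of uniqueness is also slightly more explicit (fix $T$, take a convex $\Omega\supset \overline{B_{r(T)}}$, invoke Proposition~\ref{prop:uniqueness}), versus the paper's one-line citation; both are sound, and yours makes visible the implicit choice of domain.
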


\begin{figure}
\centering
\subcaptionbox{Radius vs time}[.3\linewidth]{
 	\resizebox{.3\linewidth}{!}{
        \includegraphics{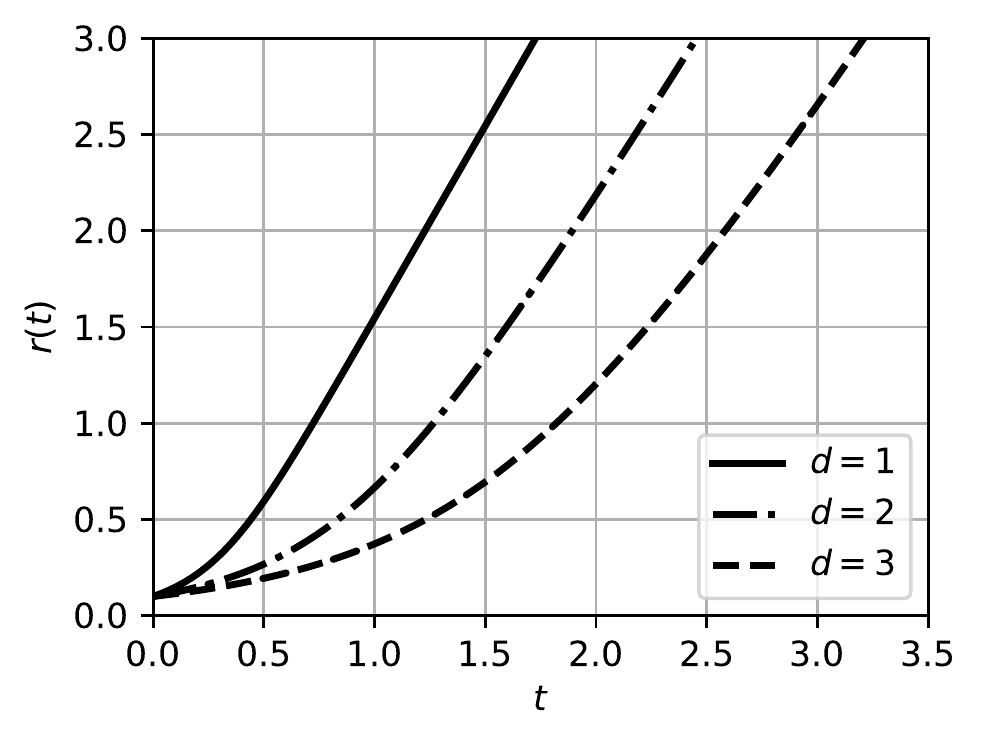}
	}}%
      \subcaptionbox{Pressure at origin vs time}[.3\linewidth]{
	\resizebox{.3\linewidth}{!}{
        \includegraphics[width=\textwidth]{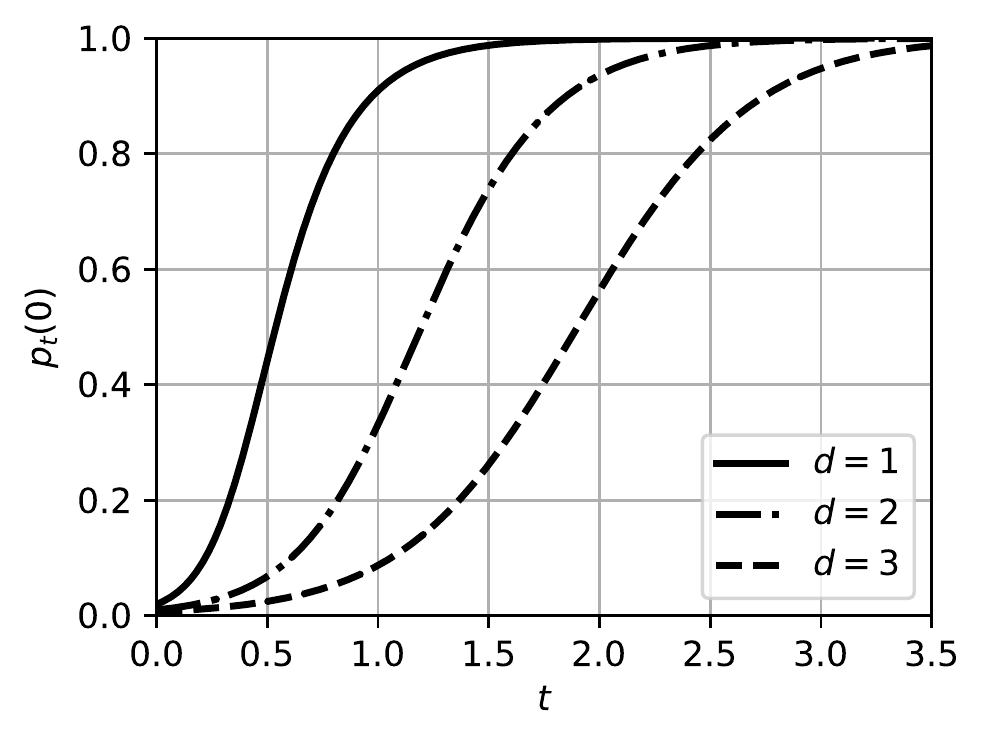}
	}}
	\subcaptionbox{Pressure vs distance from origin (fixed radius).}[.3\linewidth]{
	\resizebox{.3\linewidth}{!}{
        \includegraphics[width=\textwidth]{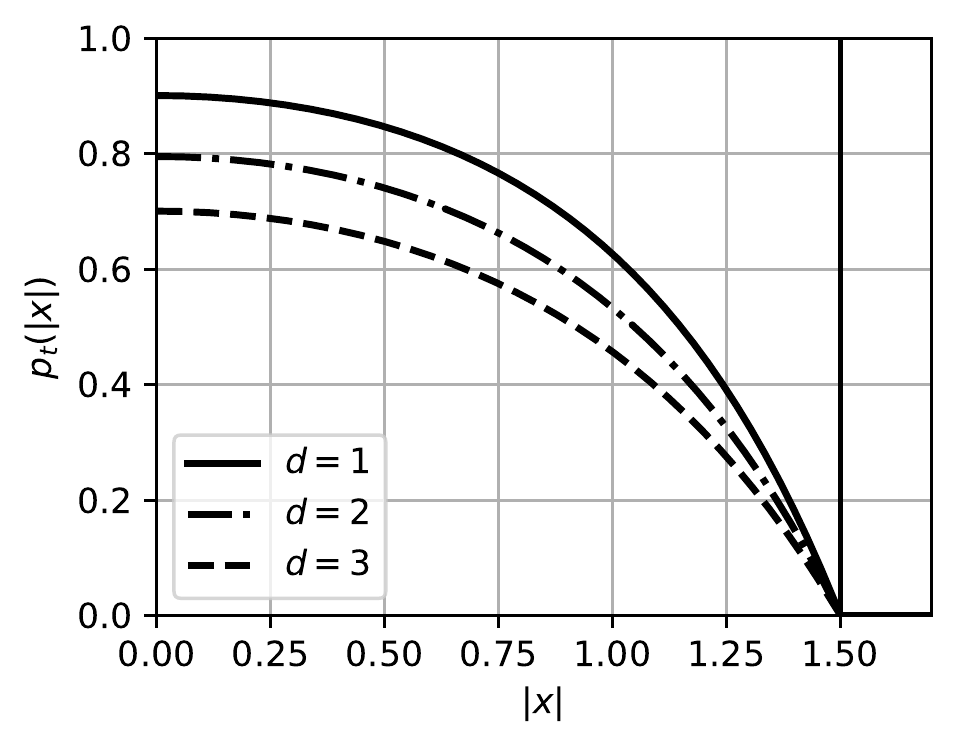}
	}}
	 \caption{Some properties of the spherical solutions, computed with the explicit formulae of Proposition \ref{prop:explicitball} in dimensions $d=1, 2 \text{ or } 3$. In all cases, $\lambda=1$, for (a)-(b) the initial condition is $r_0=0.1$ and for (c) the density radius is $r=1.5$.}
	 \label{fig:explicitball}
\end{figure}


\begin{proof}
Taking $\beta>0$, let us solve the evolution for the equation
$$ \partial \rho_t - \nabla \cdot ( \nabla p_t \rho_t ) = \beta^2 ( \lambda - p_t ) \rho_t \quad \text{and} \quad p_t(1-\rho_t)=0$$
(the Proposition is stated for $\beta=2$). In this case we suppose everything is radial, in particular we guess that $\rho_t= \chi_{B_{r(t)}}$ and also the pressure is radial. The pressure $p_t$ will depend only on $r(t)$ and it is the only function that satisfies
$$ \begin{cases}  -\Delta p = \beta^2(\lambda - p) \qquad & \text{ in }B_{r(t)} \\ p=0 & \text{ on }\partial B_{r(t)}. \end{cases}$$
Again, by symmetry we can suppose that $p_t(x)= f_t(|x|)$ where $f_t:\R_+ \to \R$ satisfies, with the expression of the Laplacian in spherical coordinates,
\[
\begin{cases}
f''_t(s) + \frac {d-1}{s} f'_t(s) - \beta^2 (f_t(s)- \lambda)=0 & \text{if $s\in [0,r(t)[$},\\
f_t(s) = 0 &\text{if $s\geq r(t)$} .
\end{cases}
\]
When we impose that $f_t'(0)=0$, if we consider $g=f-\lambda$ then we can see that the solution, assuming its smoothness and using Lemma \ref{lem:bessel}, is
$$g_t(s) = C_t H_{\alpha }(\beta s),$$
for some $C_t$ and $\alpha= \frac d2-1$. Then the condition $f_t(r(t))=0$ implies that $g_t(r(t))=-\lambda$ and this fixes $C_t=-\lambda / H_{\alpha}(\beta r(t))$. Now we have that $r'(t) = \frac{\partial p_t}{\partial n} = |f'_t(r(t))|$ and in particular we get:
\begin{align*}
r' &= |g'_t(r)| = -C_t \beta H'_{\alpha}(\beta r) = 
\lambda \beta  \frac {\beta r H_{\alpha+1 }(\beta r)}{H_{\alpha}(\beta r)} \\
&= \lambda \beta  \frac {I_{\alpha+1 }(\beta r)}{I_{\alpha}(\beta r)} = \lambda \beta  \frac {K_{\alpha+1 }(\beta r)}{ \beta r K_{\alpha}(\beta r)} 
= \lambda \beta^2  \frac {K_{\alpha+1 }(\beta r)}{ \beta K'_{\alpha+1}(\beta r)},
\end{align*}
and so we deduce that
$$\frac d {dt}  \log ( K_{\frac d2} (\beta r(t))   = \lambda \beta^2 $$
and thus
$$ K_{\frac d2} (\beta r(t) )= e^{ \lambda \beta^2 t } K_{\frac d2} (\beta r (0)).$$
Since for $\alpha>0$, $K_\alpha$ is strictly increasing and defines a bijection on $[0,+\infty[$, we have a well defined solution for \eqref{eq:mainPDE}. Uniqueness follows by Theorem \ref{maintheorem} or \cite{perthame2014hele} (since the initial density is of bounded variation).
\end{proof}

\subsection{Numerical results and comparison}
We now use the explicit solution for spherical tumor to assess the convergence of the numerical scheme when $\tau$ tends to zero. We fix an initial condition $\rho_0$ which is the indicator of a ball of radius $0.4$, we fix a final time $t_f=0.05$, and we observe the convergence towards the true solution of the continuous PDE \eqref{eq:mainPDE} when more and more intermediate time steps are taken (ie.\ as $\tau$ decreases). We perform the experiments in the 1-D and the 2-D cases and the results are displayed on Figure \ref{fig:numericball} with the following formulae:
\[
\text{rel. error on radius :} \frac{|r_\mathrm{num} -r_\mathrm{th}|}{r_\mathrm{th}} 
\quad \text{and} \quad
\text{rel. error on pressure :} \frac{\Vert p_\mathrm{num} - p_\mathrm{th} \Vert_\infty}{\Vert p_\mathrm{th} \Vert_\infty} 
\]
where the subscripts $\mathrm{``th"} $ and $\mathrm{``num"} $ refer to the theoretical and numerical computations. The theoretical pressure is compared to the numerical one on the points of the grid.

\paragraph{Dimension 1.} In the 1-D case, $\Omega = [0,1]$ is uniformly discretized into $N=4096$ cells $(W_i,x_i)=([(i-1)/N, i/N],(2i-1)/(2N))$ and $\epsilon=10^{-6}$. The results are displayed on Figure \ref{fig:numericball}(a)-(b), where the numerical radius is computed through $r_\mathrm{num} = (\sum_i \beta_i)/2N$.

\paragraph{Dimension 2.} In the 2-D case, $\Omega = [0,1]^2$ is uniformly discretized into $N^2$ sets $W_{i,j}=[(i-1)/N, i/N]\times [(j-1)/N, j/N]$ and $x_{i,j} = ((2i-1)/(2N),(2j-1)/(2N))$ with $N = 256$ and $\epsilon=2\times 10^{-5}$. Compared to the 1-D case, those parameter are less fine so that the computation can run in a few hours. The numerical radius is computed through $r_\mathrm{num}=\sqrt{\sum_i \beta_i/(\pi N^2)}$.

\paragraph{Comments}
We clearly observe the rate of convergence in $O(\tau)$ of the discretized scheme to the true solution. However, the \emph{locking} effect (mentioned in Section \ref{sss:discretization parameters}) starts being non-negligible for small values of $\tau$ in 2-D. This effect is even more visible on the 2-D pressure because the discretization is coarser. The pressure variables at $t=t_f$ for 2 different values of $\tau$ are displayed on Figure \ref{fig:pressurelocking}: the solution is more sensitive to the non-isotropy of the mesh when the time step $\tau$ is small. The use of random meshes could be useful to reduce this effect.

\begin{figure}
\centering
\subcaptionbox{Density (1d)}[.25\linewidth]{
 	\resizebox{.25\linewidth}{!}{
        \includegraphics{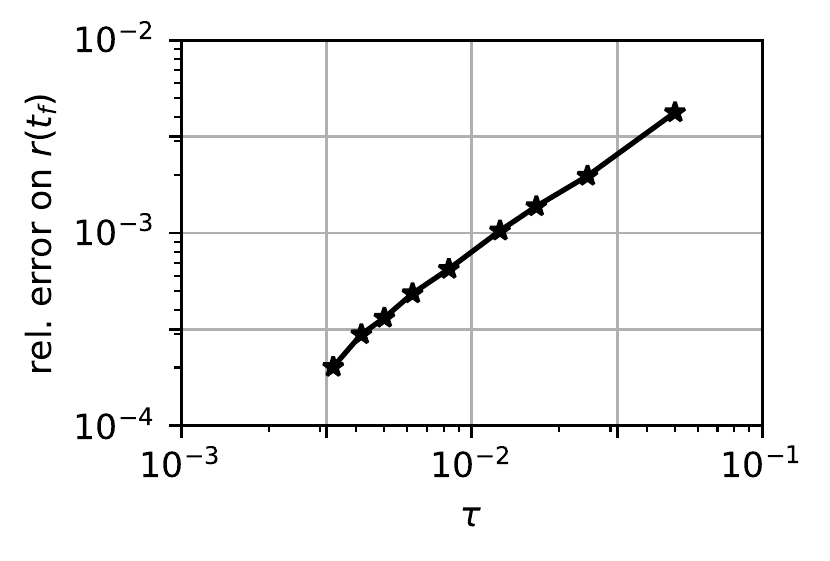}
	}}%
      \subcaptionbox{Pressure (1d)}[.25\linewidth]{
	\resizebox{.25\linewidth}{!}{
        \includegraphics[width=\textwidth]{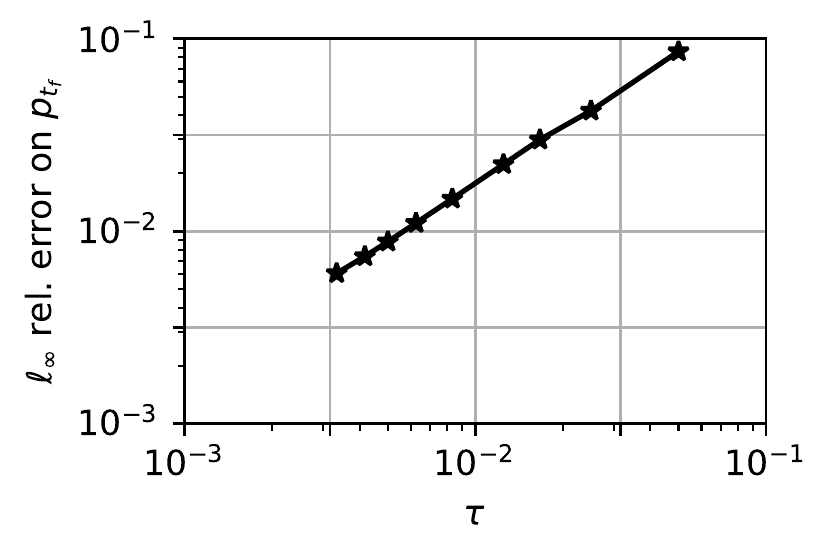}
	}}%
	\subcaptionbox{Density (2d)}[.25\linewidth]{
 	\resizebox{.25\linewidth}{!}{
        \includegraphics{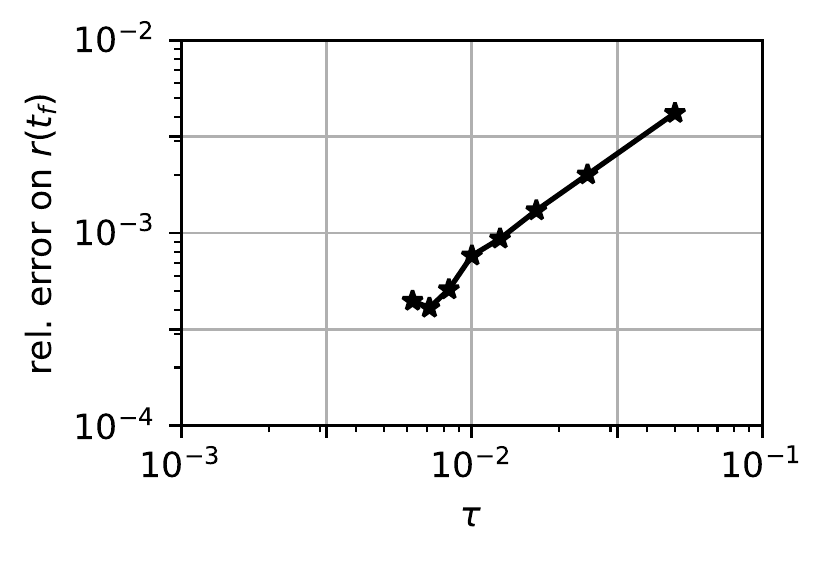}
	}}%
      \subcaptionbox{Pressure (2d)}[.25\linewidth]{
	\resizebox{.25\linewidth}{!}{
        \includegraphics[width=\textwidth]{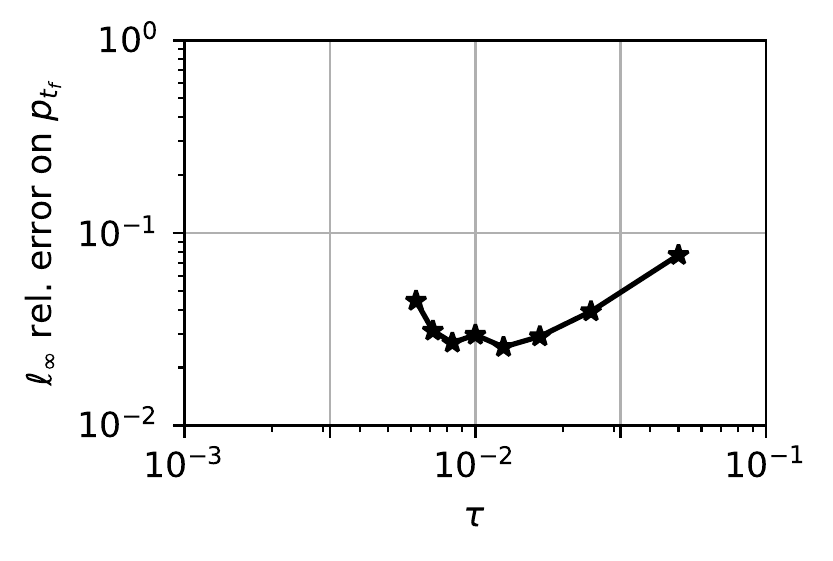}
	}}
	 \caption{For a fixed initial radius $r_0=0.4$ and final time $t_f=0.05$, we assess the convergence of the scheme as the time discretization $\tau$ tends to $0$ by comparing the computed $\rho_{t_f}$ and $p_{t_f}$ with the theoretical ones (see text body). Experiments performed on the interval $[0,1]$ discretized into $1024$ samples with $\epsilon=10^{-6}$ and on the square $[0,1]^2$ discretized into $256^2$ samples and $\epsilon=5.10^{-6}$.}
	 \label{fig:numericball}
\end{figure}

\begin{figure}
\centering
	\resizebox{.1\linewidth}{!}{
        \includegraphics[width=\textwidth]{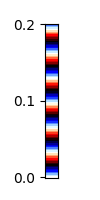}
	}%
	\subcaptionbox{theoretical (exact)}[.25\linewidth]{
 	\resizebox{.25\linewidth}{!}{
        \includegraphics{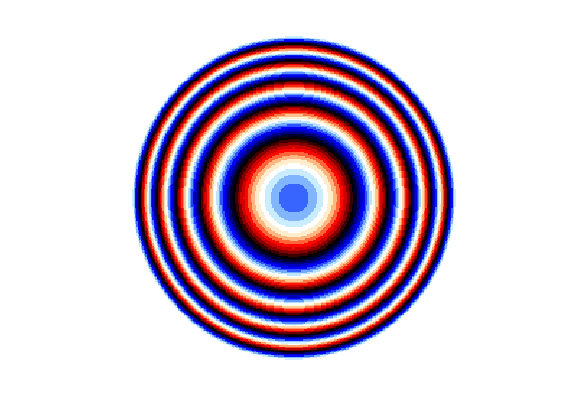}
	}}%
	\subcaptionbox{$\tau=2.5\times 10^{-2}$}[.25\linewidth]{
 	\resizebox{.25\linewidth}{!}{
        \includegraphics{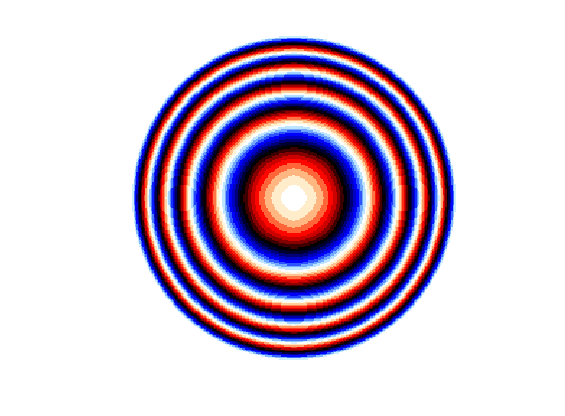}
	}}%
      \subcaptionbox{$\tau=6.25\times 10^{-3}$}[.25\linewidth]{
	\resizebox{.25\linewidth}{!}{
        \includegraphics[width=\textwidth]{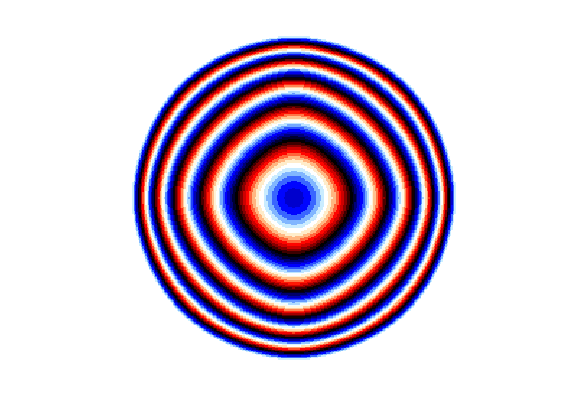}
	}}%
	 \caption{Pressure field at $t=t_f$, theoretical and numerical values for an initial density which is a ball in $[0,1]^2$. The pressure is a decreasing function of the distance to the center: here the colormap puts emphasis on the level sets so that the anisotropy due to the discretization is apparent.}
	 \label{fig:pressurelocking}
\end{figure}


\section{Illustrations}\label{sec:illustrations}
We conclude this article with a series of flows computed numerically.

\subsection{On a 1-D domain}
We consider a measure $\rho_0$ of density bounded by 1 on the domain $[0,1]$ discretized into $1024$ samples (Figure \ref{fig:flow1d}-(a), darkest shade of blue) and compute the evolution of the flow with parameters $\tau = 10^{-2}$ and $\epsilon=1e-6$. The density at every fourth step is shown with colors ranging from blue to yellow as time increases. Each density is displayed behind the previous ones, without loss of information since density is non-decreasing with time. The numerical pressure is displayed on Figure \ref{fig:flow1d}-(b).

\begin{figure}
\centering
\subcaptionbox{Density}[\linewidth]{
        \includegraphics[width=0.75\textwidth]{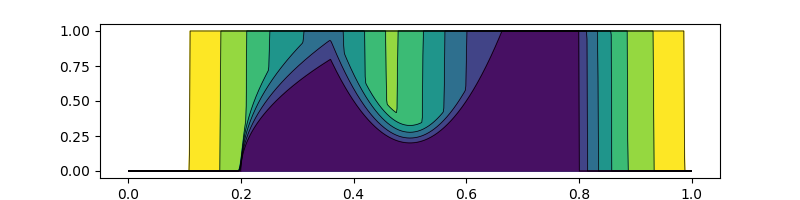}
}
      \subcaptionbox{Pressure}[\linewidth]{
        \includegraphics[width=0.75\textwidth]{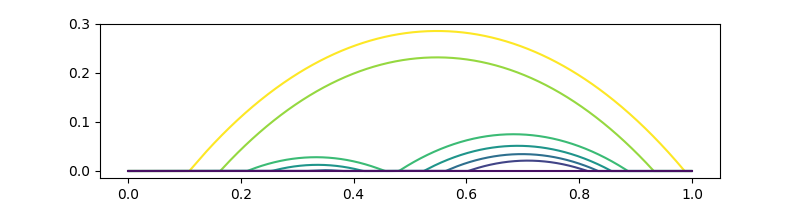}
}
	 \caption{A flow on $[0,1]$ and the associated pressure. Time shown are $t=0, 0.04, 0.08, \dots, 0.24$, color ranges from blue to yellow as time increases}
	 \label{fig:flow1d}
\end{figure}

\paragraph{Splitting scheme}
We also compare this evolution with a splitting scheme, inspired by~\cite{gallouet2016jko, gallouet2017unbalanced}, that allows for a greater freedom in the choice of the function $\Phi$ that relates the pressure to the rate of growth in \eqref{eq:mechanicalmodel}. This scheme alternates implicit steps with respect to the Wasserstein metric and the Hellinger metric and is as follows. Let $\rho_0 \in \mathcal{M}_+(\Omega)$ be such that $\rho_0 \leq 1$ and define, for $n\in \mathbb{N}$,
\[
\begin{cases}
\rho^\tau_{2n+1} = P^{W_2}(\rho^\tau_{2n})\\
\rho^\tau_{2n+2}(x) = \rho^\tau_{2n+1} /(1-\tau\Phi(p^\tau_{2n+1}(x))) \text{ for all $x\in \Omega$},
\end{cases}
\]
where $P^{W_2}$ is the projection on the set of densities bounded by $1$ for the Wasserstein distance and $p_n$ is the pressure field corresponding to the projection of $\rho^\tau_n$ (as in~\cite{de2016bv}). The degenerate functional we consider is outside of the domain of  validity of the results of~\cite{gallouet2016jko, gallouet2017unbalanced} and we do not know how to prove the convergence of this scheme for non linear $\Phi$. It is this introduced here nerely for informal comparison with the case $\Phi$ linear. 

It is rather simple to adapt Algorithm \ref{alg:mainalgo} to compute Wasserstein projection on the set of measures of density bounded by 1. 
On Figure \ref{fig:splitting1d}, we display such flows for rates of growth of the form $\Phi(p)=4(1-p)^\kappa$, for three different values of $\kappa$. For these computations, the segment $[0,1]$ is divided into $1024$ samples, $\tau=10^{-2}$, $\epsilon=10^{-6}$ and we display the density after the projection step, at the same times than on Figure \ref{fig:flow1d}. With $\kappa=1$, we should recover the same evolution than on Figure \ref{fig:flow1d}. As $\kappa$ increases, the rate of growth is smaller when the pressure is positive, as can be observed on Figures \ref{fig:splitting1d}-(a-c).

\begin{figure}
\centering
\subcaptionbox{$\kappa = 0.01$}[0.33\linewidth]{
        \includegraphics[width=0.35\textwidth]{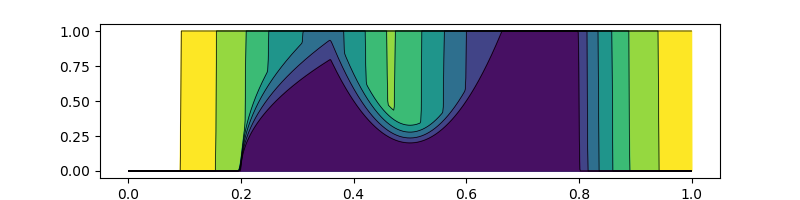}
}%
      \subcaptionbox{$\kappa = 1$}[0.33\linewidth]{
        \includegraphics[width=0.35\textwidth]{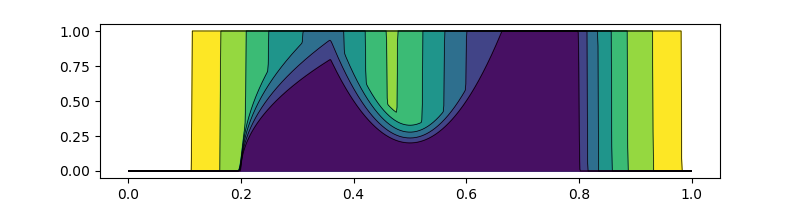}
}%
\subcaptionbox{$\kappa = 10$}[0.33\linewidth]{
        \includegraphics[width=0.35\textwidth]{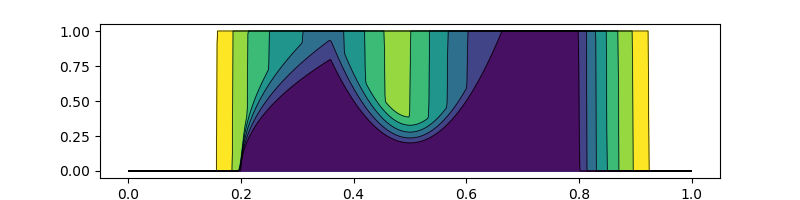}
}
	 \caption{Flow on the line with the implicit splitting scheme, with rate of growth of the form $\Phi(p)=4(1-p)^\kappa$. }
	 \label{fig:splitting1d}
\end{figure}

\subsection{On a 2d non-convex domain}
Our last illustration is performed on the square $[0,1]^2$ discretized into $256^2$ samples. The initial density $\rho_0$ is the indicator of a set and the parameters are $\tau=0.015$ and $\epsilon=5.10^{-6}$. The first row of Figure \ref{fig:flow2d} shows the flow at every 10th step (equivalent to a time interval of $0.15$). Except at its frontier (because of discretization), the density remains the indicator of a set at all time. The bottom row of Figure \ref{fig:flow2d} displays the pressure field, with a colormap that puts emphasis on the level sets. Notice that its level sets are orthogonal to the boundaries.

\begin{figure}
\centering
\begin{tikzpicture}[scale=1.0]
\pgfmathsetmacro{\ax}{2.15}
\pgfmathsetmacro{\ay}{1.5}
\pgfmathsetmacro{\b}{0.0}
\pgfmathsetmacro{\s}{0.23}
\node (1) at (0*\ax,\ay) {\includegraphics[clip,trim=\b cm \b cm \b cm \b cm, scale=\s]{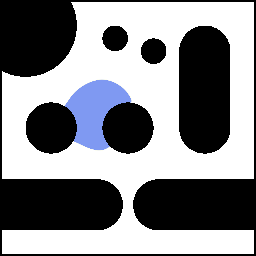}};
\node (2) at (1*\ax,\ay) {\includegraphics[clip,trim=\b cm \b cm \b cm \b cm, scale=\s]{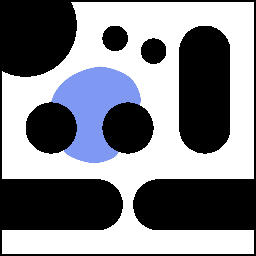}};
\node (3) at (2*\ax,\ay) {\includegraphics[clip,trim=\b cm \b cm \b cm \b cm, scale=\s]{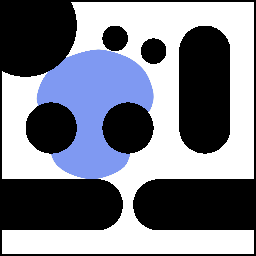}};
\node (4) at (3*\ax,\ay) {\includegraphics[clip,trim=\b cm \b cm \b cm \b cm, scale=\s]{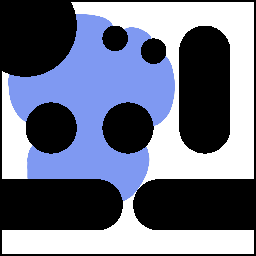}};
\node (5) at (4*\ax,\ay) {\includegraphics[clip,trim=\b cm \b cm \b cm \b cm, scale=\s]{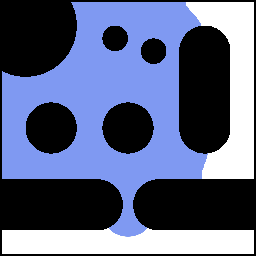}};
\node (6) at (5*\ax,\ay) {\includegraphics[clip,trim=\b cm \b cm \b cm \b cm, scale=\s]{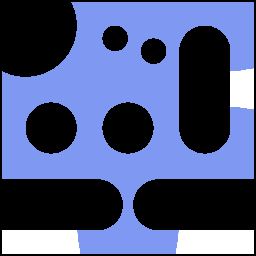}};
\node (7) at (6*\ax,\ay) {\includegraphics[clip,trim=\b cm \b cm \b cm \b cm, scale=\s]{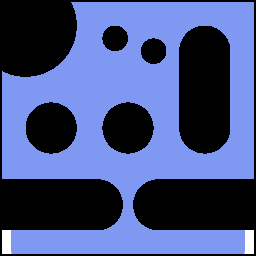}};

\node (1) at (0*\ax,-\ay+.2) {\includegraphics[clip,trim=\b cm \b cm \b cm \b cm, scale=.36]{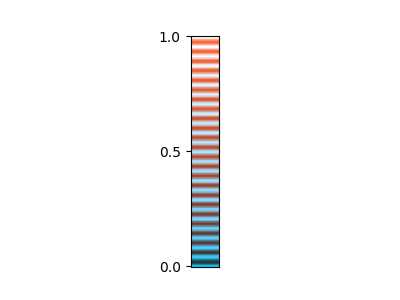}};
\node (2) at (1*\ax,-\ay) {\includegraphics[clip,trim=\b cm \b cm \b cm \b cm, scale=\s]{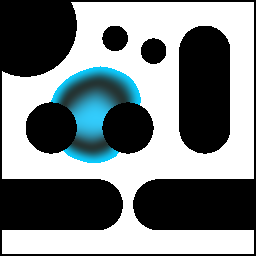}};
\node (3) at (2*\ax,-\ay) {\includegraphics[clip,trim=\b cm \b cm \b cm \b cm, scale=\s]{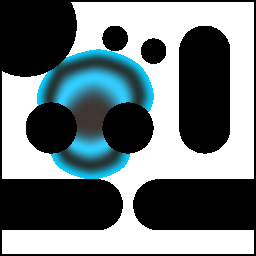}};

\node (4) at (3*\ax,-\ay) {\includegraphics[clip,trim=\b cm \b cm \b cm \b cm, scale=\s]{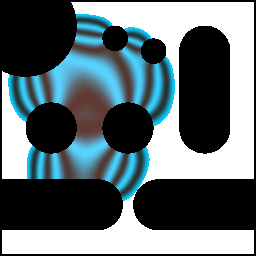}};
\node (5) at (4*\ax,-\ay) {\includegraphics[clip,trim=\b cm \b cm \b cm \b cm, scale=\s]{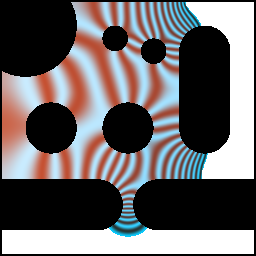}};
\node (6) at (5*\ax,-\ay) {\includegraphics[clip,trim=\b cm \b cm \b cm \b cm, scale=\s]{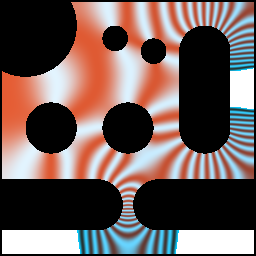}};
\node (7) at (6*\ax,-\ay) {\includegraphics[clip,trim=\b cm \b cm \b cm \b cm, scale=\s]{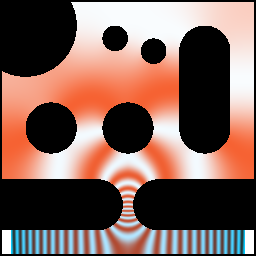}};

\draw[thick,->] 	(0,0.1)    	-- 	(6*\ax,0) node[midway,below]{$t$};

\end{tikzpicture}
	 \caption{Evolution on a non convex 2-d domain (obstacles in black). (top row) evolution of the density, the colormap is linear from white to blue as the density goes from $0$ to $1$. (bottom row) pressure represented with a striped colormap to make the level sets apparent. White area corresponds to $p=\rho=0$.}
	 \label{fig:flow2d}
\end{figure}
\appendix
\section{Appendix}
\subsection{Tools of measure theory}

\begin{lemma} Let $(X, d, \bar{\mu})$ be a metric measure space with finite measure. Let $\mu_n=f_n \bar{\mu}$ and $\mu= f \bar{\mu}$. Let us suppose that $\mu_n \weakto \mu$ and $\Ent{\mu_n}{\bar{\mu}}\to \Ent{ \mu}{\bar{\mu}} $. Moreover let us assume there exists maps $T_n : X \to \mathbb{R}^d$ such that $(Id, T_n)_{\#} \mu_n \weakto (Id, T)_{\#} \mu$, with $T$ bounded. Then we have, up to a subsequence
\begin{itemize}
\item[(i)] \label{lem:conv1} $f_n \to f$  in  $L^1(\bar{\mu})$;
\item[(ii)]  \label{lem:conv2}  $T_n(x) \to T(x)$ for $\mu$-a.e. $x \in X$.
\end{itemize}
\end{lemma}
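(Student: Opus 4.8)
The plan is to establish (i) first and then use it to obtain (ii). Throughout, write $\varphi(s)\eqdef s\log s-s+1$, so that $\Ent{\mu_n}{\bar\mu}=\int_X\varphi(f_n)\,d\bar\mu$, and recall that $\varphi\ge0$ is strictly convex, lower semicontinuous, and superlinear at infinity. For (i), I would first observe that $\Ent{\mu_n}{\bar\mu}\to\Ent{\mu}{\bar\mu}$ gives a uniform bound $\sup_n\int_X\varphi(f_n)\,d\bar\mu<\infty$; by superlinearity of $\varphi$ and the de la Vall\'ee Poussin criterion the family $\{f_n\}$ is uniformly integrable in $L^1(\bar\mu)$, and since $\bar\mu$ is finite, Dunford--Pettis yields weak $L^1(\bar\mu)$-precompactness. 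Any weak $L^1$-limit along a subsequence agrees with the narrow limit $f$ of $\mu_n=f_n\bar\mu$ when tested against $C_b(X)\subset L^\infty(\bar\mu)$, hence equals $f$ $\bar\mu$-a.e.; so, after relabelling, $f_n\rightharpoonup f$ weakly in $L^1(\bar\mu)$.

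The heart of (i) is then the standard strict-convexity/midpoint trick: set $g_n\eqdef\tfrac12\varphi(f_n)+\tfrac12\varphi(f)-\varphi\!\big(\tfrac{f_n+f}{2}\big)\ge0$. Since $u\mapsto\int_X\varphi(u)\,d\bar\mu$ is convex and strongly lower semicontinuous on $L^1(\bar\mu)$, hence weakly lower semicontinuous, and $\tfrac{f_n+f}{2}\rightharpoonup f$, one has $\liminf_n\int_X\varphi\!\big(\tfrac{f_n+f}{2}\big)\,d\bar\mu\ge\int_X\varphi(f)\,d\bar\mu$, which together with $\int_X\varphi(f_n)\,d\bar\mu\to\int_X\varphi(f)\,d\bar\mu$ forces $\int_Xg_n\,d\bar\mu\to0$. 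Thus $g_n\to0$ in $L^1(\bar\mu)$, and up to a subsequence $\bar\mu$-a.e. At any point $x$ with $f(x)<\infty$ and $g_n(x)\to0$, strict convexity of $\varphi$ rules out any \emph{finite} subsequential limit of $f_n(x)$ distinct from $f(x)$, while superlinearity of $\varphi$ rules out $f_n(x)\to+\infty$ (a short asymptotic expansion gives $g_n(x)\to+\infty$ in that case); hence $f_n\to f$ $\bar\mu$-a.e., and Vitali's theorem combined with the uniform integrability upgrades this to $f_n\to f$ in $L^1(\bar\mu)$ along that subsequence.

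For (ii), I would pass to the subsequence produced by (i) and first get rid of the varying marginals: since $f_n\to f$ in $L^1(\bar\mu)$ and the $T_n$ are bounded, for every $F\in C_b(X\times\R^d)$ one has $\big|\int_X F(x,T_n(x))\,d(\mu_n-\mu)\big|\le\|F\|_\infty\,\|f_n-f\|_{L^1(\bar\mu)}\to0$, so the hypothesis upgrades to $(\mathrm{Id},T_n)_\#\mu\weakto(\mathrm{Id},T)_\#\mu$, i.e.\ narrow convergence on $X\times\R^d$ with the \emph{common} first marginal $\mu$. I claim this forces $T_n\to T$ in $\mu$-measure. If not, there are $\delta,\varepsilon_0>0$ and a subsequence with $\mu(\{|T_n-T|>\delta\})\ge\varepsilon_0$. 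Given $\eta>0$, Lusin's theorem gives a compact $K$ with $\mu(X\setminus K)<\eta$ on which $T$ is continuous, Tietze's theorem gives $\tilde T\in C_b(X,\R^d)$ with $\tilde T=T$ on $K$, and Urysohn's lemma gives $\psi\in C_b(X)$ with $0\le\psi\le1$, $\psi\equiv1$ on $K$, and $\mu(\{\psi>0\}\setminus K)<\eta$. Testing the narrow convergence against the bounded continuous function $(x,y)\mapsto\psi(x)\big(|y-\tilde T(x)|\wedge1\big)$ shows that $\int_X\psi(x)\big(|T_n(x)-\tilde T(x)|\wedge1\big)\,d\mu$ converges to $\int_X\psi(x)\big(|T(x)-\tilde T(x)|\wedge1\big)\,d\mu\le\eta$, whereas for each $n$ the left-hand side is bounded below by $(\delta\wedge1)\,\mu\big(\{x\in K:|T_n(x)-T(x)|>\delta\}\big)\ge(\delta\wedge1)(\varepsilon_0-\eta)$; choosing $\eta$ small enough gives a contradiction. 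Hence $T_n\to T$ in $\mu$-measure, and a final extraction yields $T_n\to T$ $\mu$-a.e.

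I expect the genuine obstacle to be Part (i): turning the scalar datum $\int\varphi(f_n)\to\int\varphi(f)$ into pointwise convergence, where the only structural inputs are strict convexity (pinning finite limits) and superlinearity (forbidding escape to $+\infty$), and these must be channelled through the nonnegative midpoint quantity $g_n$ and the weak lower semicontinuity of the convex integral. Part (ii) has only the minor wrinkle that $T$ is merely measurable rather than continuous, which the Lusin/Tietze regularization absorbs; the estimate $\big|\int F(x,T_n(x))\,d(\mu_n-\mu)\big|\to0$ furnished by (i) is precisely what reduces (ii) to the fixed-marginal situation, after which the test-function argument is routine.
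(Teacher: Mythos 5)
Your proof is correct and follows the same two-step structure as the paper: first establish (i), then use it to replace the varying first marginal $\mu_n$ by the fixed marginal $\mu$ and deduce (ii). The paper simply cites Visintin's strong-convergence result for strictly convex integrands at the point where you carry out the midpoint/weak-lower-semicontinuity argument, and cites Ambrosio--Gigli--Savar\'e, Lemma 5.4.1, at the point where you carry out the Lusin/Tietze/Urysohn argument for convergence in $\mu$-measure; your write-up supplies inline proofs of exactly those two cited facts, but the underlying ideas and the order of reductions are the same.
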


\begin{proof} The first point is a well known consequence of the strict convexity of $t \log t$ (see for example \cite[Theorem 3]{visintin1984strong}) and the fact that since $f_n$ are uniformly $\mu$ integrable we have $f_n\weakto f$ in $L^1(\bar{\mu})$. For the second point it is sufficient to notice that thanks to the first point we have $(Id, T_n)_{\#} \mu \weakto (Id, T)_{\#} \mu$ and then we can apply \cite[Lemma 5.4.1]{ambrosio2008gradient} and then pass to a subsequence.
%
%
%
%
\end{proof}

\subsection{Technical lemmas}

\begin{proof}[Proof of Lemma \ref{lem:f_n}] It is easy to compute the first and second derivative of $f$ to deduce that we have also $f''(t)=2e^f \geq 2$. Now we consider and increasing sequence of points $t_n \leq \pi/2$ such that $t_n  > 1$ and $t_n \uparrow \pi/2$; then we define functions $f_n$ such that $f_n(0)=0$, $f_n'(0)=0$ and
\begin{equation}\label{eqn:def_fn}
f''_n(t) = \begin{cases} 2e^{f(t)} \qquad &\text{ if }t \leq t_n \\ e^{-t} & \text{ otherwise.} \end{cases}
\end{equation}
Since $f_n'' >0$ uniformly on bounded sets we have that $f_n$ is strictly convex; moreover it is Lipschitz since 
\begin{align*}
f_n'(t) &=f_n'(t)-f_n'(0)= \int_0^{t} f_n''(s)\, ds \\
&\leq \int_0^{\infty} f_n''(s)\, ds=\int_0^{t_n} f''(s) \,ds + \int_{t_n}^{\infty} e^{-s} \, ds \\
& \leq f'(t_n) + 1.
\end{align*}

Furthermore clearly since $t_n$ is increasing we have that $f''_n$ is an increasing sequence of functions, in fact $f''(t) > e^{-t}$. Moreover it is clear that $f_n(t)=f(t)$ for $t \in [0,t_n]$ (and in particular in $[0,1]$), and so we have $f_n \uparrow f$ in $[0, \pi/2)$ but, since $f_n$ are increasing functions in $t$, we conclude also that $f_n (t) \to +\infty$ for every $t \geq \pi/2$.

As for (ii) we denote $F(t)=f_n'(t)^2$ and $G(t)=4(e^{f_n(t)} -1)$. First we notice that $F(t)=G(t)$ for $t \in [0,t_n]$, since here $f_n$ agrees with $f$, that satisfies the differential equation; then, for $t>t_n$, we can apply the Cauchy's mean value theorem to $F$ and $G$, that are both strictly increasing and differentiable in $(t_n, \infty)$. In particular there exists $t_n<s<t$ such that
$$ \frac{ F(t) - F(t_n) }{G(t) - G(t_n)}  = \frac { F'(s)}{G'(s)}= \frac{ 2f_n'(s) f_n''(s)}{ 4 f_n'(s) e^{f_n(s)}} = \frac {f_n''(s)}{2e^{f_n(s)}} \leq e^{-s} \leq 1; $$
knowing that $F(t_n)=G(t_n)$ and that $G(t) >G(t_n)$ we get immediately that $F(t) \leq G(t)$.

For the second inequality we will use that $e^t-1 \geq t $ and $e^t-1 \geq t^2/2$. We choose $t_n$ big enough such that $f(t_n)/t_n \geq \sqrt{2}$: this is always possible since $f(t)/t \to \infty$ as $ t \uparrow \pi/2$. Then from equation \eqref{eqn:def_fn} we have $f_n''(t) \geq 2 $ for $t \leq t_n$ and in particular $f_n(t) \geq t^2$ in that region and so we get
$$ e^{f_n(t)}-1 \geq e^{t^2}-1 \geq t^2 \qquad \forall t \leq t_n,$$
while if $t \geq t_n$ by convexity we have $f_n(t) \geq \frac { f_n(t_n)}{t_n} t \geq \sqrt{2} t$ and so 
$$ e^{f_n(t)}-1 \geq e^{\sqrt{2}t}-1 \geq \frac 12 (\sqrt{2} t)^2=t^2 \qquad \forall t \geq t_n,$$
concluding thus the proof.
 \end{proof}
 
 \begin{lemma} \label{lem:mass} Let us consider $\mu_1, \mu_2$ two measures in $\Omega$ and a Borel cost $c \geq 0$. It holds
$$ T_c(\mu_1,\mu_2) \geq \Bigl(\sqrt{ \mu_1(\Omega) } - \sqrt{ \mu_2(\Omega) }\Bigr)^2  .$$
\end{lemma}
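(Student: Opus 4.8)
The plan is to throw away the cost term---which is harmless since $c\geq0$---and reduce the statement to a one-variable minimization over the common total mass of the two marginals.

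First I would use $\int c\,d\gamma\geq 0$ to write $T_c(\mu_1,\mu_2)\geq\inf_{\gamma\in\mathcal{M}_+(\Omega^2)}\{\Ent{\gamma_1}{\mu_1}+\Ent{\gamma_2}{\mu_2}\}$, and observe that any $\gamma\in\mathcal{M}_+(\Omega^2)$ has marginals of equal total mass $b:=\gamma(\Omega^2)=\gamma_1(\Omega)=\gamma_2(\Omega)$. Hence it suffices to bound $\Ent{\sigma_1}{\mu_1}+\Ent{\sigma_2}{\mu_2}$ from below over all pairs $(\sigma_1,\sigma_2)\in\mathcal{M}_+(\Omega)^2$ with $\sigma_1(\Omega)=\sigma_2(\Omega)=b$, and then take the infimum over $b\geq0$.

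Next, for fixed $b$, I would assume $\sigma_i\ll\mu_i$ (otherwise $\Ent{\sigma_i}{\mu_i}=+\infty$ and there is nothing to prove), set $m_i:=\mu_i(\Omega)$, write $\sigma_i=s_i\mu_i$, and apply Jensen's inequality to the convex function $\theta(t)=t\log t-t+1$ against the probability measure $\mu_i/m_i$ to get
$$
\Ent{\sigma_i}{\mu_i}=\int_\Omega\theta(s_i)\,d\mu_i\;\geq\;m_i\,\theta\!\left(\tfrac{b}{m_i}\right)=b\log\tfrac{b}{m_i}-b+m_i .
$$
(If some $m_i=0$, i.e. $\mu_i=0$, then finiteness of the entropy forces $\sigma_i=0$, hence $b=0$ and the estimate is immediate.) Summing over $i=1,2$ yields, with the convention $0\log0=0$,
$$
\Ent{\sigma_1}{\mu_1}+\Ent{\sigma_2}{\mu_2}\;\geq\;g(b):=2b\log b-b\log(m_1m_2)-2b+m_1+m_2 .
$$
Since $g'(b)=\log\!\big(b^2/(m_1m_2)\big)$, the function $g$ attains its minimum on $[0,\infty)$ at $b_\star=\sqrt{m_1m_2}$, where the first two terms cancel and $g(b_\star)=m_1+m_2-2\sqrt{m_1m_2}=(\sqrt{m_1}-\sqrt{m_2})^2$. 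Chaining the three reductions gives $T_c(\mu_1,\mu_2)\geq\big(\sqrt{\mu_1(\Omega)}-\sqrt{\mu_2(\Omega)}\big)^2$.

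I do not expect a genuine obstacle here: the only mildly delicate points are the passage from arbitrary densities $s_i$ to constant ones via Jensen (which is precisely the convexity of $\theta$) and the handling of the trivial degenerate case where one of the $\mu_i$ vanishes; the remaining computation of $\min_b g(b)$ is elementary.
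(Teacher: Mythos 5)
Your proof is correct and follows essentially the same route as the paper: drop the cost term using $c\geq 0$, reduce to minimizing the sum of entropies over couplings of a fixed common mass, apply Jensen's inequality to $t\mapsto t\log t - t + 1$ against the normalized reference measures, and minimize the resulting one-variable function at $b_\star=\sqrt{m_1 m_2}$. The only cosmetic difference is that you explicitly handle the degenerate case $\mu_i=0$, which the paper leaves implicit.
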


\begin{proof} In the sequel, we denote $\mu_i(\Omega)=m_i$. It is clear that we can suppose $c =0$ (in fact $T_{c'} \leq T_{c}$ whenever $c' \leq c$) and write our problem as
\begin{align*}
T_0(\mu_1, \mu_2) &= \min_{\gamma \in \mathcal{M}_+( \Omega \times \Omega )} \left\{ \Ent { \gamma_1}{ \mu_1} + \Ent{ \gamma_2} {\mu_2}  \right\} \\
& = \min_{M \geq 0} \min_{\gamma \in \mathcal{M}_+( \Omega \times \Omega )} \left\{ \Ent { \gamma_1}{ \mu_1} + \Ent{ \gamma_2} {\mu_2} \; : \;  m(\gamma)=M \right\}\, .
\end{align*}
We can restrict ourselves to the case $\gamma_i \ll \mu_i$, where we have $\gamma_i= \sigma_i \mu_i$ and using Jensen inequality applied to $E(t) \eqdef t \ln t -t +1$ it holds
\begin{align*} \Ent{\gamma_i}{\mu_i} &= m_i \int_{\R^d} E( \sigma_i ) \, d \frac {\mu_i}{m_i} \geq m_i E\left( \int_{\R^d} \sigma_i d \frac {\mu_i}{m_i} \right) \\ &= m_i E \left( \frac M{m_i} \right) = M \ln \left( M/ m_i \right)  - M + m_i,\end{align*}
 with equality if we choose $\gamma_i = \frac M{m_i}\mu_i $ and $\gamma= \gamma_1 \otimes \gamma_2$. In particular, we have
$$
T_0 (\mu_1, \mu_2) = \min_{M \geq 0}  \left\{ M \ln \left( \frac {M^2}{m_1m_2} \right) + m_1+m_2 -2M \right\}\, ,
$$
the minimizer is $M= \sqrt{ m_1 m_2}$, so
$T_c(\mu_1, \mu_2) \geq T_0(\mu_1, \mu_2) =( \sqrt{ m_1} - \sqrt{ m_2} )^2$.
\end{proof}

 \subsection{Explicit form of geodesics and convexity}
 
 \begin{theorem}\label{thm:rep} Let us consider two absolutely continuous measures $\mu_0$ and $\mu_1$. Then, given $\phi$ an optimal potential for the problem $T_{c_l} (\mu_0, \mu_1)$ we consider the quantities 
 $$\begin{cases} \alpha_t(x) & = (1-\phi(x) t)^2 + \frac{t^2 | \nabla \phi(x)|^2}4 \\ X_t(x) &= x- {\rm arctan } \bigl( \frac { t|\nabla \phi (x)| } {2-2t\phi(x)}\bigr) \frac { \nabla \phi (x) } {| \nabla \phi (x) |}.\end{cases}$$
 Then we have that $\mu_t =(X_t)_{\#} ( \alpha_t \mu_0)$ is the geodesic for $\dist$ between $\mu_0$ and $\mu_1$.
 \end{theorem}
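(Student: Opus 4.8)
The plan is to reduce the statement to the dynamic (Benamou--Brenier type) formulation \eqref{eq:WFRgeodesic} of $\dist^2=T_{\clog}$. Since that formula defines $\dist^2(\mu_0,\mu_1)$ as the \emph{minimum} of the action $\int_0^1(\Vert v_t\Vert^2_{L^2(\rho_t)}+\tfrac14\Vert \alpha_t\Vert^2_{L^2(\rho_t)})dt$ over curves joining $\mu_0$ to $\mu_1$, it suffices to (a) check that the proposed curve $(\mu_t)_{t\in[0,1]}$ really joins $\mu_0$ to $\mu_1$, (b) exhibit admissible velocity/growth fields for it, and (c) show that their total action equals $T_{\clog}(\mu_0,\mu_1)$. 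Then $(\mu_t)$ is a minimal-action curve, hence a constant-speed geodesic; uniqueness will come for free from the uniqueness and deterministic structure of the optimal plan.

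\textbf{Identifying the plan and checking the endpoints.} Because $f_\ell$ is strictly convex on $[0,\pi/2)$ (Lemma~\ref{lem:f_n}) and $\mu_0$ is absolutely continuous, $\clog$ satisfies the twist condition, so the optimal plan $\gamma$ for $T_{\clog}(\mu_0,\mu_1)$ is induced by a map. Using the compatibility condition $\nabla\alpha(x)=\partial_x \clog(x,y)$ for $\gamma$-a.e.\ $(x,y)$ (Theorem~\ref{th:duality}(iii)), the identity $f_\ell'=2\tan$ on $[0,\pi/2)$ (Lemma~\ref{lem:f_n}(i)), and $\nabla\phi=(1-\phi)\nabla\alpha$ coming from $\phi=1-e^{-\alpha}$, one solves for $y$ and finds $y=x-\arctan\!\big(\tfrac{|\nabla\phi(x)|}{2(1-\phi(x))}\big)\tfrac{\nabla\phi(x)}{|\nabla\phi(x)|}=X_1(x)$, so $\gamma=(\id,X_1)_\#\big((1-\phi)\mu_0\big)$. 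Combining $(1-\phi(x))(1-\psi(y))=e^{-\clog(x,y)}$ on $\spt\gamma$ (where $\psi=1-e^{-\beta}$) with $\cos^2(\arctan s)=(1+s^2)^{-1}$ yields $\alpha_1(x)=(1-\phi(x))^2\,e^{\clog(x,X_1(x))}=(1-\phi(x))\,e^{\beta(X_1(x))}$, whence $(X_1)_\#(\alpha_1\mu_0)=e^{\beta}\,(X_1)_\#\big((1-\phi)\mu_0\big)=e^{\beta}\gamma_2=e^{\beta}e^{-\beta}\mu_1=\mu_1$ by Theorem~\ref{th:duality}(ii). Since $X_0=\id$ and $\alpha_0\equiv 1$ give $\mu_0$ at $t=0$, the curve joins $\mu_0$ and $\mu_1$; note also that $1-t\phi(x)=(1-t)+t\,e^{-\alpha(x)}>0$ for $t\in[0,1]$, so $\alpha_t>0$ and $X_t$ is well defined throughout.

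\textbf{Fields and action via the metric cone.} Differentiating $t\mapsto\int f\,d\mu_t=\int f(X_t(x))\,\alpha_t(x)\,d\mu_0(x)$ for $f\in C^1_c$ shows that $(\mu_t)$ solves $\partial_t\mu_t=-\nabla\cdot(\mu_t v_t)+g_t\mu_t$ weakly, with (barycentric) fields satisfying $v_t(X_t(x))=\partial_t X_t(x)$ and $g_t(X_t(x))=\partial_t\alpha_t(x)/\alpha_t(x)$ for $\mu_0$-a.e.\ $x$; by Jensen's inequality the action of $(\mu_t)$ is at most the Lagrangian quantity
\[
\mathcal A:=\int_0^1\!\!\int_{\R^d}\Big(|\partial_t X_t(x)|^2+\tfrac14\big(\partial_t\alpha_t(x)/\alpha_t(x)\big)^2\Big)\alpha_t(x)\,d\mu_0(x)\,dt.
\]
For fixed $x$, the curve $t\mapsto(X_t(x),\sqrt{\alpha_t(x)})$ is a constant-speed geodesic in the metric cone over $(\R^d,|\cdot|\wedge\tfrac\pi2)$ (a direct computation of the cone speed, as in \cite{liero2015geodesics}), so the inner time integral equals the squared cone distance $1+\alpha_1(x)-2\sqrt{\alpha_1(x)}\cos\big(|x-X_1(x)|\big)$; using $\cos\big(|x-X_1(x)|\big)=\cos\!\big(\arctan\tfrac{|\nabla\phi|}{2(1-\phi)}\big)=(1-\phi(x))/\sqrt{\alpha_1(x)}$ this is $1+\alpha_1(x)-2(1-\phi(x))$. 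Integrating against $\mu_0$ and using $\int(1-\phi)\,d\mu_0=\gamma(\R^{2d})$, $\int\alpha_1\,d\mu_0=\mu_1(\R^d)$ and Theorem~\ref{th:duality}(iv),
\[
\mathcal A=\mu_0(\R^d)+\mu_1(\R^d)-2\gamma(\R^{2d})=T_{\clog}(\mu_0,\mu_1)=\dist^2(\mu_0,\mu_1).
\]
Since $\dist^2(\mu_0,\mu_1)\le\text{action}(\mu_t)\le\mathcal A=\dist^2(\mu_0,\mu_1)$, all inequalities are equalities, so $(\mu_t)$ is a minimal-action, hence constant-speed, curve from $\mu_0$ to $\mu_1$: a geodesic for $\dist$. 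Uniqueness of this geodesic follows from the uniqueness of the optimal plan $\gamma$, which pins down $(X_1,\alpha_1)$, together with the uniqueness of cone geodesics.

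\textbf{Main obstacle.} The delicate point is the cone step: verifying cleanly that $t\mapsto(X_t(x),\sqrt{\alpha_t(x)})$ has constant cone speed and realizes the cone distance, and fixing the normalization of the cone metric so that the angular truncation at $\pi/2$ inherited from $\clog$ and the factor $\tfrac14$ in the Fisher--Rao term are matched (this is exactly the content borrowed from \cite{liero2015geodesics,liero2015optimal}). One must also treat the degenerate set $\{\nabla\phi=0\}$, where $X_t(x)=x$ and only radial (Fisher--Rao) motion survives --- positivity of $\alpha_t$ and the closed-form evaluation above still apply --- and, when the ambient set $\Omega$ is relevant, check that the geodesics stay inside $\Omega$, which holds for $\Omega$ convex, the case where this representation is used. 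A minor technical point, standard but worth stating, is the Lagrangian-to-Eulerian passage for the action via the barycentric projection, which does not require injectivity of $X_t$ (for the optimal $\phi$ the map $X_t$ is in fact of monotone type on $[0,1]$ and injective).
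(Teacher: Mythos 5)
Your proof is correct and the computations check out (I verified the algebra in the endpoint identification, the identity $\alpha_1=(1-\phi)^2 e^{\clog(x,X_1(x))}=(1-\phi)e^{\beta\circ X_1}$, the expansion showing $\alpha_t(x)$ matches the radial profile of a constant-speed cone geodesic with $r_0=1$, $r_1=\sqrt{\alpha_1}$, and the final book-keeping $\mathcal A=\mu_0(\R^d)+\mu_1(\R^d)-2\gamma(\R^{2d})=T_{\clog}(\mu_0,\mu_1)$ via Theorem~\ref{th:duality}(iv)). The route differs from the paper's in emphasis: the paper gives only a two-line ``correct proof'' that lifts $\phi$ to the optimal cone potential $\phi(x)r^2$ on $\mathcal M_+(\mathfrak C(\R^d))$, observes that $Y_t(x,r)=(r\sqrt{\alpha_t(x)},X_t(x))$ is a Wasserstein geodesic on the cone, and invokes the projection $W_2(\nu_0,\nu_1)=\dist(\mathfrak P\nu_0,\mathfrak P\nu_1)$ from Liero--Mielke--Savar\'e. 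You instead argue directly through the dynamic Benamou--Brenier formulation \eqref{eq:WFRgeodesic}: you identify the optimal plan as a map, verify that $(X_1)_\#(\alpha_1\mu_0)=\mu_1$, construct Eulerian fields by barycentric projection, and bound the Eulerian action by the pointwise Lagrangian one, which you then evaluate exactly using the cone geometry fiberwise. The net effect is that your proof avoids invoking the non-trivial fact that optimal cone potentials project to optimal $\dist$-potentials, replacing it by the direct observation that the total Lagrangian action equals $T_{\clog}$; this is more self-contained but requires stating (and, in a fully rigorous write-up, proving) the Jensen step for the Lagrangian-to-Eulerian passage, which the paper's cone argument bypasses. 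Both approaches borrow the cone geodesic formula from \cite{liero2015optimal,liero2015geodesics}. Two small points you correctly flag: the cone over $(\R^d,|\cdot|\wedge\pi/2)$ should really be the standard cone over $(\R^d,|\cdot|\wedge\pi)$ with the understanding that the optimal plan charges only pairs with $|x-y|<\pi/2$ (mass at greater distance is created or destroyed at the apex), and the uniqueness claim at the end establishes uniqueness within the class of cone-lifted curves rather than among all constant-speed curves, but the theorem only asserts ``the geodesic'' in the sense of the representation formula and this suffices for its use in Lemmas~\ref{lem:derivativemut}--\ref{lem:appendix}.
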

 
 We first perform a formal proof starting from the geodesics equations, that can be useful in other cases, when the explicit form of the geodesics is not known. This will be followed by the correct proof, that uses the cone construction introduced \cite{liero2015optimal} in order to justify everything without the need to take more than one derivative of the potential.
 
 \begin{proof}[Formal proof] Let us consider the equation of the geodesic:
 $$ \begin{cases} \partial_t \mu_t  + \nabla \cdot ( \nabla \xi_t \mu_t ) & = 4 \xi_t \mu_t  \\ \partial_t \xi_t + \frac 12 | \nabla \xi_t |^2 + 2 \xi_t^2 & = 0. \end{cases} $$
 We know that, letting $X_t$ be the flow of $\nabla \xi_t$ we have that a possible solution to the first equation is $\mu_t = (X_t)_{\#} ( \alpha_t \mu_0)$, where $\alpha_t = e^{ \int_0^r 4 \xi_r (X_r) \, dr} $ (by direct computation). So we want to solve
 $$ \begin{cases} \frac d{dt} X_t (x) &= \nabla \xi_t (X_t(x))  \\ X_0(x)&=x. \end{cases} $$
 Next we compute formally $\frac d{dt} \xi_t (X_t(x)) $ and $\frac d{dt} \nabla \xi_t (X_t(x)) $:
 \begin{align*} \frac d{dt} \xi_t (X_t(x)) &= - \frac 12 | \nabla \xi_t|^2 (X_t) - 2 \xi_t^2 (X_t) + \nabla \xi_t \cdot \frac d{dt} X_t(x) \\
 & = \frac 12 | \nabla \xi_t|^2 (X_t) - 2 \xi_t^2 (X_t);
 \end{align*}
 \begin{align*} \frac d{dt} \nabla \xi_t (X_t(x)) &= - D^2 \xi_t \cdot \nabla \xi_t (X_t) - 4 \xi_t \nabla \xi_t (X_t) + D^2 \xi_t \cdot \frac d{dt} X_t(x) \\
 & = -4 \xi_t(X_t) \nabla \xi_t(X_t).
 \end{align*}
 From the second equation we get that $\nabla \xi_t (X_t) = \nabla \xi_0 (x) /\alpha_t$. Furthermore, denoting $\gamma_t = \xi_t (X_t(x))$ we can write a system of differential equations for $\alpha_t$ and $\gamma_t$:
 $$ \begin{cases} \dot{ \gamma_t } &= \frac { | \nabla \xi_0 (x)|^2 }{2 \alpha_t^2} - 2\gamma_t^2 \\
 \dot{\alpha_t} & = 4 \gamma_t \alpha_t.
 \end{cases}$$
We can now substitute $\gamma_t = \dot{ \alpha}_t / 4\alpha_t$, getting
$$\frac{ \ddot{ \alpha }_t}{4 \alpha_t} - \frac{ \dot{ \alpha }^2_t}{4 \alpha^2_t} =  \dot{ \gamma_t } = \frac { | \nabla \xi_0 (x)|^2 }{2 \alpha_t^2} - \frac{\dot{ \alpha }^2_t}{8 \alpha^2_t}$$
$$ 2 \alpha_t \ddot{\alpha}_t = 4 | \nabla \xi_0(x)|^2 + \dot{\alpha}_t^2.$$
 Now it is easy to see that the solution to the last equation is a quadratic polynomial $\alpha_t = at^2+bt + c$. We know that $c= \alpha_0=1$, while $b= \dot{\alpha}_0 = 4 \xi_0 (x)$. The equation gives then $4a= b^2+ 4 | \nabla xi_0(x)|^2$ and thus $a= |\nabla \xi_0(x)|^2 + 4 | \xi_0(x)|^2$. Concluding we get precisely
 $$ \alpha_t (x)= (1+2t\xi_0(x))^2 + ( t |\nabla \xi_0 (x)|)^2$$ 
 $$ X_t (x) = x+ \nabla \xi_0 (x) \int_0^t \frac 1{\alpha_r(x)} \, dr = x+  \frac{\nabla \xi_0 (x)}{|\nabla \xi_0|(x)} {\rm arctan} \left( \frac{t |\nabla \xi_0 (x)|}{1+2t\xi_0(x)} \right). $$
Now we simply use the fact that $\phi(x) = - 2\xi_0 (x)$ is a good potential to conclude.
 \end{proof}

 \begin{proof} Let us consider $Y_t(x,r) = ( r \sqrt{ \alpha_t(x) }, X_t(x) )$: then $Y_t$ are geodesics in the cone. By the cone construction to have that if $\mu_0, \mu_1$ are two measures on $\R^d$ and $\phi$ is an optimal potential for $T_c(\mu_0, \mu_1)$ we have that $\phi(x)r^2$ is an optimal potential for $\nu_0 (x,r)= \mu_0(x) f(r), \nu_1= (Y_1)_{\#} (\nu_0)$ for any $f$ such that $\int f(r)r^2 \, dr=1$. Notice that $\mathfrak{P} \nu_0= \mu_0$, $\mathfrak{P} \nu_1= \mu_1$ and moreover $W_2(\nu_0, \nu_1) = \dist (\mu_0, \mu_1)$; in particular since $\nu_t= (Y_t)_{\#} (\nu_0)$ is a geodesic for $W_2$ on the cone, we will have that $\mu_t=\mathfrak{P} \nu_t $ is the geodesic for $\dist$.
 \end{proof}
 
 \begin{lemma}\label{lem:derivativemut} Let $\mu_0, \mu_1$ be two absolutely continuous measures on $\R^d$  such that $\mu_0, \mu_1 \leq 1$ and let us consider $f \in H^1(\R^d)$. Then, if we consider $\mu_t$ the geodesic between $\mu_0$ and $\mu_1$, we have 
 $$ \frac { d}{dt} \Bigg|_{t=0}\int_{\R^d} f \, d \mu_t   = - \int_{\R^d} (2 f \phi + \frac 12 \nabla f \cdot \nabla \phi ) \, d \mu_0 $$
 \end{lemma}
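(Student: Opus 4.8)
The plan is to read off the answer from the explicit geodesic of Theorem~\ref{thm:rep}. With $X_t,\alpha_t$ as there and $\phi$ the optimal (Lipschitz) potential for $T_{\clog}(\mu_0,\mu_1)$, we have $\mu_t=(X_t)_\#(\alpha_t\mu_0)$, so $\int_{\R^d}f\,d\mu_t=\int_{\R^d}f(X_t(x))\,\alpha_t(x)\,d\mu_0(x)$. For fixed $x$ the curve $t\mapsto(X_t(x),\alpha_t(x))$ is smooth near $t=0$, with $\alpha_0(x)=1$, $X_0(x)=x$, and a direct computation gives $\frac{d}{dt}\big|_{t=0}\alpha_t(x)=-2\phi(x)$ and $\frac{d}{dt}\big|_{t=0}X_t(x)=-\tfrac12\nabla\phi(x)$ (using $\arctan'(0)=1$ and $\frac{d}{dt}\big|_{t=0}\tfrac{t|\nabla\phi(x)|}{2-2t\phi(x)}=\tfrac12|\nabla\phi(x)|$; at points where $\nabla\phi(x)=0$ one has $X_t(x)\equiv x$). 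The claimed identity is then precisely what differentiating under the integral sign yields, since $\alpha_0\equiv1$.

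First I would establish this for $f\in C^1_c(\R^d)$. For such $f$ the integrand $t\mapsto f(X_t(x))\alpha_t(x)$ is $C^1$ near $0$ for each $x$, with $t$-derivative $\nabla f(X_t(x))\cdot\dot X_t(x)\,\alpha_t(x)+f(X_t(x))\,\dot\alpha_t(x)$. Since $\phi$ is Lipschitz, $\phi$ and $\nabla\phi$ are bounded, hence $\dot X_t$, $\alpha_t$, $\dot\alpha_t$ are bounded uniformly in $x$ and in $t$ small; moreover $|X_t(x)-x|\le\tfrac\pi2$, so the integrand is supported in a fixed compact neighbourhood $K$ of $\operatorname{supp}f$ and bounded there by $C\|f\|_{C^1}$, a $\mu_0$-integrable bound. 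The same bound controls the difference quotients $\tfrac1t(f(X_t(x))\alpha_t(x)-f(x))$, so dominated convergence gives $\frac{d}{dt}\big|_{t=0}\int f\,d\mu_t=\int_{\R^d}(-\tfrac12\nabla f\cdot\nabla\phi-2f\phi)\,d\mu_0$. Crucially, this step uses only $t$-differentiability at fixed $x$, and no regularity of $X_t$ in the space variable.

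To extend to $f\in H^1(\R^d)$ I would argue by density, the key being that Lemma~\ref{lem:H-1} controls the increments of $t\mapsto\int\psi\,d\mu_t$ proportionally to $t$. Take $f_k\in C^1_c(\R^d)$ with $f_k\to f$ in $H^1$ and put $h(t)=\int f\,d\mu_t$, $h_k(t)=\int f_k\,d\mu_t$. By Theorem~\ref{thm:convexity}, $\mu_t\le 1$ for all $t\in[0,1]$ (as $\mu_0,\mu_1\le1$) and $\mu_t$ is finite, so Lemma~\ref{lem:H-1} applied to $f_k-f$ and the measures $\mu_t,\mu_0$, together with the constant speed of geodesics, gives $\big|(h_k(t)-h(t))-(h_k(0)-h(0))\big|=\big|\int(f_k-f)\,d(\mu_t-\mu_0)\big|\le 2t\,\dist(\mu_0,\mu_1)\,\|f_k-f\|_{H^1(\R^d)}$. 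Also, since $\phi,\nabla\phi$ are bounded and $\mu_0$ is finite with density $\le1$, the map $g\mapsto\int(-\tfrac12\nabla g\cdot\nabla\phi-2g\phi)\,d\mu_0$ is $H^1$-continuous, hence $h_k'(0)\to L:=\int_{\R^d}(-\tfrac12\nabla f\cdot\nabla\phi-2f\phi)\,d\mu_0$. Decomposing $h(t)-h(0)-Lt=\big[(h_k(t)-h(t))-(h_k(0)-h(0))\big]+\big[h_k(t)-h_k(0)-h_k'(0)t\big]+(h_k'(0)-L)t$, the first summand is $\le 2t\,\dist(\mu_0,\mu_1)\|f_k-f\|_{H^1}$, the last is $|h_k'(0)-L|\,t$, and the middle is $o(t)$ for each fixed $k$; choosing $k$ large and then $t$ small shows $h'(0)=L$, the desired identity.

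The main obstacle is the low regularity of the optimal potential $\phi$: the Borel map $X_t$ is in general not continuous in the space variable, so one cannot invoke a change of variables or the chain rule for $f\circ X_t$ when $f$ is only in $H^1$. The plan circumvents this by first working with $C^1_c$ test functions, where only differentiation in $t$ is used, and then transferring to $H^1$ through the soft estimates of Theorem~\ref{thm:convexity} and Lemma~\ref{lem:H-1}. A second route, closer to the proof of Theorem~\ref{thm:rep}, would be to lift to the cone $\mathfrak C$, where $\mu_t=\mathfrak P\nu_t$ with $\nu_t$ a genuine $W_2$-geodesic, and to apply the standard first-variation formula for Wasserstein geodesics to the lift of $f$; I would choose whichever is lighter to make rigorous.
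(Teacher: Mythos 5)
Your strategy (establish the formula for $f\in C^1_c$ by pointwise differentiation and dominated convergence, then pass to $f\in H^1$ by density using Lemma~\ref{lem:H-1} and the constant-speed property) is sound and is genuinely different from the paper's. The paper never reduces to smooth test functions at the level of the identity; it writes the difference quotient directly as an integral of averaged operators $\mathcal{A}_t,\mathcal{B}_t,\mathcal{C}_t$, proves they are uniformly bounded from $L^2(\R^d)$ to $L^2(\mu_0)$ via Jensen, Fubini and the $L^\infty$ geodesic bound of Theorem~\ref{thm:convexity}, and then uses density only at the very end to identify the limits of these operators. Your route replaces that operator machinery by the soft duality estimate of Lemma~\ref{lem:H-1}, which gives the Lipschitz-in-time control $|\int(f_k-f)\,d(\mu_t-\mu_0)|\le 2t\,\dist(\mu_0,\mu_1)\|f_k-f\|_{H^1}$ and makes the three-term decomposition work cleanly. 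That is a real simplification and, in itself, correct. (Note, for hygiene, that this does not create circularity: Lemma~\ref{lem:H-1} is proved via the dynamical formulation and Theorem~\ref{thm:convexity}, not via the present lemma.)

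There is however a genuine gap in the $C^1_c$ step, and the same issue recurs once more in the $H^1$ step: you assert that the optimal potential $\phi$ for $T_{\clog}(\mu_0,\mu_1)$ is Lipschitz, hence $\phi$ and $\nabla\phi$ bounded, and use this to get a uniform bound $C\|f\|_{C^1}$ dominating the difference quotients. But $\clog$ is not a Lipschitz cost (it blows up as $|x-y|\to\pi/2$), and Theorem~\ref{th:duality} and Proposition~\ref{prop:uniform} only yield Lipschitz potentials for the truncated costs $c_n$. For $\clog$ the paper obtains $\phi$ as a limit with only $(\phi,\nabla\phi)\in L^2(\mu_0)$ (Propositions~\ref{prop:stime} and~\ref{prop:increasing}(iii)); in Lemma~\ref{lem:1step} the corresponding pressure is in $H^1$, not Lipschitz. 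The paper's own proof of this lemma explicitly invokes only ``$\phi,\nabla\phi\in L^2(\mu_0)$'', never boundedness. So the domination $|f(X_t(x))\alpha_t(x)-f(x)|/t\le C\|f\|_{C^1}$ as stated does not hold. The fix is standard and preserves your architecture: since $\dot X_s\alpha_s=-\tfrac12\nabla\phi(x)$ is independent of $s$ and $\dot\alpha_s=-2\phi(1-s\phi)+\tfrac{s}{2}|\nabla\phi|^2$, for $s\le t\le\tfrac12$ the difference quotients are dominated, on the compact $\pi/2$-neighbourhood $K$ of $\operatorname{supp} f$, by a multiple of $\|f\|_{C^1}\bigl(|\nabla\phi|+|\phi|+\phi^2+|\nabla\phi|^2\bigr)\chi_K$, which is in $L^1(\mu_0)$ because $\phi,\nabla\phi\in L^2(\mu_0)$ and $\mu_0(K)<\infty$. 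Likewise, the $H^1$-continuity of $g\mapsto\int(\tfrac12\nabla g\cdot\nabla\phi+2g\phi)\,d\mu_0$ should be argued by Cauchy--Schwarz, $\|\nabla g\|_{L^2(\mu_0)}\le\|\nabla g\|_{L^2(dx)}$ (using $\mu_0\le1$), and $\|\phi\|_{L^2(\mu_0)},\|\nabla\phi\|_{L^2(\mu_0)}<\infty$, not via boundedness of $\phi$ and $\nabla\phi$. With these replacements, your proof goes through.
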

 
 \begin{proof} Using Theorem \ref{thm:rep} we can write explicitly
 $$ \int f \, d \mu_t =  \int_{\R^d} f( X_t(x)) \alpha_t(x) \, d \mu_t.$$
 Now we can use that $\frac d{dt} X_t= -\nabla \phi (x)/ 2\alpha_t$ in order to get
 $$ \frac { d}{dt} \Bigg|_{t=0}  \int f \, d \mu_t  = - \frac 12 \int_{\R^d} \nabla f (X_t) \cdot  \nabla \phi \, d \mu_0 + \int_{\R^d} f(X_t) \Bigl( \frac {t| \nabla \phi|^2}2 - 2 (1-t\phi) \phi\Bigl) \, d \mu_0. $$
 While this calculation is clear when $f \in C^{\infty}_c$ in order to make sense for $f \in H^1$ we have to consider the finite difference and integrate this inequality:
 \begin{align*} \frac{\int f(x) \, d \mu_t - \int f(x) \, d \mu_0 }t 
 &= \frac 1t \int_{\R^d} \int_0^t \frac d {dt} f(X_t(x)) \alpha_t(x) |_{t=s} \, ds \, d \mu_0    \\
 &=  \frac 1t \int_{\R^d} \int_0^t - \frac 12 \nabla f (X_s) \nabla \phi(x)  \\
 &\quad +f(X_s) \Bigl( \frac {s| \nabla \phi|^2}2 - 2 (1-s\phi) \phi\Bigl) \, ds \, d \mu_0 \\
 & = \int_{\R^d} - \frac 12 \mathcal{A}_t( \nabla f ) \cdot \nabla \phi + \mathcal{C}_t ( f ) | \nabla \phi| -2\mathcal{B}_t ( f ) \phi  \, d \mu_0,
 \end{align*}
 where we denoted by $ \mathcal{A}_t, \mathcal{B}_t,\mathcal{C}_t$ three linear operator which we will show that are acting continuously from $L^2(\R^d)$ to $L^2( \mu_0)$, thus proving the formula for $f \in H^1( \R^d)$. Explicitly we have
 $$ \mathcal{A}_t (g ) (x) = \frac 1t \int_0^t g(X_s(x)) \, ds \qquad \mathcal{B}_t (g ) (x) = \frac 1t \int_0^t g(X_s(x)) (1-s \phi(x) ) \, ds $$
 $$ \mathcal{C}_t (g ) (x) = \frac 1t \int_0^t g(X_s(x))\frac{s|\nabla \phi (x)|}2 \, dt. $$

 Notice that for $0< s \leq t<1$ we have always $1-s\phi \geq 1-t$. Now using Jensen and then Fubini we get:
 \begin{align*}
 \int |\mathcal{A}_t (g ) (x)|^2 \, d \mu_0 & \leq \frac 1t \int_0^t \int g (X_s)^2 \, d \mu_0 \, ds \\
 &\leq  \frac 1 { (1-t)^2} \cdot \frac 1t \int_0^t \int g (X_s)^2 \alpha_s \, d \mu_0  \, ds \\
 & = \frac 1 { (1-t)^2} \cdot  \frac 1t \int_0^t \int g (x)^2 \, d \mu_s \, ds
 \end{align*} 
  \begin{align*}
 \int |\mathcal{B}_t (g ) (x)|^2+ |\mathcal{C}_t(g) (x)|^2 \, d \mu_0 & \leq \frac 1t \int_0^t \int g (X_s)^2 \left((1-s \phi(x))^2 +  \frac{s^2|\nabla \phi (x)|^2}4 \right)  \, d \mu_0 \, ds \\
 &= \frac 1t \int_0^t \int g (X_s)^2 \alpha_s \, d \mu_0  \, ds \\
 & = \frac 1t \int_0^t \int g (x)^2 \, d \mu_s \, ds.
 \end{align*}
 We can thus conclude thanks to the fact that $\mu_s \leq 1$, by Theorem \ref{thm:convexity}. In particular we have $ \| \mathcal{A}_t \| \leq 1/(1-t)$ and $\| \mathcal{B}_t\| , \| \mathcal{C}_t\| \leq 1$. Now we only need to show that $\mathcal{A}_t g \to g$, $\mathcal{B}_t g \to g$, $\mathcal{C}_t g \to 0$, where all these convergences are to be considered strongly in $L^2$. Thanks to the fact that these operators are bounded it is sufficient to show that this is true for a dense set of functions. But for $g \in C^{\infty}_c$ for $s < 1/2$ we have $|g(X_s) - g(x)| \leq L \cdot  {\rm arctan}(s| \nabla \phi  (x)|)$ where $L$ is the Lipschitz constant of $g$. Then we have (using that $\phi, \nabla \phi \in L^2(\mu_0)$)
 $$ \int |\mathcal{A}_t (g)(x) - g(x)|^2 \, d \mu_0  \leq {L^2} \int {\rm arctan} ( t|\nabla \phi| )^2 \, d \mu_0 \to 0,$$
$$\int |\mathcal{B}_t (g) (x)- \mathcal{A}_t(g) (X) |^2 \, d \mu_0 \leq  \frac{\| g \|_{\infty}}4 \int t^2 \phi^2 \, d \mu_0 \to 0 $$
$$ \int |\mathcal{C}_t (g) (x) |^2 \, d \mu_0 \leq \frac{\| g \|_{\infty}}{16} \int t^2 |\nabla \phi|^2 \, d \mu_0 \to 0. $$

In particular we proved that, for every $f \in H^1(\R^d)$ we have
 \[
  \frac { d}{dt} \int_{\R^d} f \, d \mu_t  |_{t=0} = - \int_{\R^d} (\frac 12 \nabla f \cdot \nabla \phi  + 2f \phi) \, d \mu_0. \qedhere
  \]
 \end{proof}

\begin{lemma}\label{lem:appendix} Let $\mu_0, \mu_1$ be two absolutely continuous measures on $\Omega$ convex such that $\mu_0, \mu_1 \leq 1$ and let us consider $p \in H^1(\Omega)$, such that $p \geq 0$ and $p (1- \mu_0)=0$. Then, if we consider $\phi$ an optimal potential between $\mu_0$ and $\mu_1$, we have
 $$  \int_{\R^d} (2 p \phi + \frac 12 \nabla p \cdot \nabla \phi ) \, d \mu_0  \geq 0.$$
 \end{lemma}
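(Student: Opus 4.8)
The plan is to follow \cite{di2016uniqueness}: differentiate $t\mapsto\int p\,d\mu_t$ along the $\dist$-geodesic $(\mu_t)_{t\in[0,1]}$ joining $\mu_0$ to $\mu_1$, and exploit that $p$ is concentrated on the set where $\mu_0$ saturates the density constraint, together with the fact that the constraint is preserved along geodesics. Throughout, $\phi$ denotes the chosen optimal potential between $\mu_0$ and $\mu_1$ relative to $\mu_0$, the same potential that appears in Theorem~\ref{thm:rep} and Lemma~\ref{lem:derivativemut}.

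\emph{The geodesic stays in $\Omega$.} Write $\phi=1-e^{-\alpha}$ and let $\gamma$ be the optimal plan for $T_{c_\ell}(\mu_0,\mu_1)$. By the compatibility conditions of Theorem~\ref{th:duality}(iii) (using that $\mu_0$ is absolutely continuous) one has $\nabla\alpha(x)=\partial_x c(x,y)=f_\ell'(|x-y|)\frac{x-y}{|x-y|}$ for $\gamma$-a.e.\ $(x,y)$; since $\int c\,d\gamma<\infty$ forces $|x-y|<\pi/2$ $\gamma$-a.e., where $f_\ell'=2\tan>0$ (Lemma~\ref{lem:f_n}), the vector $\nabla\phi(x)=e^{-\alpha(x)}\nabla\alpha(x)$ is a positive multiple of $x-y$. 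Plugging this into the explicit geodesic formula of Theorem~\ref{thm:rep} gives $X_t(x)=x-\theta_t(x)\frac{x-y}{|x-y|}$ with $\theta_t(x)=\arctan\!\big(\tfrac{t|\nabla\phi(x)|}{2-2t\phi(x)}\big)$; since $2-2t\phi(x)=2(1-t)+2te^{-\alpha(x)}\ge 2te^{-\alpha(x)}>0$ for $t\in[0,1]$, one gets $0\le\theta_t(x)\le\theta_1(x)=|x-y|$, so $X_t(x)$ lies on the segment $[x,y]$ and hence in $\Omega$ by convexity. Thus $\mu_t$ is concentrated on $\Omega$ (and $\mu_t(\partial\Omega)=0$ since $\mu_t\le\mathcal L^d$).

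\emph{Using the constraint.} By Theorem~\ref{thm:convexity}, $\mu_t\le\mathcal L^d$ for all $t\in[0,1]$. Using $p\ge0$, $p\in L^1(\Omega)$ (as $p\in H^1(\Omega)$ and $\Omega$ is bounded), and $p(1-\mu_0)=0$ — which forces $\tfrac{d\mu_0}{d\mathcal L^d}=1$ a.e.\ on $\{p>0\}$ — we obtain for every $t$
\[
\int_\Omega p\,d\mu_t=\int_\Omega p\,\tfrac{d\mu_t}{d\mathcal L^d}\,dx\le\int_\Omega p\,dx=\int_\Omega p\,\tfrac{d\mu_0}{d\mathcal L^d}\,dx=\int_\Omega p\,d\mu_0 .
\]
Hence $g(t):=\int_\Omega p\,d\mu_t$ attains its maximum over $[0,1]$ at $t=0$, so its right derivative at $0$ satisfies $g'(0^+)\le 0$. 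On the other hand, extending $p$ to some $\tilde p\in H^1(\R^d)$ (possible since a bounded convex set is an $H^1$-extension domain, and the extension is immaterial in the integrals below because $(\mu_t)$ lives in $\Omega$), Lemma~\ref{lem:derivativemut} gives $g'(0)=-\int_\Omega\big(2p\phi+\tfrac12\nabla p\cdot\nabla\phi\big)\,d\mu_0$. Combining the two facts yields $\int_\Omega\big(2p\phi+\tfrac12\nabla p\cdot\nabla\phi\big)\,d\mu_0\ge 0$, which is the claim.

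The only genuinely non-routine point is the first step — establishing that the geodesic remains inside $\Omega$, which is precisely where convexity of $\Omega$ enters; everything else is a one-line consequence of the results already available. The degenerate situations ($\nabla\phi(x)=0$, where $X_t(x)=x$ trivially, and $\mu_0$-null sets) should be checked but cause no difficulty.
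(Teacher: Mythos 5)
Your proof is correct and follows essentially the same route as the paper's: differentiate $t\mapsto\int p\,d\mu_t$ along the $\dist$-geodesic via Lemma~\ref{lem:derivativemut}, observe that $t=0$ is a maximum of that function because $\mu_t\le 1$ stays supported in $\Omega$ and $p$ is concentrated on the saturation set of $\mu_0$, and conclude that the derivative at $0$ is nonpositive. The one place you go beyond the paper is the first step: the paper simply asserts that $\mu_t$ ``will be supported on $\Omega$ as well'' without justification, whereas you supply the correct argument — using the compatibility condition $\nabla\alpha(x)=\partial_x c(x,y)$, the identity $f_\ell'=2\tan$, and the explicit form of $X_t$ from Theorem~\ref{thm:rep} to show that $X_t(x)$ traverses the segment $[x,y]$, hence stays in $\Omega$ by convexity — which is exactly where the hypothesis that $\Omega$ is convex enters the lemma.
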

\begin{proof} Let us consider $\mu_t$ the geodesic between $\mu_0$ and $\mu_t$. We know that $\mu_t \leq 1$ by Theorem \ref{thm:convexity} and $\mu_t$ will be supported on $\Omega$ as well. In particular we have $ \int_{\Omega} p \, d \mu_t  \leq \int_{\Omega} p = \int_{\Omega} \, d \mu_0$. But then using Lemma \ref{lem:derivativemut} it is easy to conclude
\[
\int_{\R^d} (2 p \phi + \frac 12 \nabla p \cdot \nabla \phi ) \, d \mu_0 = - \frac { d}{dt} \Bigg|_{t=0}  \int p (x) \, d \mu_t \geq 0.\qedhere
\]
\end{proof}

\bibliographystyle{plain}
\bibliography{refs}

\begin{thebibliography}{10}

\bibitem{ambrosio2008gradient}
Luigi Ambrosio, Nicola Gigli, and Giuseppe Savar{\'e}.
\newblock {\em Gradient flows: in metric spaces and in the space of probability
  measures}.
\newblock Springer Science \& Business Media, 2008.

\bibitem{benamou2015iterative}
Jean-David Benamou, Guillaume Carlier, Marco Cuturi, Luca Nenna, and Gabriel
  Peyr{\'e}.
\newblock Iterative bregman projections for regularized transportation
  problems.
\newblock {\em SIAM Journal on Scientific Computing}, 37(2):A1111--A1138, 2015.

\bibitem{2017-carlier-SIMA}
G.~Carlier, V.~Duval, G.~Peyr{\'e}, and B.~Schmitzer.
\newblock Convergence of entropic schemes for optimal transport and gradient
  flows.
\newblock {\em to appear in SIAM Journal on Mathematical Analysis}, 2017.

\bibitem{chizat2015unbalanced}
L{\'e}na{\"\i}c Chizat, Gabriel Peyr{\'e}, Bernhard Schmitzer, and
  Fran{\c{c}}ois-Xavier Vialard.
\newblock Unbalanced optimal transport: geometry and kantorovich formulation.
\newblock {\em arXiv preprint arXiv:1508.05216}, 2015.

\bibitem{chizat2016scaling}
L{\'e}na{\"\i}c Chizat, Gabriel Peyr{\'e}, Bernhard Schmitzer, and
  Fran{\c{c}}ois-Xavier Vialard.
\newblock Scaling algorithms for unbalanced transport problems.
\newblock {\em arXiv preprint arXiv:1607.05816}, 2016.

\bibitem{chizat2015interpolating}
L{\'e}na{\"\i}c Chizat, Bernhard Schmitzer, Gabriel Peyr{\'e}, and
  Fran{\c{c}}ois-Xavier Vialard.
\newblock An interpolating distance between optimal transport and fisher-rao.
\newblock {\em Foundations of Computational Mathematics}, 2015.

\bibitem{CuturiSinkhorn}
M.~Cuturi.
\newblock Sinkhorn distances: Lightspeed computation of optimal transport.
\newblock In Christopher J.~C. Burges, Léon Bottou, Zoubin Ghahramani, and
  Kilian~Q. Weinberger, editors, {\em Proc. NIPS}, pages 2292--2300, 2013.

\bibitem{de2016bv}
Guido De~Philippis, Alp{\'a}r~Rich{\'a}rd M{\'e}sz{\'a}ros, Filippo
  Santambrogio, and Bozhidar Velichkov.
\newblock Bv estimates in optimal transportation and applications.
\newblock {\em Archive for Rational Mechanics and Analysis}, 219(2):829--860,
  2016.

\bibitem{di2016uniqueness}
Simone Di~Marino and Alp{\'a}r~Rich{\'a}rd M{\'e}sz{\'a}ros.
\newblock Uniqueness issues for evolution equations with density constraints.
\newblock {\em Mathematical Models and Methods in Applied Sciences},
  26(09):1761--1783, 2016.

\bibitem{figalli2010new}
Alessio Figalli and Nicola Gigli.
\newblock A new transportation distance between non-negative measures, with
  applications to gradients flows with dirichlet boundary conditions.
\newblock {\em Journal de math{\'e}matiques pures et appliqu{\'e}es},
  94(2):107--130, 2010.

\bibitem{gallouet2017unbalanced}
Thomas Gallou{\"e}t, Maxime Laborde, and Leonard Monsaingeon.
\newblock An unbalanced optimal transport splitting scheme for general
  advection-reaction-diffusion problems.
\newblock {\em arXiv preprint arXiv:1704.04541}, 2017.

\bibitem{gallouet2016jko}
Thomas Gallo{\"u}et and Leonard Monsaingeon.
\newblock A jko splitting scheme for kantorovich-fisher-rao gradient flows.
\newblock {\em arXiv preprint arXiv:1602.04457}, 2016.

\bibitem{giacomelli2001variatonal}
Lorenzo Giacomelli and Felix Otto.
\newblock Variatonal formulation for the lubrication approximation of the
  {H}ele-{S}haw flow.
\newblock {\em Calculus of Variations and Partial Differential Equations},
  13(3):377--403, 2001.

\bibitem{jko}
R.~Jordan, D.~Kinderlehrer, and F.~Otto.
\newblock The variational formulation of the {F}okker--{P}lanck equation.
\newblock {\em SIAM Journal on Mathematical Analysis}, 29(1):1--17, 1998.

\bibitem{kondratyev2015new}
Stanislav Kondratyev, L{\'e}onard Monsaingeon, and Dmitry Vorotnikov.
\newblock A new optimal transport distance on the space of finite radon
  measures.
\newblock {\em arXiv preprint arXiv:1505.07746}, 2015.

\bibitem{kondratyev2016fitness}
Stanislav Kondratyev, L{\'e}onard Monsaingeon, and Dmitry Vorotnikov.
\newblock A fitness-driven cross-diffusion system from population dynamics as a
  gradient flow.
\newblock {\em Journal of Differential Equations}, 261(5):2784--2808, 2016.

\bibitem{lemmens2012nonlinear}
Bas Lemmens and Roger Nussbaum.
\newblock {\em Nonlinear Perron-Frobenius Theory}, volume 189.
\newblock Cambridge University Press, 2012.

\bibitem{leonard2012schrodinger}
Christian L{\'e}onard.
\newblock From the schr{\"o}dinger problem to the monge--kantorovich problem.
\newblock {\em Journal of Functional Analysis}, 262(4):1879--1920, 2012.

\bibitem{liero2013gradient}
Matthias Liero and Alexander Mielke.
\newblock Gradient structures and geodesic convexity for reaction--diffusion
  systems.
\newblock {\em Phil. Trans. R. Soc. A}, 371(2005):20120346, 2013.

\bibitem{liero2017convexity}
Matthias Liero, Alexander Mielke, and Giuseppe Savar{\'e}.
\newblock On geodesic $\lambda$-convexity with respect to the
  {H}ellinger-{K}antorovich distance.
\newblock in preparation.

\bibitem{liero2015optimal}
Matthias Liero, Alexander Mielke, and Giuseppe Savar{\'e}.
\newblock Optimal entropy-transport problems and a new
  {H}ellinger-{K}antorovich distance between positive measures.
\newblock {\em arXiv preprint arXiv:1508.07941}, 2015.

\bibitem{liero2015geodesics}
Matthias Liero, Alexander Mielke, and Giuseppe Savar{\'e}.
\newblock Optimal transport in competition with reaction: the
  {H}ellinger-{K}antorovich distance and geodesic curves.
\newblock {\em arXiv preprint arXiv:1509.00068}, 2015.

\bibitem{maury2015pressureless}
Bertrand Maury and Anthony Preux.
\newblock Pressureless euler equations with maximal density constraint: a
  time-splitting scheme.
\newblock 2015.

\bibitem{maury2010macroscopic}
Bertrand Maury, Aude Roudneff-Chupin, and Filippo Santambrogio.
\newblock A macroscopic crowd motion model of gradient flow type.
\newblock {\em Mathematical Models and Methods in Applied Sciences},
  20(10):1787--1821, 2010.

\bibitem{maury2011handling}
Bertrand Maury, Aude Roudneff-Chupin, Filippo Santambrogio, and Juliette Venel.
\newblock Handling congestion in crowd motion modeling.
\newblock {\em Networks and Heterogeneous Media}, 6(3, September
  2011):485--519, 2011.

\bibitem{mellet2015hele}
Antoine Mellet, Beno{\^\i}t Perthame, and Fernando Quiros.
\newblock A {H}ele-{S}haw problem for tumor growth.
\newblock {\em arXiv preprint arXiv:1512.06995}, 2015.

\bibitem{otto1998dynamics}
Felix Otto.
\newblock Dynamics of labyrinthine pattern formation in magnetic fluids: A
  mean-field theory.
\newblock {\em Archive for Rational Mechanics and Analysis}, 141(1):63--103,
  1998.

\bibitem{perthame2014hele}
Beno{\^\i}t Perthame, Fernando Quir{\'o}s, and Juan~Luis V{\'a}zquez.
\newblock The {H}ele-{S}haw asymptotics for mechanical models of tumor growth.
\newblock {\em Archive for Rational Mechanics and Analysis}, 212(1):93--127,
  2014.

\bibitem{peyre2015entropic}
Gabriel Peyr{\'e}.
\newblock Entropic approximation of {W}asserstein gradient flows.
\newblock {\em SIAM Journal on Imaging Sciences}, 8(4):2323--2351, 2015.

\bibitem{santambrogio2017euclidean}
Filippo Santambrogio.
\newblock $\{$Euclidean, metric, and Wasserstein$\}$ gradient flows: an
  overview.
\newblock {\em Bulletin of Mathematical Sciences}, 7(1):87--154, 2017.

\bibitem{schmitzer2016stabilized}
Bernhard Schmitzer.
\newblock Stabilized sparse scaling algorithms for entropy regularized
  transport problems.
\newblock {\em arXiv preprint arXiv:1610.06519}, 2016.

\bibitem{visintin1984strong}
Augusto Visintin.
\newblock Strong convergence results related to strict convexity.
\newblock {\em Communications in Partial Differential Equations},
  9(5):439--466, 1984.

\end{thebibliography}

\end{document}